\newcommand{\rmd}{\mathrm{d}}
\newcommand{\rmD}{\mathrm{D}}
\newcommand{\fa}{\quad \text{for all } \,}
\newcommand{\N}{\mathbb{N}}
\newcommand{\ZZ}{\mathbb{Z}}
\newcommand{\R}{\mathbb{R}}
\newcommand{\C}{\mathbb{C}}
\renewcommand{\d}{\text{d}}
\newcommand{\B}{\mathcal{B}}
\newcommand{\F}{\mathcal{F}}
\renewcommand{\L}{\mathcal{L}}
\renewcommand{\S}{\mathcal{S}}
\newcommand{\X}{\mathcal{X}}
\newcommand{\Y}{\mathcal{Y}}
\newcommand{\Z}{\mathcal{Z}}
\newcommand{\tD}{\tilde\Delta}
\newcommand{\bD}{\bar\Delta}
\newcommand{\eD}{\epsilon_\Delta}
\newcommand{\tN}{\tilde\nabla}
\newcommand{\rmin}{r_{\text{min}}}
\newcommand{\rmax}{r_{\text{max}}}
\newcommand{\rmean}{r_{\text{mean}}}
\newcommand{\radial}{\text{radial}}
\newcommand{\inv}{\text{inv}}
\newcommand{\XX}{\mathfrak{X}}
\renewcommand{\SS}{\mathfrak{S}}
\newcommand{\M}{\mathfrak{M}}
\newcommand{\s}{\mathfrak{s}}
\newcommand{\fF}{\mathfrak{F}}
\renewcommand{\l}{\lambda}
\newcommand{\ol}{\overline{\l}}
\newcommand{\ul}{\underline{\l}}
\newcommand{\tl}{\tilde{\lambda}}
\newcommand{\bu}{\bar{u}}
\newcommand{\bl}{\bar{\lambda}}
\newcommand{\bphi}{\bar\varphi}
\newcommand{\Bphi}{\bm\varphi}
\newcommand{\bBphi}{\bar{\Bphi}}
\newcommand{\Bpsi}{\bm\psi}
\newcommand{\BR}{\bm{R}}
\newcommand{\Bone}{\bm{1}}
\renewcommand*\env@matrix[1][\arraystretch]{%
  \edef\arraystretch{#1}%
  \hskip -\arraycolsep
  \let\@ifnextchar\new@ifnextchar
  \array{*\c@MaxMatrixCols c}}
\DeclareMathOperator{\erfc}{erfc}
\DeclareMathOperator{\id}{id}
\DeclareMathOperator{\Id}{Id}
\theoremstyle{plain}
\newtheorem{theorem}{Theorem}[section]
\newtheorem{proposition}[theorem]{Proposition}
\newtheorem{lemma}[theorem]{Lemma}
\newtheorem{corollary}[theorem]{Corollary}
\newtheorem{conjecture}[theorem]{Conjecture}
\newtheorem{remark}[theorem]{Remark}
\newtheorem{definition}[theorem]{Definition}
\numberwithin{equation}{section}
\begin{document}

\author{ Maxime Breden\thanks{CMAP, \'Ecole Polytechnique, route de Saclay, 91120 Palaiseau, France.}~~and Maximilian Engel\thanks{
Department of Mathematics, Freie Universit{\"a}t Berlin, Arnimallee 6, 14195 Berlin, Germany.}}

\title{Computer-assisted proof of shear-induced chaos in stochastically perturbed Hopf systems}

\maketitle

\begin{abstract}
We confirm a long-standing conjecture concerning shear-induced chaos in stochastically perturbed systems exhibiting a Hopf bifurcation. The method of showing the main chaotic property, a positive 
\emph{Lyapunov exponent}, is a \emph{computer-assisted proof}. Using the recently developed theory of conditioned Lyapunov exponents on bounded domains and the modified Furstenberg-Khasminskii formula, the problem boils down to the rigorous computation of eigenfunctions of the Kolmogorov operators describing distributions of the underlying stochastic process.
\end{abstract}

{\bf Keywords:} computer-assisted proof, homotopy method, Kolmogorov operators, Lyapunov exponents, quasi-ergodic distribution, shear-induced chaos, stochastic differential equations.

{\bf Mathematics Subject Classification (2020):} 35P99, 37-04, 37M25, 37H15, 60J99

\maketitle


\section{Introduction} 
The impact of stochastic noise on the behaviour of a dynamical system is an intensely studied topic of mathematical and physical research.
Very often the mathematical analysis focusses on the statistics of trajectories for different noise realisations and does not consider the dynamical aspects of the system. In contrast, the theory of random dynamical systems as coined by the works of Ludwig Arnold and his co-workers in the 1980s and 1990s, and manifested in Arnold's book \textit{Random Dynamical Systems} \cite{a98}, compares trajectories with different initial conditions but driven by the same noise. A random dynamical system in this sense consists of a model of the time-dependent noise seen as a dynamical system $\theta$ on the probability space, and a model of the dynamics on the state space formalized as a \textit{cocycle} $\varphi$ over $\theta$.

In this framework, we can study the asymptotic behaviour of typical trajectories.  
In many situations there is a spectrum of exponential asymptotic growth rates, the \textit{Lyapunov exponents}. 
The sign of the largest (or top) Lyapunov exponent, $\Lambda_1$, determines if two nearby trajectories converge or separate from each other. If $\Lambda_1$ is negative, we typically observe the convergence of trajectories, a phenomenon labelled \textit{synchronization}.
Positivity of $\Lambda_1$ implies sensitivity of initial conditions and is thereby associated with \emph{chaotic} behaviour. The sensitivity of initial conditions means that any two trajectories starting arbitrarily close to each other will separate at a certain point of time. In other words, even the smallest error in the initial conditions leads to a considerably large error in the future. 

Transitions between synchronization and chaos have become an essential part of bifurcation theory for random dynamical systems. From an applied point of view, specific laser dynamics constitute an important example for such bifurcations. Wieczorek \cite{wieczorek2009stochastic} has conducted numerical bifurcation studies for stochastically forced laser models exploring transitions from synchronisation to chaos. He has also shown similar phenomena for coupled lasers with his co-workers in \cite{bdew14, bew11}. In the context of oceanography, stochastic Hopf bifurcation has been discussed for example in \cite{dfh08}.
Beyond such applications, there is a genuine mathematical interest and motivation for studying the stability of random systems. In dissipative as well as conservative systems proving chaotic behaviour has turned out to be a very challenging and rarely resolved problem. The classical example for this problem is the \textit{standard map}, an area-preserving mapping of the two-torus which is characterised by expansion on large regions of the state space and small islands of contraction. Positivity of the first Lyapunov exponent has not been shown analytically, even if the volumes of these critical regions of contraction tend to zero, and the difficulties have been quantified in \cite{d94}. Blumenthal, Xue and Young  have shown recently in \cite{bxy17} that adding a tiny bit of noise to the system allows for averaging arguments over a stationary measure of the induced Markov chain. As long as the stationary measure allocates just a small amount of mass to the regions of contraction, the existence of a positive Lyapunov exponent can be shown. This turns out to hold true for a large class of maps, also in dissipative systems.

We will prove a similar result for a random dynamical system induced by a stochastic differential equation in this work. The result lines up with the research program suggested in \cite{y08}, and exemplified also by other recent works~\cite{BedrossBlumPanshon2020, engellambrasmussen19_1}: Young expressed the hope that if the geometry of a random map or stochastic flow suggests a positive Lyapunov exponent, then this is actually the case. The underlying philosophy is that a system with expansions on a large enough portion of its phase space can overcome tendencies to form sinks as long as the randomness is strong enough.
This work adds evidence for this conjecture.

\subsection{The random dynamical system}
Our main example is the two-dimensional stochastic differential equation 
\begin{equation}\label{MainEq_killed}
dZ_t= f(Z_t)\rmd t
+
\sigma \, \rmd W_t \,, \quad Z_0 \in E \subseteq \R^2\,,
\end{equation}
where $Z_t=(x_t,y_t)^{T} \in \bar E \subseteq \R^2$, $W_t=\left(
W^1_t, 
W^2_t
\right)^{T}
$ is a two-dimensional standard Brownian motion and the function $f:\mathbb R^2\rightarrow \mathbb R^2$ is defined by
\[
f(Z):=
\left(
\begin{array}{ll}
\alpha & -\beta\\
\beta & \alpha
\end{array}
\right) Z- \|Z\|^2 \left(
\begin{array}{ll}
a & b\\
-b & a
\end{array}
\right)Z.
\]
The constant $\sigma \geq 0$ represents the strength of the purely additive noise and $\alpha\in\mathbb{R}$ is a parameter equal to the real part of eigenvalues of the linearisation of the vector field at $(0,0)$. The parameter $b\in\mathbb{R}$ determines \emph{shear strength} by amplitude-phase coupling, as can be seen when writing the deterministic part of~\eqref{MainEq_killed} in polar coordinates, and $a > 0$, $\beta\in \mathbb{R}$ are additional parameters.

In the absence of noise ($\sigma=0$), the differential equation \eqref{MainEq_killed} is a normal form for the supercritical Hopf bifurcation:
when $\alpha \leq 0$ the system has a globally attracting equilibrium at $(x,y) = (0,0)$ which is exponentially stable until $\alpha=0$ and, when $\alpha >0$, the system has a limit cycle at $\left\{(x,y) \in \mathbb{R}^2 \,:\, x^2 + y^2 = \alpha/a\right\}$ which is globally attracting on $\mathbb{R}^{2}\setminus{\{0\}}$.

In the case with noise ($\sigma >0$), it has been shown in \cite{DoanEngelLambRasmussen} that the solutions of \eqref{MainEq_killed} generate a random dynamical system $(\theta, \varphi)$ (see Appendix~\ref{appx-rds}), where $\theta$ is the shift over \emph{Wiener space} (see Appendix~\ref{app:RDSbySDE}) and  $\varphi$ is a cocycle over $\theta$, i.e.
\[
  \varphi(0, \omega, \cdot) \equiv \Id \quad \text{and} \quad \varphi(t+s, \omega,Z) = \varphi(t, \theta_s \omega,\varphi(s, \omega,Z)) \fa \omega \in \Omega, Z \in \R^2 \text{ and } t, s \ge 0\,.
\]
For $E =\mathbb R^2$, the stochastic system has a unique stationary density  
\begin{equation*}
p(x,y)= K_{a,\alpha,\sigma}\exp\left(\frac{2\alpha(x^2+y^2)-a(x^2+y^2)^2}{2\sigma^2}\right)\,,
\end{equation*}
where $K_{a,\alpha,\sigma} >0$ is the normalisation constant and is given by
\begin{equation*}
K_{a,\alpha,\sigma} = \frac{ 2 \sqrt{2 a} }{\sqrt{\pi}\sigma \erfc \left(- \frac{\alpha}{\sqrt{2 a \sigma^2}}   \right)}\,.
\end{equation*}

This density allows to understand the dynamics of the system by means of ergodic theory: the unique stationary measure $\rmd \rho = p(x,y) \rmd(x,y)$ gives rise to an ergodic invariant measure $\mu$ (see  Appendix~\ref{app:invmeas}) for the skew product flow $(\Theta_t)_{t \in\R^+_0}$ on $\Omega\times \mathbb R^2$, defined by
\[
\Theta_t(\omega,Z):= (\theta_t\omega, \varphi(t,\omega,Z))\,.
\] 
To analyse asymptotic stability, we study the linearisation $\Phi(t,\omega,Z):=\rmD_Z\varphi(t,\omega,Z)$. A direct computation yields that $\Phi(0,\omega,Z)=\Id$ and
\begin{equation}\label{Linearization}
\dot\Phi(t,\omega,Z)=
\rmD f(\varphi(t,\omega,Z))\Phi(t,\omega,Z)\,,
\end{equation}
where
\begin{equation*}
\rmD f(x,y) = \begin{pmatrix}
  \alpha - a y^2 - 3a x^2 -2byx & -\beta -2axy -bx^2-3by^2\\
  \beta- 2axy + by^2 + 3bx^2&  \alpha - ax^2 - 3ay^2 + 2byx
\end{pmatrix}\,.
\end{equation*}
The key observation is that $(\Theta, \Phi)$ is a linear random dynamical system, where the ergodic dynamical system $(\theta_t)_{t\in\R}$ is replaced by $(\Theta_t)_{t \in\R^+_0}$.
We know from \cite{DoanEngelLambRasmussen} that the linear system $\Phi$ defined in \eqref{Linearization} satisfies the integrability condition
\[
\sup_{0 \leq t \leq 1} \ln^+ \| \Phi(t, \omega,Z) \| \in L^1(\mu)\,.
\]
Therefore, we can apply Oseledets' Multiplicative Ergodic Theorem to obtain the Lyapunov spectrum of the linear random dynamical system $(\Theta, \Phi)$ (see Appendix~\ref{appx-LyapSpec}). In particular, the largest Lyapunov exponent $\Lambda_1$ is given by
\begin{equation*}
\Lambda_1
=
\lim_{t \to \infty}
\frac{1}{t} \ln\|\Phi(t,\omega,Z)\|\quad \hbox{for } \mu\hbox{-almost all } (\omega,Z)\in\Omega\times \mathbb R^2\,.
\end{equation*}
This exponent is the crucial measure of stability. In case $\Lambda_1$ is negative, synchronisation of trajectories can be proven, as in \cite{DoanEngelLambRasmussen}, where it has also been shown that $\Lambda_1$ is negative if $|b|\leq \kappa$, where
  \[
  \kappa:=
  a\sqrt{\frac{\pi K_{a,\alpha,\sigma} \sigma^2}{\alpha+\pi K_{a,\alpha,\sigma} \sigma^2}\left(\frac{\pi K_{a,\alpha,\sigma} \sigma^2}{\alpha+\pi K_{a,\alpha,\sigma} \sigma^2}+2\right)}.
  \]
Furthermore, DeVille et al.~\cite{dsr11} have demonstrated that $\Lambda_1 < 0$ for $\sigma \frac{a}{\alpha} \to 0$, i.e.~for sufficiently small noise.
Numerical evidence from \cite{dsr11, DoanEngelLambRasmussen, wieczorek2009stochastic} has suggested that large shear $\left|b \right|$ leads  to a positive largest Lyapunov exponent, indicating chaotic behavior. Except for the strongly simplified model in \cite{engellambrasmussen19_1} inspired by numerical experiments in \cite{ly08}, an analytical proof has so far appeared out of reach, and the following conjecture has been formulated:
\begin{conjecture}[\cite{DoanEngelLambRasmussen}] \label{conjecture}
  Consider the random dynamical system induced by the stochastic differential equation~\eqref{MainEq_killed}, and fix $a > 0$ and $\beta \in \mathbb{R}$. Then there exists a function $C: \R\times \R^+ \to \mathbb{R}^+$ such that if
  \[
  	b\geq C(\alpha, \sigma)\,,
  	\]
  then the largest Lyapunov exponent $\Lambda_1$ is positive.
\end{conjecture}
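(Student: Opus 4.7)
The strategy is to apply the modified Furstenberg--Khasminskii formula for conditioned Lyapunov exponents on a bounded domain $E$, reducing the verification $\Lambda_1 > 0$ to the rigorous computation of principal eigenfunctions of an explicit degenerate elliptic operator. I would lift the linearised cocycle \eqref{Linearization} to the projective bundle $E \times S^1$ by tracking the angle $\theta$ of a tangent vector. The projected process $(Z_t, \theta_t)$ is a diffusion whose generator has the form
\begin{equation*}
\L u = \frac{\sigma^2}{2}\Delta_Z u + f(Z)\cdot \nabla_Z u + h(Z,\theta)\, \partial_\theta u\,,
\end{equation*}
with angular drift $h$ obtained by projecting $\rmD f$ onto the circle. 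Killing the process at $\partial E$ yields a sub-Markov semigroup, and the Furstenberg--Khasminskii type identity reads
\begin{equation*}
\Lambda_1^E = \int_{E \times S^1} Q(Z,\theta)\, \psi(Z,\theta)\, \rmd Z\, \rmd\theta\,,\qquad Q(Z,\theta) = \langle e_\theta, \rmD f(Z)\, e_\theta \rangle\,,
\end{equation*}
where $e_\theta = (\cos\theta,\sin\theta)^T$ and $\psi = \phi\eta / \int \phi\eta$ is the quasi-ergodic density built from the right and left principal eigenfunctions $\phi,\eta$ of $\L$ with Dirichlet conditions on $\partial E$ and periodicity in $\theta$, both associated to the principal eigenvalue $-\lambda_1$.

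The next step is to choose $E$ (a disk whose radius covers a large fraction of the mass of the explicitly known stationary density $p$) large enough that the conditioned quantity $\Lambda_1^E$ controls the true exponent $\Lambda_1$ in the sense of the theory quoted from \cite{DoanEngelLambRasmussen, engellambrasmussen19_1}. Then I would rigorously enclose the eigenpairs $(\lambda_1, \phi)$ and $(\lambda_1, \eta)$: expand $\phi$ and $\eta$ in a tensor product basis adapted to $E$ (for instance Chebyshev in the radius and Fourier in the spatial angle) together with a Fourier basis in $\theta$, so that $\L$ and $\L^*$ become explicit matrix operators acting on a weighted sequence space. After computing an approximate eigenpair numerically, I would apply a Newton--Kantorovich / radii--polynomial fixed point argument in a Banach space of analytically decaying coefficients: construct an approximate inverse of the linearised eigenvalue equation, and verify contractivity of the resulting fixed point map on a ball of explicitly controlled radius, bounding the truncation tail by analytic regularity estimates. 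Interval arithmetic evaluation of $Q \psi$ then delivers a rigorous lower bound on $\Lambda_1^E$.

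To upgrade a single validated parameter point to the function $C(\alpha,\sigma)$ predicted by the conjecture, I would run a homotopy in $b$: having certified $\Lambda_1 > 0$ at some representative $b_0$ for a given $(\alpha,\sigma)$, I would cover a parameter interval in $b$ by overlapping patches, re-running the validated fixed point step on each patch with the previous enclosure as starting guess. Since $Q$ depends linearly on $b$, a monotonicity-like argument controlling how $\psi$ responds to $b$ would, ideally, let me propagate positivity to all $b \geq b_0$; the same scheme can be repeated on a grid of $(\alpha,\sigma)$ to sketch the level set defining $C$. The main obstacle will be the computer-assisted core in the large-shear regime: as $|b|$ grows, the density $\psi$ concentrates sharply in $\theta$ because tangent vectors are rotated rapidly toward the dominant expanding direction, forcing the Fourier truncation in $\theta$ to retain many modes while keeping the Newton--Kantorovich contraction valid. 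Balancing truncation error, interval width, and the regularising effect of the noise while integrating the oscillatory function $Q\psi$ is where the bulk of the technical work, and the risk of the argument, lies.
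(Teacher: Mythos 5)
You are attempting to prove a statement that the paper itself does not prove: this is Conjecture~\ref{conjecture}, which remains open. The paper only establishes the much weaker Theorem~\ref{thm:main} --- positivity of the \emph{conditioned} Lyapunov exponent $\Lambda_c$ on a fixed annulus, for one specific choice of $(\alpha,\beta,a,b,\sigma)$ --- and explicitly presents this as ``strong evidence'' for the conjecture, not a proof. Your proposal reproduces the paper's actual strategy for that weaker result (quasi-ergodic distribution, modified Furstenberg--Khasminskii formula, rigorous eigenpair enclosure via a Newton--Kantorovich argument), but the two steps you add to upgrade it to the conjecture are precisely where the genuine gaps lie.

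First, the passage from $\Lambda_1^E>0$ to $\Lambda_1>0$ is unjustified. The cited theory gives only qualitative convergence $\Lambda_c\to\Lambda_1$ as $E\to\R^2$, with no rate and no inequality between $\Lambda_c$ on a \emph{fixed} bounded domain and the true exponent; choosing $E$ to ``cover a large fraction of the mass of $p$'' does not produce a quantitative error bound, and any computer-assisted enclosure necessarily lives on a fixed finite domain. (On the annular domains actually used, the limit $\rmin\to 0$ even changes the model, since absorption at the origin is not congruent with the full $\R^2$ dynamics.) Second, the propagation in parameters is not a proof sketch but a wish: there is no monotonicity of the quasi-ergodic density $\psi$, or of $\Lambda_c$, in $b$, so certifying positivity at $b_0$ says nothing about all $b\geq b_0$; and a grid of validated computations over $(\alpha,\sigma)$ can only ever cover a compact parameter set, whereas the conjecture asserts the existence of $C$ on all of $\R\times\R^+$. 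These are exactly the obstructions that keep the conjecture open, so the proposal, while a faithful description of the paper's computer-assisted machinery for $\Lambda_c$, does not constitute a proof of the stated result.
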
\label{conj:R2}
\subsection{The main result} \label{sec:mainresult}
In this paper, we restrict to a bounded domain $E \subset \R^2$, where we consider the \emph{conditioned Lyapunov exponent} $\Lambda_c$, giving an approximation of $\Lambda_1$ but also having crucial dynamical significance in its own right. For this situation, we can prove a weaker version of Conjecture~\ref{conjecture}, by finding values of $b$, given the other parameters, such that the Lyapunov exponent $\Lambda_c$ is positive.

The main modification is to consider trajectories of the SDE starting inside the interior of $E \subset \mathbb{R}^d$ conditioned on the fact that they do not reach the boundary $\partial E$. In other words, the boundary $\partial E$ constitutes a trap, reached at the \emph{hitting} or \emph{absorption} time
$$ T = \inf \{t \geq0 \,:\, X_t \in \partial E \}.$$
In many situations of stochastic bifurcation theory, this setting may even be more appropriate and insightful since the deterministic counterparts are local bifurcations in contrast to the globally spreading noise. Considering the problem on a bounded domain helps to control the geometric forces within a dynamically relevant neighbourhood and makes the local effects of noise detectable. 

The theory of conditioned processes goes back to the pioneering work of Yaglom in 1947 \cite{yag47}, but in recent years, new ideas have been developed (see \cite{cmm13, mv12} for recent surveys). 
Due to the loss of mass by absorption at the boundary, the existence of a stationary distribution is impossible and, therefore, stationarity is replaced by quasi-stationarity. A \textit{quasi-stationary distribution} preserves mass along the process conditioned on survival. Given a unique quasi-stationary distribution for a Markov process $(Z_t)_{t \geq 0}$ on a state space $E$, one can derive the existence of a \textit{quasi-ergodic distribution} $m$ \cite{bro99}.
If the unit tangent bundle process $(Z_t, s_t)_{t \geq 0}$ possesses a joint quasi-ergodic distribution $m$ on $E\times \mathbb S^{d-1}$, 
Engel et al.~\cite{engellambrasmussen19_2} obtain the existence of a conditioned Lyapunov exponent, independently from $Z_0 \in E$ and $s_0 \in \mathbb S^{d-1}$,
\begin{equation} \label{Conditioned Lyapunov exponent}
\Lambda_c = \lim_{t \to \infty} \frac{1}{t} \mathbb{E}_{Z_0} \left[ \ln \| \rmD_Z \varphi(t,\cdot,Z_0) s_0 \| \bigg|  T > t \right] = \int_{\mathbb S^{d-1} \times E}  \langle s, \rmD_Z f(Z) s \rangle \ m( \rmd Z, \rmd s)\,,
\end{equation} 
where $\rmD_Z \varphi(t,\omega,Z_0)$ solves equation~\eqref{Linearization} for $t < T(\omega, Z_0)$ (see Appendix~\ref{appx-cle}).

\medskip
In this paper, we prove (with computer assistance) that for some given domain $E$ and parameter values $\alpha$, $\beta$, $a$, $b$ and $\sigma$, this conditioned Lyapunov exponent is positive. Here is a typical result that we can obtain. More examples with different parameter values are given in Section~\ref{sec:results}.
\begin{theorem} \label{thm:main}
Consider the random dynamical system induced by the SDE~\eqref{MainEq_killed} on an annulus $B_{\rmax}(0) \setminus B_{\rmin}(0) \subset \R^2$ with absorption at the boundary. For $\rmin=0.5$, $\rmax=1.5$, $a =\beta = \alpha = 1$, $ b =3.6$ and $\sigma = 1.3$,
the conditioned Lyapunov exponent $\Lambda_c$ is positive. 
\end{theorem}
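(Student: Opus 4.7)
My plan is to combine the modified Furstenberg--Khasminskii formula~\eqref{Conditioned Lyapunov exponent} with a rigorous, computer-assisted enclosure of the principal eigenfunctions of the two Kolmogorov operators associated with the killed linearised process on $E\times\mathbb{S}^1$. If $\L$ denotes the Kolmogorov backward generator of the joint diffusion $(Z_t,s_t)$ with Dirichlet boundary condition on $\partial E\times\mathbb{S}^1$ and $\L^*$ its formal adjoint, then under a spectral gap at the leading eigenvalue $-\l_0$ there exist principal eigenfunctions $\psi_0$ of $\L$ and $\varphi_0$ of $\L^*$, both positive in the interior, and the joint quasi-ergodic density appearing in~\eqref{Conditioned Lyapunov exponent} is proportional to $\varphi_0\psi_0$. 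Thus
\[
\Lambda_c \;=\; \frac{\int_{E\times\mathbb{S}^1} \langle s, \rmD_Z f(Z) s\rangle\, \varphi_0(Z,s)\psi_0(Z,s)\,\rmd(Z,s)}{\int_{E\times\mathbb{S}^1} \varphi_0(Z,s)\psi_0(Z,s)\,\rmd(Z,s)}\,,
\]
and the task reduces to enclosing these two eigenfunctions with validated error bounds and then evaluating the integrals in interval arithmetic.

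For the numerical part I would work in polar coordinates $(r,\theta)$ on the annulus and parametrise $s\in\mathbb{S}^1$ by an angle $\phi$, expanding each eigenfunction in a tensor basis of Chebyshev polynomials in $r\in[\rmin,\rmax]$ (with Dirichlet end-conditions enforced by a linear side equation on the coefficient sequence) and Fourier modes in $\theta$ and $\phi$. Standard numerical eigensolvers then produce an approximate eigentriple $(\bar\l_0,\bar\psi_0,\bar\varphi_0)$. Because the target regime $b=3.6$ is strongly shear-dominated and the eigenfunctions are expected to concentrate along the spiralling characteristics of the underlying deterministic flow, I would drive the solver by a homotopy in $b$ (and if needed in $\sigma$) starting from a small-shear configuration where the principal eigenpair is close to the rotationally symmetric case, applying Newton's method along the path to keep the approximations accurate and to supply good initial data for the validation step.

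For the validation proper, I would rewrite the eigenproblem as a zero-finding equation $F(x)=0$ with $x=(\l,\psi)$, the unknown eigenvalue being pinned by an explicit linear normalisation of $\psi$, and work in a Banach algebra of Fourier--Chebyshev coefficient sequences equipped with geometric weights. A Newton--Kantorovich (radii-polynomial) argument then certifies an exact zero $x^*$ inside an explicit ball around $\bar x$, provided one has interval bounds on the defect $\|F(\bar x)\|$, on the norm of an approximate inverse $A$ of $\rmD F(\bar x)$ (with its infinite-dimensional tail blocks controlled by explicit spectral estimates on $\L-\bar\l$), and on the local Lipschitz constant of $\rmD F$ on the trial ball. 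Running the same scheme on $\L^*$ yields a validated enclosure of $\varphi_0$. The enclosures are then propagated through the integrals in the formula above by interval quadrature on the truncated coefficient expansion together with rigorous tail estimates, and a strict positive lower bound on the resulting ratio is finally checked.

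The main obstacle I anticipate is the conditioning of the Newton--Kantorovich step at the target parameters. Large shear $b=3.6$ produces steep angular gradients in both eigenfunctions, so a fairly high truncation in the Fourier indices is needed before the tail contribution to the approximate inverse is small enough for the radii polynomial to admit a positive root; simultaneously, the spectral gap separating $-\l_0$ from the rest of the spectrum of $\L$ must be rigorously bounded away from zero, which is far from obvious at large shear. The homotopy method is precisely the tool I expect to rely on to overcome this: it lets one track the eigenpair from a benign regime, where these estimates are easy, through a sequence of intermediate validations, so that the delicate final proof at $b=3.6$ starts from a highly accurate numerical approximation and a small Banach ball, making the radii-polynomial inequality checkable in practice.
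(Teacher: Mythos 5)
Your overall architecture (modified Furstenberg--Khasminskii formula, validated enclosures of the principal eigenfunctions of the backward and forward generators, interval evaluation of the integral) is the same as the paper's, but three of your concrete choices hide genuine gaps. First, you set up the eigenproblems for the joint generator of $(Z_t,s_t)$ on $E\times\mathbb{S}^1$, i.e.\ a three-dimensional problem in $(r,\theta,\phi)$. That generator is \emph{degenerate}: the noise is purely additive in $Z$, so the projective component $s_t$ carries no independent noise and the diffusion matrix has rank two on a three-dimensional state space. The uniform ellipticity assumed for the quasi-stationary/quasi-ergodic framework (and needed for every coercivity-type estimate in a validation) fails in your formulation. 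The paper's essential structural step, which you are missing, is the reduction to the variables $(r,\psi)$ with $\psi=2\theta-\chi_0-2\phi$: the expansion rate $e$ depends only on $(r,\psi)$, the reduced SDE~\eqref{eq:SDE_FKformula} is uniformly elliptic (the $\phi$-noise is inherited by $\psi$), and the eigenproblems for $L$, $L^*$ in~\eqref{eq:backwardKO}--\eqref{eq:forwardKO} live on the two-dimensional rectangle $(\rmin,\rmax)\times(0,2\pi)$, which is what makes the rigorous computation feasible at all.

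Second, your validation mechanism is the approximate-inverse/radii-polynomial argument in a weighted coefficient Banach algebra, with tail blocks controlled by spectral estimates. The paper explicitly considered and discarded that route, because the principal part $\tD$ has the non-constant coefficient $4/r^2$, which makes the tail estimates for an approximate inverse problematic; instead it bounds $\Vert S^{-1}\Vert$ through a rigorous lower bound on the smallest eigenvalue of $S^*S$, obtained by \emph{two nested homotopies with Rayleigh--Ritz and Lehmann--Maehly bounds} (from $\tD^{(0)}$ to $\tD$, then from $\S^{(0)}$ to $\S$), followed by the lifting Lemma~\ref{lem:kappa}. Your ``homotopy'' is only a numerical continuation in $b$ to produce good initial guesses; it plays no rigorous role and does not substitute for this machinery. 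Third, you assert that the validated pair is the principal one and merely flag the spectral gap as an obstacle, without a checkable criterion. The paper needs no quantitative gap estimate: Proposition~\ref{prop:correct_eig} identifies the correct eigenpairs a posteriori from the sign condition $\eta\ge 0$ (verified via explicit $C^0$/$C^1$ embeddings, using Corollary~\ref{cor:1D} to know $\eta$ is radial near the absorbing boundary) together with $\langle\eta,\phi\rangle_{L^2}\neq 0$. Without an analogue of this step, even a successfully validated eigenpair would not justify inserting it into~\eqref{eq:FKformula_cond}, so your proof as proposed would not close.
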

The highly challenging ingredient for computing or estimating values of $\Lambda_c$ is to find the quasi-ergodic distribution $m$ in formula~\eqref{Conditioned Lyapunov exponent}. We will see that the statistics of the unit tangent bundle process $X_t = (Z_t, s_t)_{t \geq 0}$ can be obtained from an SDE on some bounded domain $\tilde E$
\begin{equation}\label{Gen_killed}
\rmd \tilde X_t= \tilde f (\tilde X_t)\rmd t
+
\tilde \sigma(\tilde X_t) \, \rmd W_t \,, \quad \tilde X_0 \in \tilde E \subseteq \R^d\,.
\end{equation}
Let us assume that the associated generator $\mathcal L$, given by
$$ \mathcal L  = \tilde f \cdot \nabla  + \frac{1}{2} \tilde \sigma \tilde \sigma^* : \nabla^2 \,,$$
and its formal $L^2$-adjoint $\mathcal L^*$ are uniformly elliptic. Then one observes (see e.g.~\cite{engellambrasmussen19_2} or for more general background~\cite[Chapter 6]{Schuss}) that the quasi-stationary distribution for the process solving~\eqref{Gen_killed} has the density $\phi$, vanishing at the boundary and satisfying for the exponential escape rate $\lambda_0 < 0$
\begin{equation*} 
  \mathcal{L}^* \phi = \lambda_0 \phi\,,  
\end{equation*}
where $\lambda_0$ is the eigenvalue with largest non-zero real part.
Furthermore we know (see e.g.~\cite{bro99,engellambrasmussen19_2}) that, given the eigenfunction $\eta$ with
\begin{equation*} 
\mathcal{L} \eta = \lambda_0 \eta\,, \ \eta = 0 \text{ on } \partial \tilde E\,,
\end{equation*}
the quasi-ergodic distribution $m$ satisfies
\begin{equation*} 
m(\rmd x) = \eta(x) \phi(x) \rmd x\,.
\end{equation*}
For our situation we will see that the calculations of $\eta$ and $\phi$, and by that $m$, can be done numerically; to make the calculation rigorous, we need guaranteed error bounds on the computed objects. This is a highly non-trivial task, which the major part of the paper is dedicated to, and which is computer-assisted.

\medskip

Since the proof of the universality of the Feigenbaum constant~\cite{Lan82}, and later on the proof of chaos~\cite{GalZgl98, MisMro95} and of the existence of a strange attractor~\cite{Tuc02} in the Lorenz system, computer-assisted proofs have become more and more frequent in dynamical systems. The techniques that we use in this paper fall into the category of \emph{a posteriori validation methods}, meaning that we first compute a numerical approximation of the solution of interest --- in our case, an approximate eigenpair of $\L$ or $\L^*$ --- and then use a fixed point argument to simultaneously prove the existence of an exact solution nearby and get explicit error bounds. We describe such techniques in more details in Section~\ref{sec:validation}, and refer the interested reader to the survey papers~\cite{Gom19,KapMroWilZgl20,KocSchWit96,Rum10,BerLes15} and books~\cite{NakPluWat19,Tuc11} for a broader overview on rigorous numerics and computed-assisted proofs for non-linear equations. 

Until recently, most of these computer-assisted proofs for dynamical system where focused on \emph{deterministic} dynamical systems. However, several questions about stochastic dynamical systems can be reduced to questions about deterministic objects, for instance using large deviation theory in the small-noise case, or more generally via the transfer operator (resp. forward Kolmogorov/Fokker-Planck operator) for stochastic maps (resp. stochastic differential equations). These deterministic objects can be studied very precisely using computer-assisted techniques, and the results can then be transferred back to give new rigorous insight on the initial stochastic system, see e.g.~\cite{BreKue19, GalMonNis20}. This is the general strategy that we pursue in this work.
While we believe that the main interest of this work lies in the result itself, namely the proof of shear induced chaos, we also believe that some of the techniques that we introduce might be of interest. In particular, even if the fundamental ideas behind the computer-assisted part of this work are by now standard in some communities, it is, up to our knowledge, the first time that these ideas could be adapted and brought to fruition in order to directly handle an elliptic operator with a leading differential operator (in our case a Laplacian) having non-constant coefficients; this could open the door for many interesting further problems, in particular in the context of Fokker-Planck equations associated to SDEs.

The remainder of this paper is structured as follows. Section~\ref{sec:FKformula} describes the derivation of the formula for the first and conditioned Lyapunov exponent and introduces the corresponding PDE problem. In Section~\ref{sec:validation} we give the abstract framework for the computer-assisted proof and show that the problem at hand fits into this setting with suitable a-priori bounds. Finally, in Section~\ref{sec:numerics}, we implement the proof method and conduct the rigorous numerics to get tight enclosures of the conditioned Lyapunov exponent for different parameter values, including those of Theorem~\ref{thm:main}. Appendices \ref{appx-rds}, \ref{appx-LyapSpec} and \ref{appx-cle} provide background information on random dynamical systems and Lyapunov exponents while appendices \ref{appx-elem_est}, \ref{appx-embedding} and \ref{appx-validation_inv_r} include basic estimates, embedding constants for the functional-analytic framework and error bounds for the rigorous numerics.


\section{Furstenberg-Khasminskii formula and PDE formulation} \label{sec:FKformula}
Based on an approach by DeVille et al.~in \cite{dsr11}, we consider the two-dimensional problem~\eqref{MainEq_killed} in polar coordinates
\begin{equation*}
r = \sqrt{x^2 + y^2}, \ \phi = \arctan(\frac{y}{x})\,.
\end{equation*}
Applying It\^{o}'s rule to the stochastic differential equation~\eqref{MainEq_killed} we obtain
\begin{equation} \label{Polar_original}
\left\{\begin{aligned} 
\rmd r &= \left( \alpha r  - a r^3  +  \frac{\sigma^2}{2r} \right) \rmd t  + \sigma (\cos \phi \, \rmd W_t^1 +  \sin \phi \, \rmd W_t^2), \nonumber\\
\rmd \phi &= (\beta + br^2) \, \rmd t + \frac{\sigma}{r}( -  \sin \phi \, \rmd  W_t^1 +  \cos \phi \, \rmd W_t^2).
\end{aligned}\right.
\end{equation}
This form illustrates the role of the parameter $b$ inducing a shear force: if $b > 0$, the phase velocity $\frac{\rmd \phi}{\rmd t}$ depends on the amplitude $r$. 
Since Gaussian random vectors are invariant under orthogonal transformations, we can define the independent Wiener processes
\begin{align*}
\rmd W_r &= \cos \phi  \,\rmd W_t^1 + \sin \phi \, \rmd W_t^2,\\ 
\rmd W_\phi  &= - \sin \phi \, \rmd  W_t^1 + \cos \phi  \, \rmd W_t^2.
\end{align*}
Hence, the Markov process solving 
\begin{equation*}
\left\{\begin{aligned}
&\d r = \left(\alpha r - ar^3 + \frac{\sigma^2}{2r}\right)\d t + \sigma \d W_r \\
&\d \phi = \left(\beta + br^2\right) \d t + \frac{\sigma}{r} \d W_\phi.
\end{aligned}\right.
\end{equation*}
corresponds with \eqref{MainEq_killed} in terms of the It\^{o} integral. 

The associated variational equation~\eqref{Linearization}, also when taking into account killing at the boundary, reads in polar coordinates
\begin{equation*}
\left\{\begin{aligned}
&\d\rho =\rho \left(\alpha -2ar^2 +r^2\sqrt{a^2+b^2}\sin(2\theta-\chi_0-2\phi)\right) \d t \\
&\d \theta = \left(\beta + 2br^2 +r^2\sqrt{a^2+b^2}\cos(2\theta-\chi_0-2\phi)\right) \d t,
\end{aligned}\right.
\end{equation*}
where $\chi_0=\arccos\left(\frac{b}{\sqrt{a^2+b^2}}\right)$. Introducing $\psi=2\theta-\chi_0-2\phi$ we see that, the linear expansion rate
\begin{equation*}
e(r,\psi)=\alpha -2ar^2 +r^2\sqrt{a^2+b^2}\sin\psi
\end{equation*}
is determined only by $r$ and $\psi$, which satisfy the following system
\begin{equation} \label{eq:SDE_FKformula}
\left\{\begin{aligned}
&\d r = \left(\alpha r - ar^3 + \frac{\sigma^2}{2r}\right)\d t + \sigma \d W_r, \\
&\d \psi = 2r^2\left(b + \sqrt{a^2+b^2}\cos\psi\right)\d t - \frac{2\sigma}{r} \d W_\varphi.
\end{aligned}\right.
\end{equation}
The associated \emph{backward} and \emph{forward Kolmogorov operators} are then given by
\begin{align} \label{eq:backwardKO}
Lu = \frac{\sigma^2}{2}\left(\frac{\partial^2 u}{\partial r^2} + \frac{4}{r^2}\frac{\partial^2 u}{\partial \psi^2}\right) + \left(\alpha r - ar^3 + \frac{\sigma^2}{2r}\right)\frac{\partial u}{\partial r} + 2r^2\left(b+\sqrt{a^2+b^2}\cos\psi\right)\frac{\partial u}{\partial \psi},
\end{align}
and
\begin{align} \label{eq:forwardKO}
L^*u = \frac{\sigma^2}{2}\left(\frac{\partial^2 u}{\partial r^2} + \frac{4}{r^2}\frac{\partial^2 u}{\partial \psi^2}\right) - \frac{\partial}{\partial r}\left[\left(\alpha r - ar^3 + \frac{\sigma^2}{2r}\right)u\right] -\frac{\partial}{\partial \psi}\left[2r^2\left(b+\sqrt{a^2+b^2}\cos\psi\right) u\right].
\end{align}
In the case without killing, one can derive the \emph{Furstenberg-Khasminskii formula} \cite{a98} for the largest Lyapunov exponent $\Lambda_1$ on $E=\R^2$
\begin{equation*}
\Lambda_1 = \iint e(r,\psi) p(r, \psi) \d r \d \psi,
\end{equation*}
where $p$ is the stationary density for system~\eqref{eq:SDE_FKformula}, solving
the stationary forward Kolmogorov equation
$$ L^* p =0.$$
Finding or just making useful estimates for this density has proven to be extremely difficult (cf.~\cite{dsr11, DoanEngelLambRasmussen}). A rigorous computation of $p$ on the whole space $\R^2$ also seems out of reach for the moment.

Hence, we make use of the theory of conditioned Lyapunov exponents as an approximation of $\Lambda_1$ in a well-defined analytical framework, and consider the SDE~\eqref{MainEq_killed} on the bounded domain $E:=\tilde \Omega=B_{\rmax}(0)\setminus B_{\rmin}(0)$, with $0<\rmin<\rmax<\infty$.
Denoting by $\eta$ and $\phi$ the (normalized) eigenfunctions associated to the eigenvalue $\lambda_0$ with largest real part of $L$ and $L^*$ respectively, the conditioned Lyapunov exponent~\eqref{Conditioned Lyapunov exponent} can be expressed by the modified Furstenberg-Khasminskii formula \cite{engellambrasmussen19_2}
\begin{equation} \label{eq:FKformula_cond}
 \Lambda_c = \iint e(r,\psi) \eta(r,\psi) \phi(r,\psi) \d r \d \psi.
\end{equation}
\begin{figure}[h!]
\begin{center}
\subfloat[\label{fig:eta_largedomain}$\eta(r, \psi)$]{
\begin{overpic}[width=0.45\linewidth]{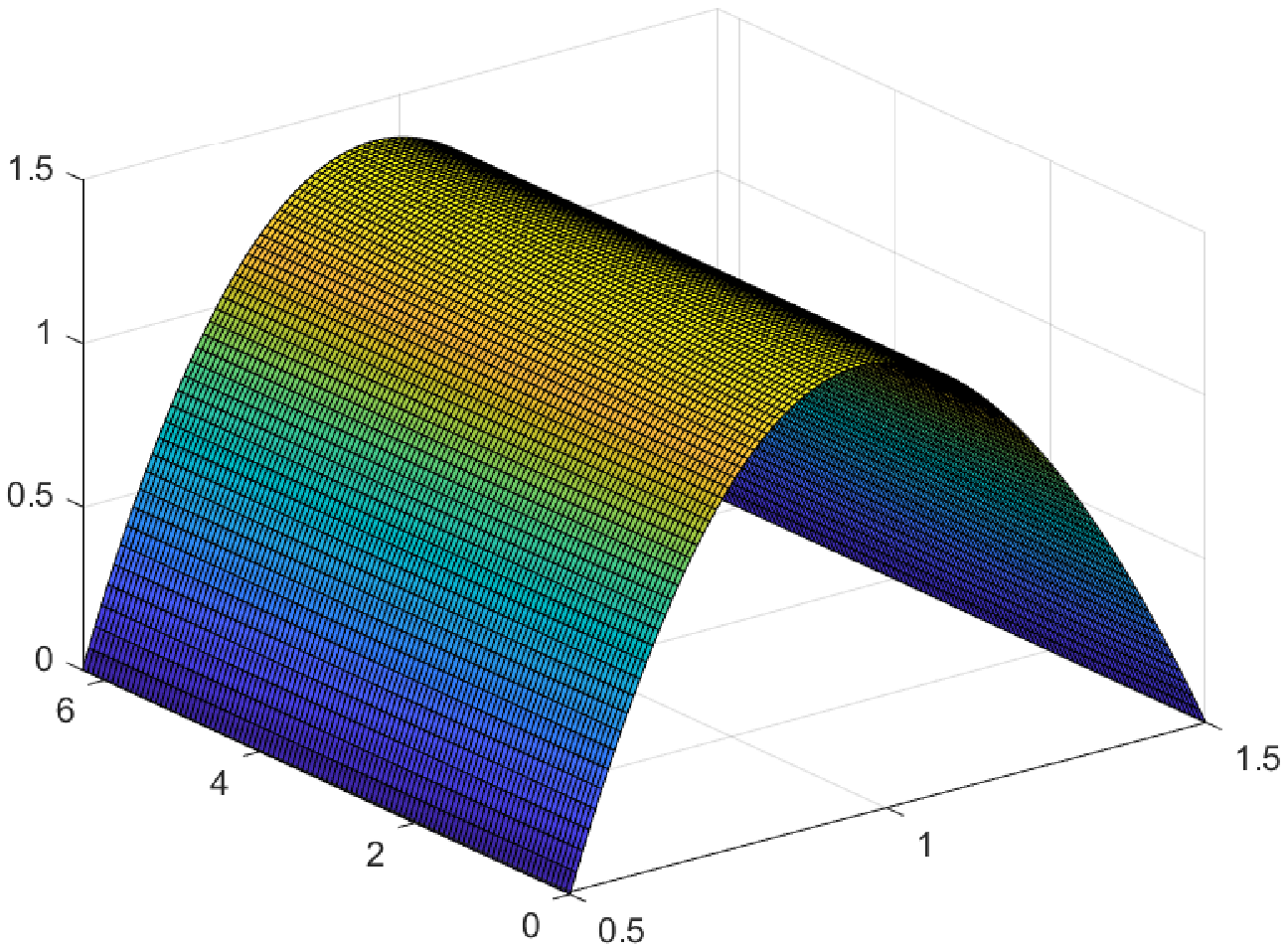}
\put(75,8){\scriptsize $r$}
\put(20,10){\scriptsize $\psi$}
\end{overpic} 
}
\subfloat[\label{fig:phi_largedomain}$\phi(r, \psi)$]{
\begin{overpic}[width=0.45\linewidth]{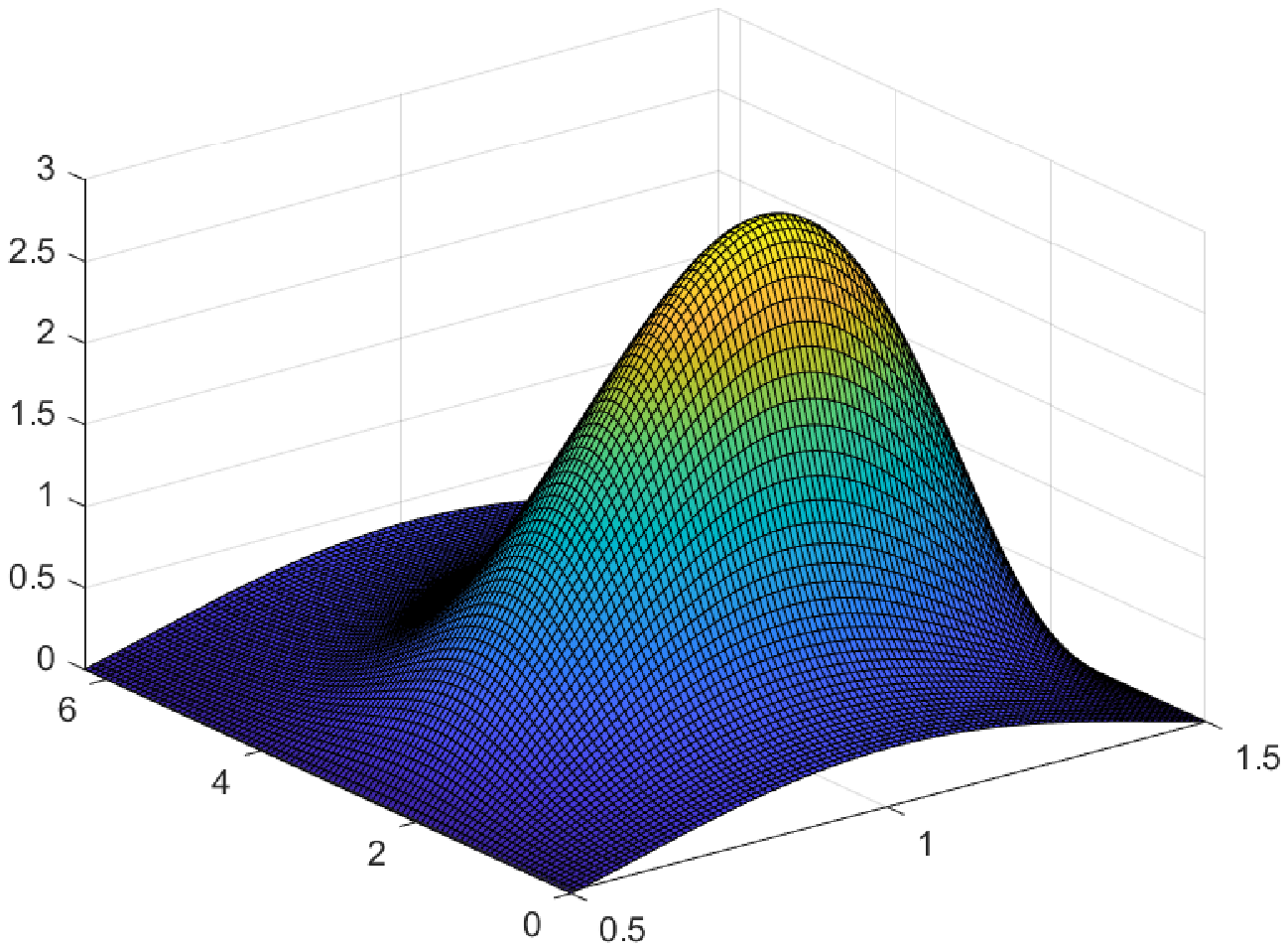}
\put(75,8){\scriptsize $r$}
\put(20,10){\scriptsize $\psi$}
\end{overpic} 
}

\subfloat[\label{fig:e_largedomain}$e(r, \psi)$]{
\begin{overpic}[width=0.45\linewidth]{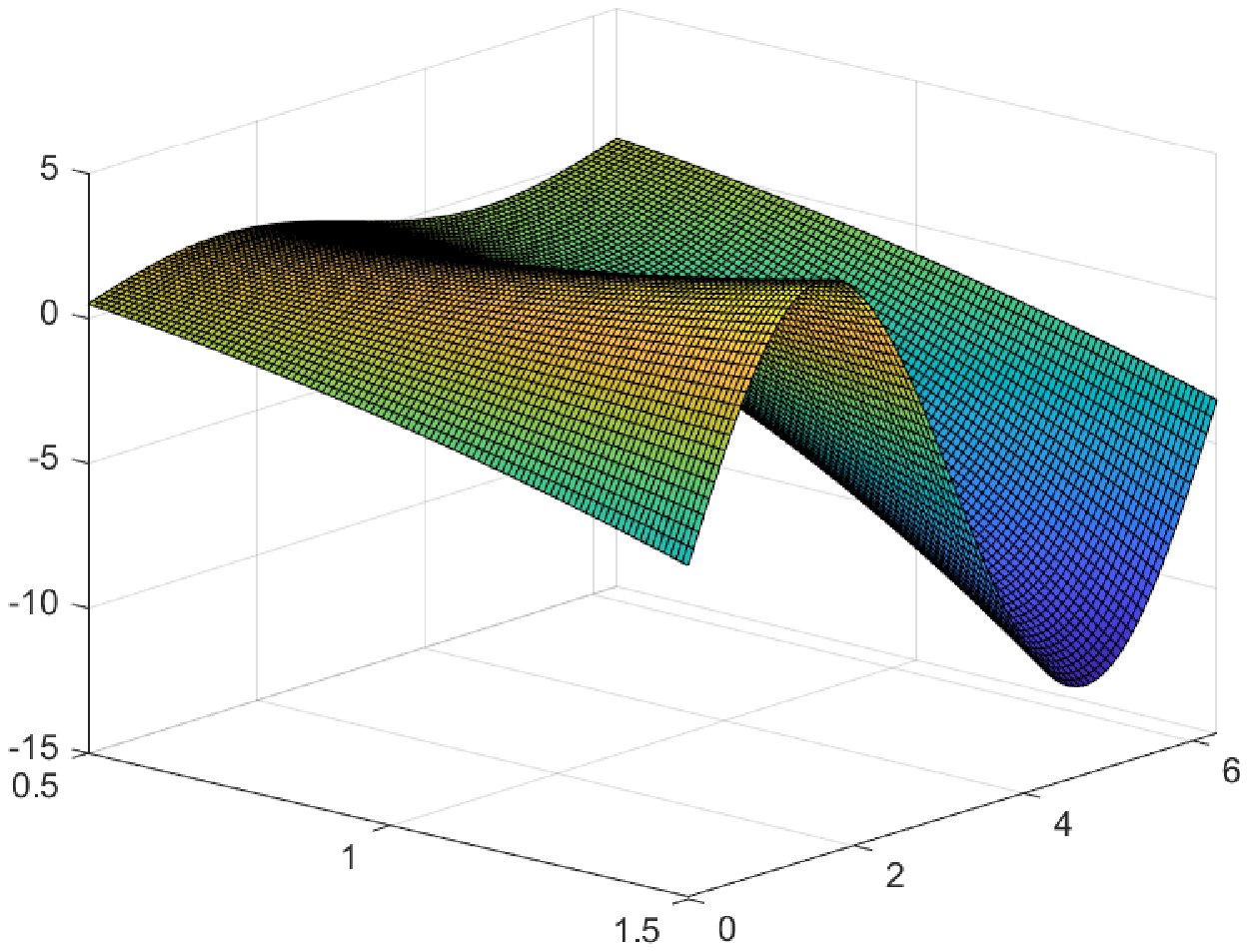}
\put(75,8){\scriptsize $\psi$}
\put(20,10){\scriptsize $r$}
\end{overpic} 
}
\end{center}
\caption{The eigenfunctions $\eta$ of $L$ (a) and $\phi$ of $L^*$ (b), and the modified Furstenberg-Khasminskii functional $e$ (c), for the domain and the parameter values as used in Theorem~\ref{thm:main}.}
\label{fig:eigenfunctions_rlarge}
\end{figure}
In Figure~\ref{fig:eigenfunctions_rlarge}, we illustrate numerical approximations of the functions $e, \eta, \phi$ involved in formula~\eqref{eq:FKformula_cond}, with the domain and the parameter values chosen as in Theorem~\ref{thm:main}. Observe that the function $e$ has regions of positive and negative output, expressing areas of expansion and contraction as it is typical for non-uniformly hyperbolic dynamics. Hence, to be able to say something about the sign of the integral~\eqref{eq:FKformula_cond}, one must be able to describe quite precisely the eigenfunctions $\eta$ and $\phi$. This is why a computer-assisted argument seems particularly relevant (and almost mandatory) here, as it will allow us to get a precise numerical approximation of $\eta$ and $\phi$ together with guaranteed and reasonably tight error bounds. In the remainder of the paper, we show how to obtain such rigorous computation of the eigenfunctions, and use them to study the sign of $\Lambda_c$.
\begin{remark}
In general, when the bounded domain $E \to \mathbb R^2$, we clearly have that the escape rate and eigenvalue $\lambda_0$ converges to $0$ such that $\eta \to 1$ pointwise and $\phi \to p$ as tempered distributions, and, hence, $\Lambda_c \to \Lambda_1$ (see \cite{engellambrasmussen19_2} for more details). In our case, we can take $\rmax \to \infty$ accordingly but for $\rmin \to 0$ we arrive at absorption at the origin which is not congruent with the full $\mathbb{R}^2$-model. However, for $\alpha >0$, small neighbourhoods of the origin contribute to a positive rather than a negative Lyapunov exponent such that our results can still be seen also as strong evidence for Conjecture~\ref{conj:R2}; see Section~\ref{sec:results} for a more detailed discussion of computational findings and numerical indications with respect to this issue.
\end{remark}


\section{A posteriori validation of an eigenpair}
\label{sec:validation}

In this Section, we describe the computer-assisted techniques allowing us to get our hands on the eigenfunctions $\eta$ and $\phi$ required for the computation of the Furstenberg-Khasminskii formula~\eqref{eq:FKformula_cond}. In Section~\ref{sec:notations}, we start by introducing some notations and the functions spaces we are going to work in. We then reformulate the problem of looking for eigenfunctions and eigenvalues of the backward Kolmogorov operator $L$ into a fixed-point problem in Section~\ref{sec:fixed-point}, and give sufficient conditions for a fixed-point Theorem to be used. These sufficient conditions are to be checked, rigorously, with the help of a computer, and we present the required theory in Section~\ref{sec:norm_inverse}. Eigenfunctions and eigenvalues of the forward Kolmogorov operator $L^*$ can be handled in a very similar fashion, and we describe the small adjustement that have to be made in Section~\ref{sec:Lstar}. Finally, in~\ref{sec:smallesteigenvalue}, we show how to make sure that we did not obtain any eigenpairs of $L$ and $L^*$, but that we indeed have the ones needed for the Furstenberg-Khasminskii formula~\eqref{eq:FKformula_cond}.

Although some of the arguments presented in this section rely on the computer in a crucial way, we mainly focus on the theoretical aspects here, and postpone the discussions about practical implementation to Section~\ref{sec:numerics}.

\subsection{Setting and notations}
\label{sec:notations}

Introducing the notations
\begin{equation} \label{nablatilde}
\tN = \begin{pmatrix}
\frac{\partial}{\partial_r} \\
\frac{2}{r}\frac{\partial}{\partial_\psi}
\end{pmatrix},
\end{equation}
\begin{equation} \label{deltatilde}
\tD u := \tN \cdot \tN u = \frac{\partial^2 u}{\partial r^2} + \frac{4}{r^2}\frac{\partial^2 u}{\partial \psi^2},
\end{equation}
\begin{equation} \label{fandg}
f(r) = \alpha r-ar^3 +\frac{\sigma^2}{2r}, \quad g(r,\psi) = 2r^2\left(b+\sqrt{a^2+b^2}\cos\psi\right),
\end{equation}
\begin{equation} \label{h}
h(r,\psi) = \frac{\partial f}{\partial r}(r) + \frac{\partial g}{\partial \psi}(r,\psi) = \alpha -3ar^2-\frac{\sigma^2}{2r^2} - 2r^2\sqrt{a^2+b^2}\sin\psi,
\end{equation}
and 
\begin{equation} \label{Vformula}
V = f(r)\frac{\partial}{\partial r} + g(r,\psi)\frac{\partial}{\partial \psi},
\end{equation}
we can write the backward and forward Kolmogorov operators~\eqref{eq:backwardKO} and~\eqref{eq:forwardKO} in a more condensed form:
\begin{equation*}
Lu = \frac{\sigma^2}{2}\tD u + V u \quad \text{and}\quad L^*u = \frac{\sigma^2}{2}\tD u + V^* u = \frac{\sigma^2}{2}\tD u - Vu -hu.
\end{equation*}
We consider the domain
\begin{equation} \label{eq:domain}
\Omega=(\rmin,\rmax)\times(0,2\pi).
\end{equation}
The main functions spaces that we are going to use are  $L^2$ and the Sobolev space $H^2$, with appropriate boundary conditions:
\begin{equation*}
X:= \left\{ u\in H^2(\Omega) \ |\ u(\rmin,\psi)=u(\rmax,\psi)=0\ \forall~\psi\in(0,2\pi),\ u(r,0)=u(r,2\pi)\ \forall~r\in (\rmin,\rmax) \right\},
\end{equation*}
\begin{equation*}
Y:= L^2(\Omega).
\end{equation*}
These spaces will be equipped with weighted inner products:
\begin{equation*}
\langle u_1,u_2\rangle_X :=  \langle u_1,u_2\rangle_{L^2} +  \langle \tN u_1,\tN u_2\rangle_{L^2} + \xi_2 \langle \tD u_1, \tD u_2\rangle_{L^2},\qquad \langle u_1,u_2\rangle_Y :=  \langle u_1,u_2\rangle_{L^2},
\end{equation*}
where the scalar product on $L^2(\Omega)$ is defined by
\begin{equation}
\label{eq:scal_L2}
\langle u_1,u_2\rangle_{L^2} := \frac{1}{2\pi(\rmax-\rmin)}\int_\Omega u_1(r,\psi) \left(u_2(r,\psi)\right)^*\, dr\, d\psi,
\end{equation}
$u_2^*$ denotes the complex conjugate and $\xi_2$ is some positive weight (whose role will become apparent in Section~\ref{sec:kappa0tokappa}). We denote the associated norms by
\begin{equation*}
\left\Vert u \right\Vert_X := \sqrt{\langle u,u\rangle_X} \qquad\text{and}\qquad 
\left\Vert u \right\Vert_Y := \sqrt{\langle u,u\rangle_Y}.
\end{equation*}
Notice that $\left\Vert \cdot \right\Vert_X$ is equivalent to the canonical norm on $H^2$ (see Appendix~\ref{appx-elem_est}), therefore $\left(X,\left\Vert\cdot  \right\Vert_X\right)$ is still a Hilbert space.

\begin{remark}
We could have added wore weights in the norm on $X$, by considering
\begin{equation*}
\langle u_1,u_2\rangle_X :=  \xi_0 \langle u_1,u_2\rangle_{L^2} +  \xi_1 \langle \tN u_1,\tN u_2\rangle_{L^2} + \xi_2 \langle \tD u_1, \tD u_2\rangle_{L^2}.
\end{equation*}
Adding such weights is often a good idea for the quantitative type of arguments we are going to use in this work, because their value can be chosen a posteriori in order to make the validation easier. However, in this case it turned out that this extra flexibility did not bring significant improvements, and we therefore chose to remove those weights in order to simplify a bit the presentation.
\end{remark}

Since we are going to solve for both an eigenfunction and an eigenvalue at the same time, we must append an extra scalar field to our spaces, and thus define
\begin{equation*}
\X := X\times\C \qquad\text{and}\qquad \Y := Y\times\C.
\end{equation*}
A generic element in $\X$ (resp. $\Y$) will typically be denoted by $(u,\lambda)$, where $u\in X$ (resp. $u\in Y$) and $\lambda\in \C$. The inner products (and the associated norms) we consider on $\X$ and $\Y$ are directly inherited from those defined on $X$ and $Y$:
\begin{equation*}
\langle (u_1,\l_1) , (u_2,\l_2) \rangle_\X := \langle u_1 , u_2 \rangle_X + \l_1\l_2^* \qquad\text{and}\qquad \langle (u_1,\l_1) , (u_2,\l_2) \rangle_\Y := \langle u_1 , u_2 \rangle_Y + \l_1\l_2^*.
\end{equation*}

\subsection{The fixed point problem}
\label{sec:fixed-point}

We now give the analytical framework within which we are going to conduct the computer-assisted proof.

Let $(\bu,\bl)$ be a numerically computed approximate eigenpair of $L$, and the map $\mathcal F: \X \to \Y$ defined by
\begin{equation} \label{OperatorcalF}
\mathcal{F} [(u, \lambda)] := 
\begin{pmatrix}[1.5]
L u - \lambda u \\ 
\langle u,\bu\rangle_{L^2} -1
\end{pmatrix}.
\end{equation}
Our first goal is to prove that there is a genuine zero of $\F$ (i.e.~a genuine eigenpair of $L$) in a small and explicit neighborhood of $(\bu, \bl)$. 
\begin{remark}
\label{rem:normalization}
Since our proof will be based on a contraction argument, the map $\F$ that we consider must have a locally unique zero in a neighborhood of $(\bu,\bl)$, which is why we must append a \emph{normalization condition} to the equation $Lu-\l u =0$. Many such conditions could be chosen, but taking $\langle u,\bu\rangle_{L^2} -1=0$ will prove convenient later on, in particular for the estimates of Lemma~\ref{lem:kappa}.
\end{remark}
\begin{remark}
For the moment, we focus exclusively on rigorously validating an eigenpair for $L$, but the same method can and will be used to validate an eigenpair of $L^*$. In Section~\ref{sec:Lstar}, we highlight the small changes that have to be made when we consider $L^*$ instead of $L$.
\end{remark}
The corner stone of our compter-assisted approach is the following Newton-Kantorovich-like theorem. Very similar statements have already appeared in many instances, especially in the computer-assisted-proof literature (and at least as early as 1965~\cite{Ura65}). The version below is strongly inspired from \cite[Theorem 6.2]{NakPluWat19}.
\begin{theorem} \label{thm:zero_of_F}
Let $(\bu,\bl)\in \X$ and $\mathcal F$ be as in~\eqref{OperatorcalF}. Denoting by $\F'$ the Frechet derivative of $\F$, we assume there exist nonnegative constants $\delta$, $\kappa$ and $\gamma$ satisfying
\begin{align}
\label{eq:delta}
\Vert \F[(\bu,\bl)] \Vert_\Y &\leq \delta, \\
\label{eq:kappa}
\Vert (u,\l) \Vert_\X &\leq \kappa \Vert \F'(\bu,\bl)[(u,\l)] \Vert_\Y \qquad \forall~(u,\l)\in \X,\\
\label{eq:gamma}
\Vert \F''(\bu,\bl)[(u_1,\l_1),(u_2,\l_2)] \Vert_\Y &\leq \gamma \Vert (u_1,\l_1) \Vert_\X \Vert (u_2,\l_2) \Vert_\X \qquad \forall~(u_1,\l_1),(u_2,\l_2)\in \X,
\end{align}
such that
\begin{equation}
\label{eq:cond_fixed-point}
2\kappa^2\gamma\delta <1.
\end{equation}
Then, for any $\rho$ satisfying
\begin{equation}
\label{eq:interval_rho}
\frac{1-\sqrt{1-2\kappa^2\gamma\delta}}{\kappa\gamma} \leq \rho < \frac{1}{\kappa\gamma},
\end{equation}
 $\F$ has a unique zero $(u, \lambda) \in \X$, such that
\begin{equation} \label{eq:rho_close}
\|(u,\lambda) - (\bu, \bl) \|_\X \leq \rho.
\end{equation}
\end{theorem}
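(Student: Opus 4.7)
The proof will follow the standard Newton-Kantorovich template (cf.~\cite{NakPluWat19}): construct an explicit contraction whose unique fixed point is the desired zero of $\F$. Writing $\bar v := (\bar u, \bar\l)$ for brevity, the first step is to promote the lower bound~\eqref{eq:kappa} into a genuine isomorphism statement, namely that $\F'(\bar v): \X \to \Y$ is a bijection with $\|\F'(\bar v)^{-1}\|_{\Y \to \X} \leq \kappa$. Injectivity and closed range follow immediately from~\eqref{eq:kappa}, and surjectivity would be obtained from a Fredholm alternative: the operator $L - \bar\l\,\Id$ is Fredholm of index zero because its principal part $\frac{\sigma^2}{2}\tD$ is uniformly elliptic on the bounded domain $\Omega$ with the chosen Dirichlet/periodic boundary conditions, and appending the normalization coordinate only adds a bounded rank-one perturbation, so $\F'(\bar v)$ itself is Fredholm of index zero on $\X \to \Y$.

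Next I would exploit the key structural observation that $\F[(u,\l)] = (Lu - \l u,\, \langle u,\bar u\rangle_{L^2}-1)$ is purely quadratic in $(u,\l)$, the only nonlinearity being the bilinear product $\l u$. Consequently $\F''$ is constant, and Taylor's formula at $\bar v$ is an exact identity
\begin{equation*}
\F[v] = \F[\bar v] + \F'(\bar v)[v - \bar v] + \frac{1}{2}\F''[v-\bar v,\,v-\bar v] \quad \text{for all } v \in \X.
\end{equation*}
Defining the Newton operator $T(v) := v - \F'(\bar v)^{-1}\F[v]$, whose fixed points are precisely the zeros of $\F$, I would combine the identity above with~\eqref{eq:delta}--\eqref{eq:gamma} to obtain, on the closed ball $\overline{B}_\rho(\bar v) := \{v \in \X : \|v-\bar v\|_\X \leq \rho\}$, the invariance estimate
\begin{equation*}
\|T(v) - \bar v\|_\X \leq \kappa\delta + \frac{\kappa\gamma}{2}\rho^2.
\end{equation*}
Thus $T(\overline{B}_\rho(\bar v)) \subseteq \overline{B}_\rho(\bar v)$ whenever $\frac{\kappa\gamma}{2}\rho^2 - \rho + \kappa\delta \leq 0$; under the smallness condition~\eqref{eq:cond_fixed-point}, this quadratic has two positive roots $\rho_\pm = (1\pm\sqrt{1-2\kappa^2\gamma\delta})/(\kappa\gamma)$, and invariance holds exactly for $\rho \in [\rho_-,\rho_+]$.

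For the contraction property I would differentiate, yielding $DT(v) = \F'(\bar v)^{-1}(\F'(\bar v) - \F'(v)) = -\F'(\bar v)^{-1}\F''[\,\cdot\,, v-\bar v]$, and applying~\eqref{eq:kappa} and~\eqref{eq:gamma} gives $\|DT(v)\|_{\X \to \X} \leq \kappa\gamma\|v-\bar v\|_\X \leq \kappa\gamma\rho$. Hence $T$ is a strict contraction on $\overline{B}_\rho(\bar v)$ as soon as $\rho < 1/(\kappa\gamma)$. Since every $\rho$ in the interval~\eqref{eq:interval_rho} satisfies both $\rho \geq \rho_-$ and $\rho < 1/(\kappa\gamma) \leq \rho_+$, Banach's fixed point theorem applies on the complete metric space $\overline{B}_\rho(\bar v)$ and produces a unique fixed point of $T$, which by bijectivity of $\F'(\bar v)$ is a unique zero of $\F$ in that ball, establishing~\eqref{eq:rho_close}. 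The only conceptually non-routine step is the Fredholm surjectivity argument; the rest consists of the familiar Newton-Kantorovich manipulations, with the small bonus that because $\F$ has no cubic or higher-order part, the constant $\gamma$ from~\eqref{eq:gamma} controls the Taylor remainder exactly rather than only locally.
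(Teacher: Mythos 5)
Your proposal is correct and follows essentially the same route as the paper: the same Newton-like operator $T = \mathrm{id} - \F'(\bu,\bl)^{-1}\F$, the same invariance and contraction estimates on the ball of radius $\rho$, and the same use of a Fredholm index-zero argument (the paper phrases it as a compact perturbation of the identity via $\tD^{-1}$, you via ellipticity plus finite-rank coupling) combined with~\eqref{eq:kappa} to get that $\F'(\bu,\bl)$ is an isomorphism. Nothing essential is missing.
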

\begin{proof}
The main idea is to apply Banach's fixed point theorem to the Newton-like operator
\begin{equation}
\label{eq:T}
T:(u,\l) \mapsto (u,\l) - \left(\F'(\bu,\bl)\right)^{-1} \F(u,\l).
\end{equation}
First, we show that $\F'(\bu,\bl)$ is indeed an isomorphism between $\X$ and $\Y$. For any $(u_1,\l_1)\in\X$ and $(u_2,\l_2)\in\Y$, the equation
\begin{equation}
\label{eq:Fprime}
\F'(\bu,\bl)(u_1,\l_1) = (u_2,\l_2),
\end{equation}
is equivalent to
\begin{equation*}
\left\{
\begin{aligned}
&u_1 + \frac{2}{\sigma^2}\tD^{-1}\left(Vu_1-\bl u_1-\lambda_1\bu\right) = \frac{2}{\sigma^2}\tD^{-1}u_2, \\
&\lambda_1 + \left(\langle u_1,\bu\rangle -1 -\lambda_1\right) = \lambda_2.
\end{aligned}
\right.
\end{equation*}
Since $\Omega$ is bounded, the operator on the left hand side is a compact perturbation of the identity, and the Fredholm alternative holds. Besides, we know by~\eqref{eq:kappa} that, when $(u_2,\l_2) = (0,0)$, the only solution of~\eqref{eq:Fprime} is $(u_1,\l_1) =(0,0)$. The Fredholm alternative then yields that~\eqref{eq:Fprime} has a unique solution for any $(u_2,\l_2)\in\Y$.   

The operator $T$ introduced in~\eqref{eq:T} is thus well defined, and maps $\X$ into itself. Besides, $T'(\bu,\bl)=0$, and therefore $T$ will be contracting near $(\bu,\bl)$. Provided $(\bu,\bl)$ gets not mapped too far away from itself by $T$, we should have a stable neighborhood of $(\bu,\bl)$ on which we can apply the contraction mapping theorem. The hypothesis~\eqref{eq:delta}-\eqref{eq:cond_fixed-point} allow us to make this statement quantitative. For any $\rho\geq0$, we denote by $B_{\X,\rho}(\bu,\bl)$ the closed ball with center $(\bu,\bl)$ and radius $\rho$ in $\X$, and we now show that, for any $\rho$ satisfying~\eqref{eq:interval_rho}, $T$ maps $B_{\X,\rho}(\bu,\bl)$ into itself, and that it is contracting on this ball. 

For any $(u,\l)\in\X$, we estimate, using~\eqref{eq:delta}-\eqref{eq:gamma} and the fact that $T'(\bu,\bl)=0$,
\begin{align*}
\Vert T(u,\l) - (\bu,\bl)\Vert_\X &\leq \Vert T(u,\l) - T(\bu,\bl)\Vert_\X + \Vert T(\bu,\bl) - (\bu,\bl)\Vert_\X \\
 &= \frac{1}{2}\Vert T''(\bu,\bl)[(u-\bu,\l-\bl),(u-\bu,\l-\bl)]\Vert_\X + \Vert \left(\F'(\bu,\bl)\right)^{-1} \F(\bu,\bl)\Vert_\X \\
 &\leq \frac{1}{2}\kappa\gamma\Vert (u-\bu,\l-\bl) \Vert_\X^2  + \kappa\delta.
\end{align*}
Assuming $(u,\l)\in B_{\X,\rho}(\bu,\bl)$, where $\rho$ satisfies~\eqref{eq:interval_rho}, we have
\begin{align*}
\Vert T(u,\l) - (\bu,\bl)\Vert_\X &\leq \frac{1}{2}\kappa\gamma\rho^2  + \kappa\delta \\
&\leq \rho,
\end{align*}
by~\eqref{eq:cond_fixed-point}, and therefore $T(B_{\X,\rho}(\bu,\bl)) \subset B_{\X,\rho}(\bu,\bl)$. Besides, for any $(u,\l)$ in $B_{\X,\rho}(\bu,\bl)$, we have
\begin{align*}
\Vert T'(u,\l) \Vert_\X &= \Vert T''(\bu,\bl)(u-\bu,\l-\bl) \Vert_\X \\
&\leq \kappa\gamma\rho \\
&<1
\end{align*}
by~\eqref{eq:cond_fixed-point}. The contraction mapping thus yields the existence of a unique fixed point of $T$ in $B_{\X,\rho}(\bu,\bl)$, which corresponds to a unique zero of $\F$ in that ball.
\end{proof}

In order to apply this theorem, we need computable estimates to find $\delta$, $\kappa$ and $\gamma$ satisfying~\eqref{eq:delta}-\eqref{eq:gamma}. Notice that one can think of the quantities $\kappa$ and $\gamma$ as being intrinsic to the problem, as they only depend on the first two derivatives of $\F$ at the zero we are interested in. Therefore, one can think of them as being given and fixed (even though in practice we will have to work quite hard to get them explicitly), and~\eqref{eq:cond_fixed-point} is going to be satisfied as soon as we have a \emph{sufficiently accurate} numerical approximation $(\bu,\bl)$, so that the residual error $\delta$ is less than $\frac{1}{2\kappa^2\gamma}$.

Using interval arithmetic, it is rather straightforward to obtain a computable and reasonably sharp estimate for~\eqref{eq:delta}. This is detailed in Section~\ref{sec:numerics}. Besides, noticing that the only nonlinear term in $\F$ is the product $\lambda u$, it is easy to check that~\eqref{eq:gamma} holds with $\gamma=1$.  Our main task is therefore to get a computable estimate for~\eqref{eq:kappa}. We will do so in Section~\ref{sec:norm_inverse}.

Before turning to inequality~\eqref{eq:kappa}, we give a corollary of Theorem~\ref{thm:zero_of_F} which will come in handy later on by allowing us to prove that the eigenfunction of $L$ is in fact one-dimensional, i.e. it does not depend on the angular component $\psi$.
\begin{corollary}
\label{cor:1D}
Make the same assumptions as in Theorem~\ref{thm:zero_of_F}, and suppose further that the function $\bu$ does not depend on the $\psi$-variable. Then the zero $(u,\lambda)$ given by Theorem~\ref{thm:zero_of_F} is such that $u$ also does not depend on the $\psi$-variable.
\end{corollary}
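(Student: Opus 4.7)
The strategy is to apply Theorem~\ref{thm:zero_of_F} a second time, but to the restriction of $\F$ to the closed subspace of $\psi$-independent functions, and then exploit the local uniqueness in the full theorem.

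Define the closed subspaces
\begin{equation*}
X_0 := \{u \in X \,:\, \partial_\psi u \equiv 0\}, \qquad Y_0 := \{u \in Y \,:\, \partial_\psi u \equiv 0\},
\end{equation*}
and set $\X_0 := X_0\times\C$, $\Y_0 := Y_0\times\C$, equipped with the norms inherited from $\X$ and $\Y$. Note that $\X_0$ and $\Y_0$ are Hilbert spaces as closed subspaces of Hilbert spaces. The first step is to check that $\F$ preserves $\X_0$, that is, $\F(\X_0) \subset \Y_0$. Inspecting~\eqref{eq:backwardKO}, when $u$ depends only on $r$, both $\partial_\psi u$ and $\partial_\psi^2 u$ vanish, so
\begin{equation*}
Lu = \frac{\sigma^2}{2}\frac{\partial^2 u}{\partial r^2} + f(r)\frac{\partial u}{\partial r} \in Y_0,
\end{equation*}
and since $\bu\in X_0$ by assumption, $Lu - \l u$ and the normalization scalar lie in $Y_0\times\C = \Y_0$. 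Consequently, the Fréchet derivative $\F'(\bu,\bl)$ also maps $\X_0$ into $\Y_0$.

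Next, I would verify that the three hypotheses~\eqref{eq:delta}--\eqref{eq:gamma} of Theorem~\ref{thm:zero_of_F} are automatically inherited by $\F|_{\X_0}$ with the same constants $\delta,\kappa,\gamma$: the residual bound~\eqref{eq:delta} is identical since $\F(\bu,\bl)\in\Y_0$ and the norms agree; the invertibility bound~\eqref{eq:kappa} is a supremum over a larger space, hence holds a fortiori over $\X_0$; and the second-derivative bound~\eqref{eq:gamma} is similarly inherited. The Fredholm alternative argument in the proof of Theorem~\ref{thm:zero_of_F} transposes verbatim (the restriction of $V-\bl$ composed with $\tD^{-1}$ remains a compact perturbation of the identity on $X_0$), so $\F'(\bu,\bl)|_{\X_0}$ is an isomorphism $\X_0\to\Y_0$, and the entire Newton--Kantorovich proof goes through inside $\X_0$. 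Applying Theorem~\ref{thm:zero_of_F} to $\F|_{\X_0}$, with the same $\rho$ as in~\eqref{eq:interval_rho}, yields a zero $(u_0,\l_0)\in B_{\X_0,\rho}(\bu,\bl)$.

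Finally, since $\X_0 \hookrightarrow \X$ isometrically, $(u_0,\l_0)$ is in particular a zero of $\F$ inside the ball $B_{\X,\rho}(\bu,\bl)$. By the uniqueness statement of Theorem~\ref{thm:zero_of_F} (applied in the full space $\X$), we must have $(u_0,\l_0) = (u,\l)$, and hence $u=u_0\in X_0$ is $\psi$-independent. The main obstacle, minor but worth checking carefully, is ensuring that the Fredholm/invertibility step for $\F'(\bu,\bl)$ really does restrict cleanly to $\X_0\to\Y_0$; everything else is an immediate consequence of monotonicity of supremum bounds under restriction to a subspace.
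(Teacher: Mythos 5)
Your proposal is correct and follows essentially the same route as the paper: restrict to the closed subspace of $\psi$-independent functions, check that $\F$ and $\F'(\bu,\bl)$ preserve it and that the derivative is an isomorphism between the restricted spaces, run the Newton--Kantorovich/contraction argument there, and identify the resulting zero with the one from Theorem~\ref{thm:zero_of_F} by local uniqueness. The only cosmetic difference is that the paper restricts the fixed-point operator $T$ to $B_{\X,\rho}(\bu,\bl)\cap \X_\radial$ directly rather than re-invoking the theorem on the subspace, a variant the paper itself mentions as equivalent.
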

\begin{proof}
Consider the subspace $X_{\radial}$ of $X$ made of functions $u\in X$ which do not depend on the angle variable $\psi$. Similarly, we introduce $\X_{\radial}$ and $\Y_\radial$, and we have that $\F$ maps $\X_\radial$ into $\Y_\radial$. Since $\Y_\radial$ is a closed subset of $\Y$, and since we assumed that $(\bu,\bl)\in\X_\radial$, the derivative $\F'(\bu,\bl)$ also maps $\X_\radial$ into $\Y_\radial$. An argument similar to the one used in the proof of Theorem~\ref{thm:zero_of_F} shows that $\F'(\bu,\bl)$ is in fact an isomorphism between $\X_\radial$ and $\Y_\radial$. Therefore, as soon as $(\bu,\bl)\in\X_\radial$, the fixed point operator $T$ defined in~\eqref{eq:T} maps $\X_\radial$ into itself. In particular, from the proof of Theorem~\ref{thm:zero_of_F} we see that, for any $\rho$ satisfying~\eqref{eq:cond_fixed-point}, $T$ maps $B_{\X,\rho}(\bu,\bl)\cap \X_\radial$ into itself, and that it is contracting on this ball. Since $\X_\radial$ is closed in $\X$, the contraction mapping still applies and yields the existence of a unique zero $(u,\l)$ of $\F$ in $B_{\X,\rho}(\bu,\bl)\cap \X_\radial$, which corresponds to the zero of $\F$ described in Theorem~\ref{thm:zero_of_F} by uniqueness.
\end{proof}
\begin{remark}
Corollary~\ref{cor:1D} is mainly used for the sake of unifying the presentation and the code. Indeed, one could instead introduce one-dimensional versions of the spaces $\X$ and $\Y$, state an analog of Theorem~\ref{thm:zero_of_F} with these spaces, and derive the associated estimates in the following subsections as well. However, since we need the two-dimensional setup for $L^*$ whose  eigenfunction will depend on $\psi$, we choose to use it for $L$ as well, for which the corollary guarantees that the eigenfunction is in fact one-dimensional.
\end{remark}

\subsection{Obtaining a bound for the inverse of the derivative}
\label{sec:norm_inverse}

From now on, let us denote by $S$ the derivative of $\F$ at the approximate solution: 
\begin{equation} \label{OperatorS}
S [( u, \lambda)] := \mathcal{F}'(\bu, \bl)[( u, \lambda)] = \begin{pmatrix}[1.5]
L u - \bl u - \lambda \bu \\  \langle u,\bu\rangle_{L^2}
\end{pmatrix}.
\end{equation}
Our goal is to obtain an explicit constant $\kappa$ such that inequality~\eqref{eq:kappa} holds, which reads
\begin{equation*}
 \left\Vert ( u, \lambda) \right\Vert_\X \leq \kappa \left\Vert S[( u, \lambda)] \right\Vert_\Y \quad \forall~( u, \lambda)\in \X,
\end{equation*} 
or, at least formally for now, $\left\Vert  S^{-1}\right\Vert_{B(\Y,\X)}\leq \kappa$. This is usually the most challenging part of a computer-assisted proof based on an a posteriori fixed point argument, and two main strategies have been developed in order to tackle such problems. 

One of them, see e.g.~\cite{AriKocTer05,DayLesMir07,Ois95,WatKinNak20,Yam98}, consists in combining rigorous computations on a finite dimensional projection with a priori error estimates, which typically amounts to introducing an \emph{approximate inverse} of $S$ whose norm is easier to bound, and then to control the error between this approximate inverse and $S^{-1}$ itself. Because of the fact that the leading differential operator in $L$ and $L^*$, namely $\tD$, has non-constant coefficients, this strategy seems difficult to apply to our current problem. One option that might prove successful with this approach could be to use a discretization based on Zernike polynomials, as was done recently in~\cite{AriKoc19}, but we did not investigate this possibility.

The other approach, see e.g.~\cite{Plu91,TakLiuOis13}, consists in directly estimating the norm of the inverse of $S$ via rigorous eigenvalue bounds. Although this strategy has, to our knowledge, also never been applied before to a problem where the leading order operator --- typically a Laplacian or a bi-Laplacian --- has non-constant coefficients, it seems more amenable to this situation. Therefore we will follow this strategy in the remainder of Section~\ref{sec:validation}.

The starting point is the following. Introducing 
\begin{equation}
\label{eq:def_Z}
Z:= \left\{ u\in H^4(\Omega) \ |\ u\in X,\ L u\in X \right\} \qquad \text{and}\qquad \Z=Z\times\C
\end{equation}
we have, for all $(u,\l)\in\Z$,
\begin{align*}
\left\Vert S(u,\l) \right\Vert_\Y^2 &= \langle S(u,\l) ,S(u,\l) \rangle_\Y \\
&= \langle S^*S(u,\l) ,(u,\l) \rangle_\Y \\
&\geq \l_1(S^*S) \left\Vert (u,\l) \right\Vert_\Y^2,
\end{align*}
where $\l_1(S^*S)$ is the smallest eigenvalue of the self-adjoint operator $S^*S$. Therefore, if we manage to get an explicit lower bound on $\l_1(S^*S)$, we get a constant $\kappa_0$ such that
\begin{equation}
\label{eq:kappa0}
 \left\Vert ( u, \lambda) \right\Vert_\Y \leq \kappa_0 \left\Vert S[( u, \lambda)] \right\Vert_\Y \quad \forall~( u, \lambda)\in X.
\end{equation} 
Combining such a bound with a priori estimates of the form
\begin{equation}
\label{eq:apriori}
\Vert \tN u \Vert_{L^2} \lesssim \left\Vert S[( u, \lambda)] \right\Vert_\Y  \quad \text{and}\quad \Vert \tD u \Vert_{L^2} \lesssim \left\Vert S[( u, \lambda)] \right\Vert_\Y \qquad \forall~( u, \lambda)\in \X,
\end{equation}
we can then obtain~\eqref{eq:kappa} from~\eqref{eq:kappa0}.

Sections~\ref{sec:homotopy} to \ref{sec:eig_tD} are devoted to obtaining an explicit lower bound on $\l_1(S^*S)$, giving $\kappa_0$, and in Section~\ref{sec:kappa0tokappa} we derive the priori estimates~\eqref{eq:apriori} which finally yields $\kappa$.

\subsubsection{Homotopy and eigenvalue bounds}
\label{sec:homotopy}

In this section, we describe the so-called \emph{homotopy method}~\cite{Goe87,Plu90}, which is the key ingredient we use to get an explicit lower bound on $\l_1(S^*S)$. As we will use this argument several times, we do not write it explicitly for the operator $S^*S$ but present it in a slightly more general way. Nonetheless, we tailor the presentation to our specific need, and refer the reader to~\cite[Chapter 10]{NakPluWat19} for a more general description of these techniques, and for the proofs of Propositions~\ref{prop:RR},~\ref{prop:LM} and~\ref{prop:LM_bis}. Before proceeding further, we point out that there are other existing techniques to rigorously compute eigenvalues of self-adjoint operators, at least on bounded domains (see e.g. \cite{CanDusMadStaVoh20, Liu15}).

In this subsection, we consider a densely defined self-adjoint operator $\SS$ on a separable Hilbert space $\XX$,
and assume that its spectrum only consists of eigenvalues, which accumulate only at $+\infty$. This is the operator for which we want rigorous bounds on the eigenvalues. We further assume there exists a family of operators $\SS^{(s)}$, with the same properties, and additionally
\begin{itemize}
\item we know the eigenvalues of $\SS^{(0)}$ exactly (this assumption will be slightly relaxed later on, see Remark~\ref{rem:choice_S0}),
\item the eigenvalues increase with $s$, that is, denoting by 
\begin{equation*}
\lambda^{(s)}_1 \leq \lambda^{(s)}_2 \leq \ldots \leq \lambda^{(s)}_n \leq \ldots
\end{equation*}
the eigenvalues of $\SS^{(s)}$ counted with multiplicity and arranged in ascending order, we assume that
\begin{equation}
\label{eq:monotonicity_homotopy}
\forall~0\leq s' \leq s \leq 1,\ \forall~n\geq 1, \qquad \lambda^{(s')}_n \leq \lambda^{(s)}_n,
\end{equation}
\item $\SS^{(1)}=\SS$.
\end{itemize} 
Under those assumptions, we can use the homotopy method to get rigorous enclosures on the eigenvalues of $\SS$, using the following steps.

The first step is to obtain rigorous upper bounds for the eigenvalues. More precisely, for any given $s\in[0,1]$ and $M\in\N$, we can get rigorous upper bounds for the $M$ smallest eigenvalues of $\SS^{(s)}$ via the well known Rayleigh–Ritz method, see e.g.~\cite[Theorem 10.12]{NakPluWat19}.
\begin{proposition}
\label{prop:RR}
Let $x_1,\ldots,x_M\in D(\SS^{(s)})$ be linearly independent, define the matrices
\begin{equation*}
A_0 = \left(\langle x_i,x_j \rangle\right)_{1\leq i,j\leq M} \qquad\text{and}\qquad A_1 = \left(\langle \SS^{(s)}x_i,x_j \rangle\right)_{1\leq i,j\leq M},
\end{equation*}
and let
\begin{equation*}
\ol_1 \leq \ldots \leq \ol_M,
\end{equation*}
be the eigenvalues of the generalized eigenvalue problem
\begin{equation}
\label{eq:eig_RR}
A_1 v = \ol A_0 v.
\end{equation}
Then, for all $m\leq M$
\begin{equation*}
\l_m^{(s)} \leq \ol_m.
\end{equation*}
\end{proposition}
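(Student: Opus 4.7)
The plan is to reduce the inequality $\lambda_m^{(s)} \leq \ol_m$ to the classical min-max (Courant--Fischer--Weyl) variational characterization of eigenvalues. Because $\SS^{(s)}$ is self-adjoint with spectrum consisting only of eigenvalues accumulating solely at $+\infty$, the spectral theorem provides a smallest eigenvalue and a complete orthonormal eigenbasis, and a standard argument gives
\[
\lambda_m^{(s)} \;=\; \min_{\substack{V\subseteq D(\SS^{(s)})\\ \dim V = m}}\ \max_{x\in V\setminus\{0\}}\ \frac{\langle \SS^{(s)} x, x\rangle}{\langle x, x\rangle}.
\]
It therefore suffices to produce a single $m$-dimensional trial subspace on which the Rayleigh quotient is uniformly bounded above by $\ol_m$.

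I would build such a subspace by simultaneously diagonalizing the pair $(A_1, A_0)$. The assumed linear independence of $x_1,\ldots,x_M$ makes $A_0$ Hermitian positive definite, and $A_1$ is Hermitian since $\SS^{(s)}$ is self-adjoint; hence there exist $v^{(1)}, \ldots, v^{(M)}\in\C^M$ solving $A_1 v^{(i)} = \ol_i\, A_0 v^{(i)}$ and normalizable so that $(v^{(k)})^* A_0 v^{(i)} = \delta_{ik}$. Define the trial functions $y_i := \sum_{j=1}^M v^{(i)}_j x_j \in D(\SS^{(s)})$. Expanding and using the definitions of $A_0$ and $A_1$ yields
\[
\langle y_i, y_k\rangle \;=\; (v^{(k)})^* A_0 v^{(i)} \;=\; \delta_{ik},\qquad \langle \SS^{(s)} y_i, y_k\rangle \;=\; (v^{(k)})^* A_1 v^{(i)} \;=\; \ol_i\,\delta_{ik},
\]
so the $y_i$ are orthonormal in $\XX$ and diagonalize the restriction of $\SS^{(s)}$ to their span.

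Setting $V_m := \spn\{y_1, \ldots, y_m\}$, any nonzero $x = \sum_{i=1}^m c_i y_i \in V_m$ satisfies $\langle \SS^{(s)} x, x\rangle = \sum_{i=1}^m \ol_i |c_i|^2 \leq \ol_m \sum_{i=1}^m |c_i|^2 = \ol_m \langle x, x\rangle$, using that the $\ol_i$ are arranged in ascending order. Inserting $V_m$ into the min-max formula above gives $\lambda_m^{(s)} \leq \ol_m$, as required. The only point deserving real care is justifying the min-max characterization in the unbounded-operator setting, but this is a textbook consequence of the assumed spectral structure of $\SS^{(s)}$; the remaining argument is essentially linear algebra, so no substantial obstacle is expected.
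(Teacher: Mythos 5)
Your argument is correct and is precisely the standard Rayleigh--Ritz (Poincar\'e/min-max) proof: the paper itself does not prove Proposition~\ref{prop:RR} but defers to \cite[Theorem 10.12]{NakPluWat19}, and your simultaneous diagonalization of the pencil $(A_1,A_0)$ followed by the easy direction of the Courant--Fischer characterization is exactly the argument behind that citation. The only point to note is the one you already flag: the min-max inequality $\l_m^{(s)}\leq \max_{x\in V\setminus\{0\}}\langle \SS^{(s)}x,x\rangle/\langle x,x\rangle$ for an $m$-dimensional $V\subset D(\SS^{(s)})$ is legitimate here because the assumed spectral structure (purely discrete spectrum, bounded below, accumulating only at $+\infty$) gives a complete orthonormal eigenbasis, so no gap remains.
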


\begin{remark}
\label{rem:homotopy}
In order to get sharp bounds, the vectors $x_1,\ldots,x_M$ should be chosen to be good numerical approximations of the eigenvectors of $\SS^{(s)}$ associated to the $M$ smallest eigenvalues of $\SS^{(s)}$. For these bounds to be rigorous, one has to
\begin{itemize}
\item make sure that the approximate eigenvectors $x_i$ exactly belong to the domain $D(\SS^{(s)})$ of $\SS^{(s)}$,
\item rigorously compute the entries of $A_0$ and $A_1$,
\item rigorously solve the eigenvalue problem~\eqref{eq:eig_RR}.
\end{itemize}
For the specific problems we are interested in here (see Sections~\ref{sec:base_problem} and~\ref{sec:eig_tD}), this can be easily done with interval arithmetic, at least as long as $M$ is not too large (see Section~\ref{sec:rig_kappa} for more details). Notice that the computation of $x_1,\ldots,x_M$ can and should be done first with usual floating-point arithmetic.
\end{remark}

\begin{remark}
In this work, making sure that the approximate eigenvectors $x_i$ exactly belong to the domain of $\SS^{(s)}$ is an easy task, because of our choice of numerical method (see again Section~\ref{sec:numerics}). However, for situations where this would be a troublesome requirement, we point out the existence of an elegant workaround: the so-called Goerisch extension~\cite{BehGoe94} (see also~\cite[Section 10.2.3]{NakPluWat19}).
\end{remark}

The second step is to obtain rigorous lower bounds for the eigenvalues. The following method from Lehmann and Maehly, see e.g.~\cite[Theorem 10.14]{NakPluWat19}, can be used to do so, and more precisely to obtain lower bounds on the $M$ first eigenvalues of $\SS^{(s)}$, assuming some a priori knowledge on the "next" eigenvalue $\l^{(s)}_{M+1}$.

\begin{proposition}
\label{prop:LM}
Repeat the assumptions of Proposition~\ref{prop:RR}. Assume further that there exists $\nu\in\R$ such that
\begin{equation}
\label{eq:cond_LM}
\ol_M <\nu \leq \l^{(s)}_{M+1},
\end{equation}
define the matrices
\begin{equation*}
A_2 = \left(\langle \SS^{(s)}x_i,\SS^{(s)}x_j \rangle\right)_{1\leq i,j\leq M},\qquad B_1=A_1-\nu A_0 \quad\text{and}\quad B_2 = A_2 -2\nu A_1 + \nu^2 A_0,
\end{equation*}
let
\begin{equation*}
\mu_1 \leq \ldots \leq \mu_M
\end{equation*}
be the eigenvalues of the generalized eigenvalue problem
\begin{equation}
\label{eq:eig_LM}
B_1 v = \mu B_2 v,
\end{equation}
and assume that $\mu_M<0$. Then, for all $m\leq M$
\begin{equation*}
\ul_m \leq \l_m^{(s)},
\end{equation*}
where
\begin{equation*}
\ul_m := \nu + \frac{1}{\mu_{M+1-m}}.
\end{equation*}
\end{proposition}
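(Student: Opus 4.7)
The plan is to translate the finite-dimensional matrix problem into a min-max statement for the shifted operator $T := \SS^{(s)} - \nu$. For any $x=\sum_i \alpha_i x_i \in V:=\spn(x_1,\ldots,x_M)$, a direct computation yields $\alpha^* A_0\alpha = \|x\|^2$, $\alpha^* A_1\alpha = \langle \SS^{(s)}x,x\rangle$ and $\alpha^* A_2\alpha = \|\SS^{(s)}x\|^2$, so that
\begin{equation*}
\alpha^* B_1\alpha = \langle Tx,x\rangle,\qquad \alpha^* B_2\alpha = \|Tx\|^2.
\end{equation*}
Combining the hypothesis $\ol_M<\nu$ with Proposition~\ref{prop:RR} applied to $\SS^{(s)}$ on $V$ already gives $\l^{(s)}_M \leq \ol_M<\nu$ and shows that $B_1$ is negative definite; a congruence argument then forces all generalized eigenvalues $\mu_j$ to be negative (consistent with the assumption $\mu_M<0$) and $T$ to be injective on $V$, so that $W_V := T(V)$ is an $M$-dimensional subspace of $\XX$.

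I would first handle the generic case $\nu<\l^{(s)}_{M+1}$. Then $T$ is boundedly invertible and $K:=T^{-1}$ is a bounded self-adjoint operator on $\XX$ whose spectrum $\{1/(\l^{(s)}_k-\nu)\,:\,k\geq 1\}$ consists of $M$ negative eigenvalues $\{1/(\l^{(s)}_k-\nu)\}_{k\leq M}$ together with a positive sequence accumulating at $0^+$. In particular, those $M$ negative eigenvalues sit at the bottom of the spectrum of $K$, and the $j$-th smallest eigenvalue of $K$ is $1/(\l^{(s)}_{M+1-j}-\nu)$ for $1\leq j\leq M$. For $y=Tx\in W_V$, the Rayleigh quotient of $K$ at $y$ equals $\alpha^* B_1\alpha/\alpha^* B_2\alpha$, so the generalized eigenvalues $\mu_1\leq\cdots\leq\mu_M$ coincide with the eigenvalues of the compression $P_{W_V}KP_{W_V}$. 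The standard Courant--Fischer interlacing comparison then gives
\begin{equation*}
\mu_j \ \geq\ \frac{1}{\l^{(s)}_{M+1-j}-\nu},\qquad 1\leq j\leq M,
\end{equation*}
and since both sides are negative this rearranges to $\l^{(s)}_{M+1-j}\geq \nu+1/\mu_j$, which with $m=M+1-j$ is exactly the claimed bound $\l^{(s)}_m\geq\ul_m$.

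The boundary case $\nu=\l^{(s)}_{M+1}$ can be reached by continuity, letting $\nu'\to\nu^-$ in the bound obtained for $\nu'<\l^{(s)}_{M+1}$: the matrices $B_1(\nu')$, $B_2(\nu')$ and hence the eigenvalues $\mu_j(\nu')$ depend continuously on $\nu'$, and $\mu_j(\nu)<0$ keeps the limit safely away from singularities. I expect the main obstacle to be the careful bookkeeping of signs and orderings --- one has to verify simultaneously that $\nu$ sits strictly above $\l^{(s)}_M$ so that $K$ has exactly $M$ negative eigenvalues, that these form the bottom of $K$'s spectrum rather than being intermingled with the positive tail accumulating at $0$, and that the Courant--Fischer comparison is applied to precisely that negative part. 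Once these pieces are aligned, the proof reduces to a standard Rayleigh--Ritz interlacing performed in the spectral variable $1/(\l-\nu)$ rather than $\l$ itself.
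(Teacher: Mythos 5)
Your proof is correct, and it is worth noting that the paper itself does not prove Proposition~\ref{prop:LM} at all: it is quoted verbatim from the Lehmann--Maehly theory, with a pointer to \cite[Theorem 10.14]{NakPluWat19} (and the adaptation in Proposition~\ref{prop:LM_bis} is likewise delegated to \cite[Theorems 10.10 and 10.14]{NakPluWat19}). What you have written is a complete, self-contained rendition of exactly that classical argument: pass to $K=(\SS^{(s)}-\nu)^{-1}$, observe that under $\ol_M<\nu\leq\l^{(s)}_{M+1}$ the spectrum of $K$ consists of the $M$ negative numbers $1/(\l^{(s)}_k-\nu)$, $k\leq M$, lying below the essential spectrum $\{0\}$ and a positive tail accumulating at $0^+$, identify the generalized eigenvalues $\mu_j$ of the pencil $(B_1,B_2)$ with the eigenvalues of the compression of $K$ to the trial space $T(V)$, and apply the min-max comparison in the transformed spectral variable before inverting the (order-reversing, sign-preserving) map $\mu\mapsto\nu+1/\mu$. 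The two points that deserve to be spelled out are ones you essentially have: (i) $B_2$ is positive definite, equivalently $T=\SS^{(s)}-\nu$ is injective on $V$, because $Tx=0$ for a nonzero $x\in V$ would force the Rayleigh quotient of $x$ to equal $\nu>\ol_M$, contradicting Proposition~\ref{prop:RR}'s variational characterization of $\ol_M$ (this is what makes your congruence argument and the well-posedness of the pencil legitimate, and it also shows that the stated hypothesis $\mu_M<0$ is automatic here); and (ii) the endpoint case $\nu=\l^{(s)}_{M+1}$, where $T$ is not invertible, which you correctly recover by letting $\nu'\uparrow\nu$ and using that $B_2(\nu)$ stays positive definite so the $\mu_j(\nu')$ converge to the strictly negative $\mu_j(\nu)$. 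So there is no gap; your write-up in effect supplies the proof the paper outsources to the literature.
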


In practice, one has to get sharp enough upper bounds in Proposition~\ref{prop:RR} first, otherwise $\ol_M>\l_{M+1}^{(s)}$ such that there is no hope of satisfying assumption~\eqref{eq:cond_LM}. Even assuming that $\ol_M<\l_{M+1}^{(s)}$, it may be challenging to find an explicit $\nu$ for which we can ensure $\nu \leq \l_{M+1}^{(s)}$: we do not have any lower bound at this point and that is precisely what we are trying to get with Proposition~\ref{prop:LM}. This is where the monotonicity~\eqref{eq:monotonicity_homotopy} of the homotopy plays a crucial role, because it allows us to get crude lower bounds on the eigenvalues of $\SS^{(s)}$ if we already control the eigenvalues of $\SS^{(s')}$ for some $s'<s$.

A somewhat informal description of how the whole procedure should look like, assuming one wants to get rigorous bounds on the $\M$ smallest eigenvalues of $\SS$, is given in Algorithm~\ref{alg:homotopy}.

\medskip

\begin{algorithm}[hp!]
 \textbf{Initialization:} Numerically compute a rough guess of the largest eigenvalue $\l^{(1)}_\M$ for which we want to get a rigorous lower bound in the end, and fix some $M$ large enough so that $\l^{(0)}_M$ is larger than this guess. Define $s_0=0$ and take $k=0$. \emph{At this stage we have rigorous lower and upper bounds for the $M-k$ first eigenvalues of $\SS^{(s_k)}$} (since we assumed we knew the eigenvalues of $\SS^{(0)}$).\\
 \While{$k<M-1$ and $s_k<1$}{
	\begin{itemize}
	\item Numerically find $\tilde{s}_{k+1}$ such that $\l^{(\tilde{s}_{k+1})}_{M-(k+1)}\approx \l^{(s_k)}_{M-k}$.
	\item Take $s_{k+1}$ slightly smaller than $\tilde{s}_{k+1}$, and compute rigorous upper bounds for the $ M-(k+1)$ first eigenvalues of $\SS^{(s_{k+1})}$ using Proposition~\ref{prop:RR}. Check that $\ol_{M-(k+1)}^{(s_{k+1})}<\l^{(s_k)}_{M-k}$. Otherwise reduce $s_{k+1}$, and repeat this step.
	\item Once we have $\ol_{M-(k+1)}^{(s_{k+1})}<\l^{(s_k)}_{M-k}$, by the monotonicity of the homotopy we have $\ol_{M-(k+1)}^{(s_{k+1})}<\l^{(s_{k+1})}_{M-k}$, and, using Proposition~\ref{prop:LM}, we can compute rigorous lower bounds of $\l^{(s_{k+1})}_1,\ldots,\l^{(s_{k+1})}_{M-(k+1)}$. 
	\item Do $k=k+1$. \emph{At this stage we have rigorous lower and upper bounds for the $M-k$ first eigenvalues of $\SS^{(s_k)}$.} 
	\end{itemize}
 }
 \textbf{Termination:} If we reach $k=M-1$, we have to start the whole procedure again with a larger $M$. Otherwise, once we reach $s_k=1$, we exit with rigorous bounds for the $M-k$ first eigenvalues of $\SS^{1}$. If $M-k$ is smaller than the number $\M$ of eigenvalues that we wanted, we also have to start the whole procedure again with a larger $M$.
 \vspace{0.2cm}
 \caption{The homotopy method.}
 \label{alg:homotopy}
\end{algorithm}

\begin{remark}
Note that, in practice, we do not need to know all the eigenvalues of $\SS^{(0)}$ but only the $M$ smallest, or more precisely rigorous and explicit lower bounds on the $M$ smallest eigenvalues of $\SS^{(0)}$.
\end{remark}

\begin{remark}
During the homotopy, that is for any $s_{k+1}<1$, we do not actually need to obtain rigorous lower bounds for all the eigenvalues $\l^{(s_{k+1})}_1,\ldots,\l^{(s_{k+1})}_{M-(k+1)}$, but only for $\l^{(s_{k+1})}_{M-(k+1)}$, which is the only lower bound that will be required for the next homotopy step. The following modification of Proposition~\ref{prop:LM} can then be useful, allowing us to avoid the a posteriori validation of most eigenvalues of~\eqref{eq:eig_LM}. 
\end{remark}

\begin{proposition}
\label{prop:LM_bis}
Repeat the assumptions of Proposition~\ref{prop:RR}. Assume further that there exists $\nu\in\R$ such that
\begin{equation*}
\ol_M <\nu \leq \l^{(s)}_{M+1},
\end{equation*}
define the matrices
\begin{equation*}
A_2 = \left(\langle \SS^{(s)}x_i,\SS^{(s)}x_j \rangle\right)_{1\leq i,j\leq M},\qquad B_1=A_1-\nu A_0 \quad\text{and}\quad B_2 = A_2 -2\nu A_1 + \nu^2 A_0,
\end{equation*}
and let
\begin{equation*}
\mu<0
\end{equation*}
be an eigenvalue of the generalized eigenvalue problem
\begin{equation*}
B_1 v = \mu B_2 v.
\end{equation*}
Then, 
\begin{equation*}
\ul_M \leq \l_M^{(s)},
\end{equation*}
where
\begin{equation*}
\ul_M := \nu + \frac{1}{\mu}.
\end{equation*}
\end{proposition}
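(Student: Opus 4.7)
The plan is to revisit the Lehmann--Maehly argument that underpins Proposition~\ref{prop:LM} and observe that its conclusion about $\l_M^{(s)}$ can be extracted from any single negative Ritz value, rather than from the full list $\mu_1 \leq \ldots \leq \mu_M$. The first step will be to reinterpret the generalized eigenvalue problem $B_1 v = \mu B_2 v$ as a Rayleigh--Ritz problem for the bounded self-adjoint operator $T := (\SS^{(s)} - \nu)^{-1}$. Setting $x = \sum_i v_i x_i$ and $y = (\SS^{(s)} - \nu)x$, a direct computation gives $v^* B_1 v = \langle y, T y\rangle$ and $v^* B_2 v = \|y\|^2$, so each eigenvalue $\mu$ of $B_1 v = \mu B_2 v$ is a Ritz value of $T$ on the $M$-dimensional subspace $Y_M := (\SS^{(s)} - \nu)\,\spn(x_1, \ldots, x_M)$.

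Next I would identify the $M$ smallest eigenvalues of $T$ on the ambient Hilbert space. Under the hypothesis $\l_M^{(s)} \leq \ol_M < \nu \leq \l^{(s)}_{M+1}$, the operator $\SS^{(s)} - \nu$ has exactly $M$ negative eigenvalues, namely $\l_n^{(s)} - \nu$ for $n \leq M$, and nonnegative eigenvalues otherwise; hence $T$ has $M$ negative eigenvalues, with the rest of its spectrum nonnegative and accumulating at $0^+$. Because $\l_n^{(s)}$ increases toward $\nu$ as $n$ grows toward $M$, the ordering of the negatives reverses when one takes reciprocals: the $k$-th smallest eigenvalue of $T$ is $\sigma_k = 1/(\l_{M-k+1}^{(s)} - \nu)$ for $k = 1, \ldots, M$.

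The decisive step is then the min--max principle. Since restricting the Rayleigh quotient of $T$ to $Y_M$ can only raise the $k$-th min--max value, one has $\mu_k \geq \sigma_k = 1/(\l_{M-k+1}^{(s)} - \nu)$ for every $k \in \{1, \ldots, M\}$. Given a negative eigenvalue $\mu$ of $B_1 v = \mu B_2 v$, write $\mu = \mu_k$ for some such $k$; since both $\mu_k$ and $\l_{M-k+1}^{(s)} - \nu$ are then negative, inverting the inequality carefully (watching sign flips) yields $\l_{M-k+1}^{(s)} \geq \nu + 1/\mu_k$. Finally, the monotonicity $\l_M^{(s)} \geq \l_{M-k+1}^{(s)}$ of the ordered eigenvalues of $\SS^{(s)}$ delivers the desired bound $\l_M^{(s)} \geq \nu + 1/\mu = \ul_M$.

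The one point requiring genuine care is the bookkeeping of sign flips when inverting quantities across $\nu$ and across zero, but this is an algebraic nuisance rather than a real obstacle: the underlying machinery is identical to that of Proposition~\ref{prop:LM}. The actual improvement is that a single negative Ritz value already suffices to bound $\l_M^{(s)}$ from below, whereas the full statement of Proposition~\ref{prop:LM} required all of $\mu_1, \ldots, \mu_M$ to be negative in order to bound each of $\l_1^{(s)}, \ldots, \l_M^{(s)}$ simultaneously.
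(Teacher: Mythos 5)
Your proof is correct and follows essentially the same route as the paper, which simply adapts the textbook Lehmann--Maehly argument of \cite[Theorems 10.10 and 10.14]{NakPluWat19}: viewing $B_1v=\mu B_2v$ as the Rayleigh--Ritz problem for $\left(\SS^{(s)}-\nu\right)^{-1}$ on the subspace $(\SS^{(s)}-\nu)\,\spn(x_1,\ldots,x_M)$ and invoking min--max, a single negative Ritz value does bound $\lambda_M^{(s)}$ from below exactly as you describe. One small caveat: the hypothesis allows $\nu=\lambda_{M+1}^{(s)}$, in which case $\SS^{(s)}-\nu$ is not invertible (and $B_2$ may be only semidefinite), so strictly speaking you should either run the argument for $\nu'\in(\ol_M,\nu)$ and pass to the limit, or phrase it with spectral projections rather than the resolvent.
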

\begin{proof}
The proof of~\cite[Theorem 10.14]{NakPluWat19} can readily be adapted to this case, thanks to~\cite[Theorem 10.10]{NakPluWat19}.
\end{proof}

\subsubsection{Base problem for $S^*S$} 
\label{sec:base_problem}

Let us define $\S:=S^*S:\Z\subset\Y\to\Y$. We want to use the homotopy method described in the previous subsection to compute a rigorous lower bound of $\l_1(\S)$. In order to do so, we first need a so-called \emph{base problem},  i.e. another self-adjoint operator $\S^{(0)}$ which is \emph{simpler} than $\S$ in the sense that we know its spectrum, and for which the eigenvalues are proven to be smaller than the ones of $\S$. This subsection is devoted to the derivation of an appropriate base problem.

It will be convenient to use a block-notation to describe the various operators involved. For instance, introducing $A=L  - \bl \Id$ we write
\begin{equation} \label{OperatorSmatrix}
S  = \begin{pmatrix}[1.5]
A & - \bu \\  \bu^* & 0
\end{pmatrix},
\end{equation}
meaning that, for any $(u,\l)\in\X$
\begin{align*}
S[(u,\l)] &= \begin{pmatrix}[1.5]
A & - \bu \\  \bu^* & 0
\end{pmatrix}
\begin{pmatrix}[1.5]
u \\ \l
\end{pmatrix} \\
&= \begin{pmatrix}[1.5]
Au - \l\bu \\  \langle u,\bu\rangle_{L^2}
\end{pmatrix}.
\end{align*}
Note that we extend the notation ${}^*$ for the adjoint of an operator, and also use it on elements of $X$, $\bu^*$ being the map from $X$ to $\C$ defined by
\begin{equation*}
\bu^* u := \langle u,\bu\rangle_{L^2} \qquad \forall~u\in X.
\end{equation*}
The densely defined self-adjoint operator $\S:\Z\subset \Y\to\Y$ then writes
\begin{equation*}
\S = \begin{pmatrix}
A^* A +  \bu\bu^* & -A^*\bu \\
- (A^*\bu)^* & \bu^*\bu
\end{pmatrix}.
\end{equation*}

Actually, it will not be easy to directly compare $\S$ with the base problem $\S^{(0)}$ that we are going to introduce in the following, because they will not have the same domain. The appropriate view-point here is to compare the associated quadratic forms, which we introduce in the next lemma.

\begin{lemma} \label{lem:BandB0}
Let $\eta_\L \in (0,1)$, $\eta_{\S} \in (0,\left\Vert \bu \right\Vert_{L^2}^2)$, $h_0:=\inf_{\Omega} h$ for $h$ as in~\eqref{h}, $C_V$ the constant introduced in Appendix \ref{appx-elem_est} and
\begin{equation}
\label{eq:s210l}
\s_2:=(1-\eta_{\L})\frac{\sigma^4}{4}, \quad 
\s_1:=\frac{1-\eta_{\L}}{\eta_{\L}}C_V^2-\bl\sigma^2,\quad 
\s_0:=\bl^2+\bl h_0-\frac{1}{\eta_{\S}}\left\Vert A^*\bu \right\Vert_{L^2}^2,\quad 
\s_{\lambda}:=\left(\left\Vert \bu \right\Vert_{L^2}^2 -\eta_{\S}\right).
\end{equation}
Consider, the Hermitian sesquilinear forms $\B:\X\times\X\to\C$ and $\B^{(0)}:\X\times\X\to\C$ defined by
\begin{equation*}
\B\left[(u_1,\l_1),(u_2,\l_2)\right] := \langle S[(u_1,\l_1)],S[(u_2,\l_2)] \rangle_{\Y},
\end{equation*}
and 
\begin{equation*}
\B^{(0)}\left[(u_1,\l_1),(u_2,\l_2)\right] :=  \left(\s_2 \langle \tD u_1, \tD u_2 \rangle_{L^2} - \s_1 \langle \tN u_1, \tN u_2 \rangle_{L^2} + \s_0 \langle u_1,  u_2 \rangle_{L^2} \right) + \s_\l \l_1 \l_2^*.
\end{equation*}
Then, assuming $\bl\in\R$, we have
\begin{equation} \label{eq:S}
\B\left[(u,\l),(u,\l)\right]
\geq  \B^{(0)}\left[(u,\l),(u,\l)\right] \qquad\forall~(u,\l)\in\X.
\end{equation}
\end{lemma}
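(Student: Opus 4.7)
The strategy is to expand the quadratic form $\B[(u,\l),(u,\l)] = \|S[(u,\l)]\|_\Y^2$ using the block description~\eqref{OperatorSmatrix}, integrate by parts to convert second-order products into $\tN u$-products, and then apply Young's inequality twice — with weights $\eta_\S$ and $\eta_\L$ — calibrated so that the resulting lower bound reproduces the four quadratic pieces of $\B^{(0)}$ with constants $\s_2, \s_1, \s_0, \s_\l$. The whole argument is a bookkeeping exercise; the only genuinely nontrivial choice is how to split the cross term $\sigma^2\Re\langle\tD u, Vu\rangle_{L^2}$, and that choice determines the precise form of $\s_1$.

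First, I would drop the nonnegative scalar piece, so that
\[
\B[(u,\l),(u,\l)] = \|Au-\l\bu\|_{L^2}^2 + |\langle u,\bu\rangle_{L^2}|^2 \;\geq\; \|Au\|_{L^2}^2 - 2\Re\langle Au, \l\bu\rangle_{L^2} + |\l|^2\|\bu\|_{L^2}^2.
\]
Rewriting the mixed term as $-2\Re(\bar\l\langle u, A^*\bu\rangle_{L^2})$ and combining Cauchy-Schwarz with Young's inequality of weight $\eta_\S$,
\[
2\bigl|\Re(\bar\l\langle u, A^*\bu\rangle_{L^2})\bigr| \;\leq\; \eta_\S |\l|^2 + \eta_\S^{-1}\|A^*\bu\|_{L^2}^2\,\|u\|_{L^2}^2,
\]
which combines with $|\l|^2\|\bu\|_{L^2}^2$ to give precisely $\s_\l |\l|^2 = (\|\bu\|_{L^2}^2-\eta_\S)|\l|^2$, while depositing $-\eta_\S^{-1}\|A^*\bu\|_{L^2}^2\,\|u\|_{L^2}^2$ into the eventual coefficient of $\|u\|_{L^2}^2$.

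Next I would expand $\|Au\|_{L^2}^2$ with $A = \frac{\sigma^2}{2}\tD + V - \bl\Id$ into its six cross terms and invoke two integration-by-parts identities, both of which depend crucially on the boundary conditions built into $X$ (Dirichlet at $r\in\{\rmin,\rmax\}$ and $2\pi$-periodicity in $\psi$):
\[
\langle \tD u, u\rangle_{L^2} = -\|\tN u\|_{L^2}^2, \qquad 2\Re\langle Vu, u\rangle_{L^2} = -\langle hu, u\rangle_{L^2},
\]
the second following from $V = f\partial_r + g\partial_\psi$ together with $h = \partial_r f + \partial_\psi g$ from~\eqref{h}. These turn $-\bl\sigma^2\Re\langle\tD u, u\rangle_{L^2}$ into $\bl\sigma^2\|\tN u\|_{L^2}^2$, and $-2\bl\Re\langle Vu, u\rangle_{L^2}$ into $\bl\langle hu, u\rangle_{L^2} \geq \bl h_0\|u\|_{L^2}^2$; combined with the direct $\bl^2\|u\|_{L^2}^2$ and the earlier $-\eta_\S^{-1}\|A^*\bu\|_{L^2}^2\|u\|_{L^2}^2$, this yields the full coefficient $\s_0$ of $\|u\|_{L^2}^2$.

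The decisive step is the remaining cross term $\sigma^2\Re\langle\tD u, Vu\rangle_{L^2}$, which I would \emph{not} estimate directly against $\|\tN u\|_{L^2}^2$. Instead I trade it against $\|\tD u\|_{L^2}^2$ and $\|Vu\|_{L^2}^2$, with Young's weight tuned to absorb exactly the right fraction of $\|\tD u\|_{L^2}^2$:
\[
\sigma^2\Re\langle\tD u, Vu\rangle_{L^2} \;\geq\; -\eta_\L\frac{\sigma^4}{4}\|\tD u\|_{L^2}^2 - \frac{1}{\eta_\L}\|Vu\|_{L^2}^2.
\]
Together with the $\frac{\sigma^4}{4}\|\tD u\|_{L^2}^2$ and the $\|Vu\|_{L^2}^2$ already present, this produces
\[
(1-\eta_\L)\frac{\sigma^4}{4}\|\tD u\|_{L^2}^2 - \frac{1-\eta_\L}{\eta_\L}\|Vu\|_{L^2}^2,
\]
and only at this final step do I invoke the a priori bound $\|Vu\|_{L^2} \leq C_V\|\tN u\|_{L^2}$ from Appendix~\ref{appx-elem_est} to replace the $\|Vu\|^2$ residue by $\frac{1-\eta_\L}{\eta_\L}C_V^2\|\tN u\|_{L^2}^2$, delivering exactly the $\|\tN u\|_{L^2}^2$-coefficient $-\s_1 = \bl\sigma^2 - \frac{1-\eta_\L}{\eta_\L}C_V^2$. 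Assembling the four coefficients yields $\s_2\|\tD u\|_{L^2}^2 - \s_1\|\tN u\|_{L^2}^2 + \s_0\|u\|_{L^2}^2 + \s_\l|\l|^2$, as claimed. The main subtlety to watch for is exactly this asymmetric Young split: trading the $\langle\tD u, Vu\rangle$ cross term against $\|Vu\|^2$ rather than directly against $\|\tN u\|^2$ is what yields the sharper factor $(1-\eta_\L)/\eta_\L$ in $\s_1$ instead of the crude $1/\eta_\L$, and it is this sharpness that is needed to leave enough room for the eigenvalue-homotopy arguments of Section~\ref{sec:homotopy} to succeed.
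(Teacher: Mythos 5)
Your proof is correct and takes essentially the same route as the paper's: the same Young inequality with weight $\eta_\S$ on the $\lambda$-cross term, the same Young split of $\sigma^2\Re\langle \tD u, Vu\rangle_{L^2}$ against $\eta_\L\tfrac{\sigma^4}{4}\Vert\tD u\Vert_{L^2}^2+\tfrac{1}{\eta_\L}\Vert Vu\Vert_{L^2}^2$, and the same final use of $\Vert Vu\Vert_{L^2}\leq C_V\Vert \tN u\Vert_{L^2}$. The only difference is presentational: the paper expands $\langle Au,Au\rangle_{L^2}$ in two stages via $A=L-\bl\,\Id$ and the identity $L+L^*=\sigma^2\tD-h$, while you expand $A=\tfrac{\sigma^2}{2}\tD+V-\bl\,\Id$ directly and make the two integration-by-parts identities explicit, which is the same computation.
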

\begin{proof}
From the definition of $S$, we have, for all $(u,\l)\in\X$,
\begin{align*}
\B[(u,\lambda),(u,\lambda)] =  \left(\langle Au,Au \rangle_{L^2} +  \left\vert\langle u,\bu \rangle_{L^2}\right\vert^2 + \left\Vert \bu \right\Vert_{L^2}^2\vert\lambda\vert^2 -2\Re\left(\langle u, A^*\bu \rangle_{L^2}\lambda^*\right)\right).
\end{align*}
Using that $\left\vert\langle u,\bu \rangle_{L^2}\right\vert^2$ is non negative, and estimating $\Re\left(\langle u, A^*\bu \rangle_{L^2}\lambda^*\right)$ with Young's inequality, we obtain
\begin{align}
\label{eq:B_to_A}
\B[(u,\lambda),(u,\lambda)] \geq  \left(\langle Au,Au \rangle_{L^2} -\frac{1}{\eta_{\S}}\left\Vert A^*\bu \right\Vert_{L^2}^2 \langle u,u \rangle_{L^2} +\left(\left\Vert \bu \right\Vert_{L^2}^2 -\eta_{\S}\right)\vert\lambda\vert^2\right).
\end{align}
Let us now focus on $ \langle Au, Au \rangle_{L^2}$. Recalling that $\bl$ is assumed to be real, we have
\begin{align}
\label{eq:A_to_L}
\langle Au, Au \rangle_{L^2} &= (L  - \bl \Id)u,(L  - \bl \Id)u\rangle_{L^2} \nonumber\\
&= \langle Lu, Lu \rangle_{L^2} +\bl^2 \langle u,u\rangle_{L^2}  -  \bl \Re\left(\langle(L^*  + L )u,u\rangle_{L^2}\right) \nonumber\\
&= \langle Lu, Lu \rangle_{L^2} +\bl^2 \langle u,u\rangle_{L^2}  - \bl \left(\langle \sigma^2\tD u - hu,u\rangle_{L^2}\right) \nonumber\\
&= \langle Lu, Lu \rangle_{L^2} + \bl  \sigma^2 \langle  \tN u,\tN u \rangle_{L^2} +\bl^2 \langle u,u\rangle_{L^2}  + \bl  \left\langle h u, u \right\rangle_{L^2} \nonumber\\
&\geq \langle Lu, Lu \rangle_{L^2} + \bl  \sigma^2 \langle  \tN u,\tN u \rangle_{L^2} +\left(\bl^2 + \bl h_0\right) \langle u,u\rangle_{L^2}.
\end{align}
Finally, recalling that $L=\frac{\sigma^2}{2}\tD + V$ and using Lemma~\ref{lem:basic_est}, we estimate
\begin{align}
\label{eq:L_to_end}
 \langle Lu,Lu \rangle_{L^2} &\geq \frac{\sigma^4}{4} \langle \tD u,\tD u \rangle_{L^2} + \langle Vu,Vu \rangle_{L^2} - \sigma^2\sqrt{\langle \tD u,\tD u \rangle_{L^2}}\sqrt{\langle Vu,Vu \rangle_{L^2}} \nonumber\\
&\geq (1-\eta_{\L})\frac{\sigma^4}{4} \langle \tD u,\tD u \rangle_{L^2} - \left(\frac{1}{\eta_{\L}}-1\right) \langle Vu,Vu \rangle_{L^2} \nonumber\\
&\geq (1-\eta_{\L})\frac{\sigma^4}{4} \langle \tD u,\tD u \rangle_{L^2} - \frac{1-\eta_L}{\eta_{\L}}C_V^2 \langle \tN u,\tN u \rangle_{L^2}.
\end{align}
Combining~\eqref{eq:B_to_A},~\eqref{eq:A_to_L} and~\eqref{eq:L_to_end}, we end up with
\begin{align*}
\B[(u,\lambda),(u,\lambda)] &\geq  \left( (1-\eta_{\L})\frac{\sigma^4}{4} \langle \tD u,\tD u \rangle_{L^2} - \left(\frac{1-\eta_L}{\eta_{\L}}C_V^2 - \bl  \sigma^2 \right) \langle  \tN u,\tN u \rangle_{L^2} \right. \\
 &  \left. \qquad\quad  +\left(\bl^2 + \bl h_0 -\frac{1}{\eta_{\S}}\left\Vert A^*\bu \right\Vert_{L^2}^2\right) \langle u,u \rangle_{L^2} +\left(\left\Vert \bu \right\Vert_{L^2}^2 -\eta_{\S}\right)\vert\lambda\vert^2 \right),
\end{align*}
which finishes the proof.
\end{proof}

We are now almost ready to obtain the base problem $\S^{(0)}$, and the family of operators whose eigenvalues will satisfy~\eqref{eq:monotonicity_homotopy}, but first we need to introduce a few more spaces. For all $s\in[0,1]$, we define
\begin{equation*}
Z^{(s)}:= \left\{ u\in H^4(\Omega) \ \Big\vert\ u\in X,\ \left(sL +(1-s)\frac{2\s_2}{\sigma^2}\tD\right) u\in X \right\} \qquad \text{and}\qquad \Z^{(s)}=Z^{(s)}\times\C.
\end{equation*}
Note that $Z^{(1)}$ is nothing but the space $Z$ introduced in~\eqref{eq:def_Z}. Finally, consider the self-adjoint operator $\S^{(0)}:\Z^{(0)}\subset \Y\to \Y$ defined by
\begin{equation} \label{eq:S0}
\S^{(0)} :=
\begin{pmatrix}
\s_2\tD^2  +\s_1\tD +\s_0 & 0 \\
0 & \s_\lambda
\end{pmatrix},
\end{equation}
with $\s_2$, $\s_1$, $\s_0$ and $\s_\l$ as in~\eqref{eq:s210l} and, for all $s\in[0,1]$, the self-adjoint operator $\S^{(s)}:\Z^{(s)}\subset \Y\to \Y$ defined by
\begin{equation} \label{eq:Ss}
\S^{(s)} := s\S + (1-s)\S^{(0)}.
\end{equation}
For all $s\in[0,1]$, we denote the eigenvalues of $\S^{(s)}$ by
\begin{equation*}
\lambda^{(s)}_1 \leq \lambda^{(s)}_2 \leq \ldots \leq \lambda^{(s)}_n \leq \ldots\,.
\end{equation*}
We can now prove that the family $\S^{(s)}$ gives a suitable homotopy between $\S^{(0)}$ and $\S$.
\begin{proposition}
\label{prop:eig_sesq}
Repeat the assumptions of Lemma~\ref{lem:BandB0}, and assume $\bu\in X$. Then the eigenvalues $\l_n^{(s)}$ increase with $s$, i.e.~\eqref{eq:monotonicity_homotopy} holds.
\end{proposition}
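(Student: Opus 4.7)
The plan is to bypass the $s$-dependence of the operator domains $\Z^{(s)}$ by working entirely at the level of sesquilinear forms on the common space $\X$, and then apply the min--max (Courant--Fischer) principle. The point of Lemma~\ref{lem:BandB0} is precisely to set this up: it produces a quadratic form $\B^{(0)}$ associated with $\S^{(0)}$ which is dominated by the form $\B$ associated with $\S$, and both are defined on all of $\X$.

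First I would verify that the natural sesquilinear form associated with $\S = S^*S$ on its operator domain $\Z$ is indeed $\B$; this is immediate, since for $(u,\l)\in\Z$,
\begin{equation*}
\langle \S(u,\l),(u,\l)\rangle_\Y = \langle S(u,\l),S(u,\l)\rangle_\Y = \B[(u,\l),(u,\l)].
\end{equation*}
Similarly, for $(u,\l)\in\Z^{(0)}$, integrating by parts using the boundary conditions encoded in $X$ gives $\langle \tD^2 u,u\rangle_{L^2} = \langle \tD u,\tD u\rangle_{L^2}$ and $\langle \tD u,u\rangle_{L^2} = -\langle \tN u,\tN u\rangle_{L^2}$, so that $\langle \S^{(0)}(u,\l),(u,\l)\rangle_\Y = \B^{(0)}[(u,\l),(u,\l)]$. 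Both $\B$ and $\B^{(0)}$ are bounded below, symmetric and closed on $\X$, so by the standard Friedrichs/KLMN representation theorem each is the form of a unique self-adjoint operator; these must coincide with the self-adjoint operators $\S$ and $\S^{(0)}$ on their respective operator domains.

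Next, define $\B^{(s)} := s\B + (1-s)\B^{(0)}$ on $\X$. By the same representation theorem, $\B^{(s)}$ is the sesquilinear form of a unique self-adjoint operator on $\Y$ with form domain $\X$, and a direct computation on $\Z^{(s)}$ shows that this operator agrees with the $\S^{(s)}$ defined in \eqref{eq:Ss}. In particular the spectrum of $\S^{(s)}$ is determined by $\B^{(s)}$ alone, and the min--max principle applies uniformly in $s$:
\begin{equation*}
\l_n^{(s)} = \inf_{\substack{V\subset \X\\ \dim V = n}} \,\sup_{\substack{(u,\l)\in V\\ (u,\l)\neq 0}} \frac{\B^{(s)}[(u,\l),(u,\l)]}{\left\Vert (u,\l)\right\Vert_\Y^2}.
\end{equation*}

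Now the monotonicity \eqref{eq:monotonicity_homotopy} drops out. Indeed, for $0 \le s' \le s \le 1$ and any $(u,\l)\in \X$, Lemma~\ref{lem:BandB0} gives $\B[(u,\l),(u,\l)] \geq \B^{(0)}[(u,\l),(u,\l)]$, hence
\begin{equation*}
\B^{(s)}[(u,\l),(u,\l)] - \B^{(s')}[(u,\l),(u,\l)] = (s-s')\bigl(\B - \B^{(0)}\bigr)[(u,\l),(u,\l)] \geq 0,
\end{equation*}
so $\B^{(s')} \leq \B^{(s)}$ as quadratic forms on $\X$. Plugging into the min--max formula yields $\l_n^{(s')} \leq \l_n^{(s)}$ for every $n$. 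The main obstacle, and the reason the statement is non-trivial, is exactly the mismatch of operator domains $\Z^{(s)}$: one cannot compare $\S^{(s)}$ and $\S^{(s')}$ directly as operators, so it is essential to pass to the closed form $\B^{(s)}$ (which is defined on the $s$-independent space $\X$) and to identify $\S^{(s)}$ as its Friedrichs representative; once this identification is established, the monotonicity of the eigenvalues is a standard consequence of the min--max principle.
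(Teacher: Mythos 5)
Your proof is correct and follows essentially the same route as the paper: both pass from the operators $\S^{(s)}$ (with their $s$-dependent domains $\Z^{(s)}$) to the sesquilinear forms $\B^{(s)} = s\B + (1-s)\B^{(0)}$ on the common space $\X$, identify $\S^{(s)}$ as the associated self-adjoint operator (the paper via integration by parts and the definition of $Z^{(s)}$, you via the Friedrichs/KLMN representation theorem), and then deduce \eqref{eq:monotonicity_homotopy} from Lemma~\ref{lem:BandB0} through the min--max characterization of the eigenvalues in terms of the Rayleigh quotient of $\B^{(s)}$.
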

\begin{proof}
For any $s$ in $[0,1]$, we consider the Hermitian sesquilinear forms $\B^{(s)}:\X\times\X\to\C$ defined by
\begin{equation*}
\B^{(s)} = s\B + (1-s)\B^{(0)},
\end{equation*}
and note that $\S^{(s)}$ is the self-adjoint operator associated to $\B^{(s)}$, that is,
\begin{equation}
\label{eq:Bs_to_Ss}
\B^{(s)}\left[(u_1,\l_1),(u_2,\l_2)\right] = \langle \S^{(s)}(u_1,\l_1),(u_2,\l_2) \rangle_\Y \qquad \forall~(u_1,\l_1)\in\Z^{(s)},\ \forall~(u_2,\l_2)\in\X.
\end{equation}
Indeed, in order to obtain~\eqref{eq:Bs_to_Ss} from the definition of $\B^{(s)}$ we only have to integrate by parts, and check that all the boundary terms vanish: most of them do so simply by virtue of $\bu$, $u_1$ and $u_2$ being in $X$, and the remaining ones vanish if and only if the function $
\frac{\sigma^2}{2}sLu_1 +\s_2(1-s)\tD u_1$ also belongs to $X$, hence the definition of $Z^{(s)}$. 

Therefore, the eigenvalues of $\S^{(s)}$ can be expressed in terms of the Rayleigh quotient of $\B^{(s)}$ (see e.g. the proof of~\cite[Theorem 10.33]{NakPluWat19}):
\begin{equation}
\label{eq:Rayleigh_bilin}
\lambda_n^{(s)} = \inf_{\substack{H\subset \X \\ dim H= n}} \max_{(u,\l)\in H\setminus\{0\}} \frac{\B^{(s)}\left[(u,\l),(u,\l)\right]}{\langle (u,\l),(u,\l) \rangle_\Y}.
\end{equation}
However, by Lemma~\ref{lem:BandB0} we have
\begin{equation*}
\B^{(s')}\left[(u,\l),(u,\l)\right]
\leq  \B^{(s)}\left[(u,\l),(u,\l)\right] \qquad\forall~0\leq s'\leq s\leq 1,\ \forall~(u,\l)\in\X,
\end{equation*}
which, combined with~\eqref{eq:Rayleigh_bilin} indeed yields
\begin{equation*}
\l_n^{(s')}
\leq  \l_n^{(s)} \qquad\forall~0\leq s'\leq s\leq 1,\ \forall~n\geq 1. \qedhere
\end{equation*}
\end{proof}

In order to use the homotopy method as explained in Section~\ref{sec:homotopy}, with $\S^{(0)}$ as a base problem for $\S$, we need to know the eigenvalues of $\S^{(0)}$, or at least to get explicit lower bounds for a finite number of eigenvalues of $\S^{(0)}$. We explain how to get such bounds in the next subsection, where we use the homotopy method again, this time to enclose the eigenvalues of $\tD$.

\begin{remark}
\label{rem:choice_S0} 
We could have used a different base problem, say $\hat \S^{(0)}$, written in terms of $\Delta$ rather than $\tD$, for instance by using the estimates of Appendix~\ref{appx-elem_est}. Had we done so,  we would know exactly the eigenvalues of $\hat \S^{(0)}$ --- remember that $\Omega$ is a rectangle, therefore we know explicitly the spectrum of $\Delta$. However, $\hat \S^{(0)}$ would be in some sense further away from $\S$ than the $\S^{(0)}$ we have defined in~\eqref{eq:S0}; this is due to the fact that $\S$ does involve the operator $\tD$, and not $\Delta$. Therefore the homotopy between $\hat \S^{(0)}$ and $\S$ would be longer and more costly in practice. The extra work we have to do in the next subsection to rigorously compute eigenvalues of $\tD$ is therefore more than compensated by the fact that our $\S^{(0)}$ from~\eqref{eq:S0} yields a shorter homotopy.
\end{remark}

\subsubsection{Getting the eigenvalues of $\tD$ and of $\S^{(0)}$}
\label{sec:eig_tD}

As explained in the previous subsection, we need to know the eigenvalues of $\tD$, or at least a finite number of them, in order to know the smallest eigenvalues of $\S^{(0)}$~\eqref{eq:S0} and be able to initialize the homotopy method from $\S^{(0)}$ to $\S$. In order to do so, we can, in fact, also use the homotopy method. 

Indeed, defining $\tD^{(0)}:X\subset Y\to Y$ by
\begin{equation*}
\tD^{(0)} u := \frac{\partial^2 u}{\partial r^2} + \frac{4}{\rmax^2}\frac{\partial^2 u}{\partial \psi^2},
\end{equation*}
we have that
\begin{equation*}
\langle -\tD u,u\rangle_Y \geq \langle -\tD^{(0)} u,u\rangle_Y \qquad \forall~u\in X.
\end{equation*}
Note that $-\tD^{(0)}$ has constant coefficients, therefore its spectrum on the recantagular domain $\Omega$ can be computed by hand: The eigenvalues of $-\tD^{(0)}$ are given by 
\begin{equation*}
\tl^{(0)}_{k,n} = \left(\frac{k\pi}{\rmax-\rmin}\right)^2 + \left(\frac{2n}{\rmax}\right)^2 \qquad \forall~k\in\N_1,\ \forall~n\in\N_0,
\end{equation*}
where $\tl^{(0)}_{k,0}$ is of multiplicity one, and $\tl^{(0)}_{k,n}$, $n\neq 0$, of multiplicity two for all $k$. 
The corresponding eigenvectors are
\begin{equation*}
\tilde u^{(0)}_{k,n} = \sin\left(\frac{rk\pi}{\rmax-\rmin}\right)e^{in\psi} \qquad \forall~k\in\N_1,\ \forall~n\in\ZZ.
\end{equation*}

Therefore, defining $\tD^{(s)} := s\tD + (1-s)\tD^{(0)}$ for $s\in[0,1]$, we have all the ingredients we need to apply the homotopy method between $\tD^{(0)}$ and $\tD$, as described in Section~\ref{sec:homotopy}, and get rigorous enclosures on finitely many eigenvalues of $\tD$. Then, coming back to~\eqref{eq:S0}, we see that the eigenvalues of $\S^{(0)}$ are exactly given by
\begin{equation*}
\{ \s_2 \tl^2 - \s_1 \tl + \s_0 \ | \ \tl \text{ eigenvalue of }\tD \} \cup \{ \s_\lambda \}.
\end{equation*}
Remember that, in order to start the main homotopy --- the one from $\S^{(0)}$ to $\S$, that aims at obtaining a rigorous lower bound on the smallest eigenvalue $\l^{(0)}(\S)$ of $\S$ --- we need a rigorous lower bound on all the eigenvalues of $\S^{(0)}$ that are smaller than some threshold $\Gamma$, which in practice is typically taken slightly larger than a numerically obtained value of $\l^{(0)}(\S)$. Since $\s_2$ is positive, the set of $\tl$ such that $\s_2 \tl^2 - \s_1 \tl + \s_0 \leq \Gamma$ is bounded, and can be computed explicitely in practice once $\s_2$, $\s_1$, $\s_0$ are known. Therefore, a first homotopy from $-\tD^{(0)}$ to $-\tD$ can be used, as explained just above, to get rigorous enclosures of the finitely many eigenvalues $\tl$ of $-\tD$ such that $\s_2 \tl^2 - \s_1 \tl + \s_0 \leq \Gamma$. 

This then yields the finitely many eigenvalues of $\S^{(0)}$ that are below $\Gamma$, after having added $\s_{\lambda}$ if it was also below $\Gamma$, and we can finally use a second homotopy, from $\S^{(0)}$ to $\S$, to get a rigorous lower bound on $\l^{(0)}(\S)$.

\begin{remark}
In order to reduce the computational cost associated to rigorously enclosing the eigenvalues of $-\tD$, one could make use of the fact that the eigenfunctions of $-\tD$ are of the form
\begin{equation*}
v_n(r)e^{in\psi},
\end{equation*}
where $v_n$ solves a one-dimensional eigenvalue problem:
\begin{equation*}
v_n''(r) -\frac{4}{r^2}v_n(r) = \lambda v_n(r).
\end{equation*}
However, in order to simply the presentation, we did not take advantage of this reduction and choose to directly apply the homotopy method to $-\tD$.
\end{remark}

\subsubsection{Lifting the operator norm estimate from $\Y$ to $\X$}
\label{sec:kappa0tokappa}

In Sections~\ref{sec:homotopy} to~\ref{sec:eig_tD}, we have seen how to obtain a rigorous lower bound on the smallest eigenvalue $\l^{(0)}(\S)$ of $\S$, or equivalently how to get an explicit constant $\kappa_0$ such that~\eqref{eq:kappa0} holds. $\kappa_0$ controls the operator norm of $S^{-1}$, seen as an operator from $\Y$ to itself. However, in order to apply Theorem~\ref{thm:zero_of_F} we need a constant $\kappa$ satisfying~\eqref{eq:kappa}, i.e. we need to control the operator norm of $S^{-1}$ from $\Y$ to $\X$. This is done in the next lemma, which is inspired from~\cite[Sections 6.2.3 and 9.4.1.1]{NakPluWat19}.

\begin{lemma} \label{lem:kappa}
Recalling the weight $\xi_2$ in the definition of $\left\Vert \cdot\right\Vert_\X$ in Section~\ref{sec:notations}, let now $\theta,\xi_2>0$ be such that
\begin{equation}
\label{eq:cond_theta}
\xi_2\left(1+\frac{1}{\theta}\right)\left(\frac{2\left\Vert \bu \right\Vert_{L^2}}{\sigma^2}\right)^2 <1,
\end{equation}
and consider $\kappa_1, \kappa_2$ given by
\begin{equation} \label{eq:valueforkappa1}
\kappa_1 := \frac{1}{\sigma^2}\left(C_V\kappa_0 + \sqrt{C_V^2\kappa_0^2 + 2\sigma^2\kappa_0\left(1+\kappa_0(-\bl)_+\right)}\right),
\end{equation}
where $(-\bl)_+=\max(-\bl,0)$, and 
\begin{equation} \label{eq:valueforkappa2}
\kappa_2 := \frac{2}{\sigma^2} \left(1 + C_V\kappa_1 + \vert\bl\vert\kappa_0 \right).
\end{equation}
If $\kappa_0$ satisfies~\eqref{eq:kappa0}, then~\eqref{eq:kappa} holds with
\begin{equation} \label{eq:valueforkappa}
\kappa := \sqrt{\frac{\kappa_0^2+\kappa_1^2+(1+\theta)\xi_2\kappa_2^2}{1-\xi_2\left(1+\frac{1}{\theta}\right)\left(\frac{2\left\Vert \bu \right\Vert_{L^2}}{\sigma^2}\right)^2}},
\end{equation}
\end{lemma}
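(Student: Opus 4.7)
The plan is to control each component of $\|(u,\lambda)\|_\X^2 = \|u\|_{L^2}^2 + \|\tN u\|_{L^2}^2 + \xi_2\|\tD u\|_{L^2}^2 + |\lambda|^2$ separately in terms of $R := \|S[(u,\lambda)]\|_\Y$. The piece $\|u\|^2 + |\lambda|^2$ is immediately bounded by $\kappa_0^2 R^2$ via hypothesis~\eqref{eq:kappa0}. Writing $S[(u,\lambda)] = (w,\mu)$ with $w = Lu - \bl u - \lambda\bu$ and $\mu = \langle u,\bu\rangle_{L^2}$, so that $R^2 = \|w\|_{L^2}^2 + |\mu|^2$, the remaining task is to extract bounds on $\|\tN u\|_{L^2}$ and $\|\tD u\|_{L^2}$ from the identity
\begin{equation*}
\tfrac{\sigma^2}{2}\tD u \;=\; Lu - Vu \;=\; w + \bl u + \lambda\bu - Vu.
\end{equation*}

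For $\|\tN u\|$, I would pair this identity with $u$ and integrate by parts. Since $u\in X$ vanishes on $r = \rmin,\rmax$ and is periodic in $\psi$, the boundary terms vanish and $\|\tN u\|_{L^2}^2 = -\langle \tD u,u\rangle_{L^2}$. Taking real parts (the left-hand side being real), bounding $-\bl\|u\|^2 \le (-\bl)_+\|u\|^2$, estimating $|\langle Vu, u\rangle| \le \|Vu\|\|u\| \le C_V\|\tN u\|\|u\|$ via the constant from Appendix~\ref{appx-elem_est}, and crucially using the $\mathbb{R}^2$-Cauchy--Schwarz bound
\begin{equation*}
|\langle w,u\rangle_{L^2}| + |\lambda||\mu| \;\le\; \sqrt{\|w\|_{L^2}^2+|\mu|^2}\,\sqrt{\|u\|_{L^2}^2+|\lambda|^2} \;\le\; \kappa_0 R^2
\end{equation*}
(applied to the vectors $(\|w\|,|\mu|)$ and $(\|u\|,|\lambda|)$, followed by~\eqref{eq:kappa0}), I obtain the quadratic inequality
\begin{equation*}
\tfrac{\sigma^2}{2}\|\tN u\|^2 \;\le\; C_V \kappa_0 R\,\|\tN u\| + \kappa_0\bigl(1 + \kappa_0(-\bl)_+\bigr)R^2,
\end{equation*}
whose positive root is exactly $\kappa_1 R$ with $\kappa_1$ given by~\eqref{eq:valueforkappa1}.

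For $\|\tD u\|$, the triangle inequality applied to the identity for $\tfrac{\sigma^2}{2}\tD u$, together with $\|w\|\le R$, $\|u\|\le \kappa_0 R$, and $\|Vu\| \le C_V\|\tN u\|\le C_V \kappa_1 R$ (using the bound just established), yields
\begin{equation*}
\|\tD u\|_{L^2} \;\le\; \kappa_2 R + \tfrac{2\|\bu\|_{L^2}}{\sigma^2}|\lambda|,
\end{equation*}
with $\kappa_2$ as in~\eqref{eq:valueforkappa2}. The appearance of $\|\bu\|_{L^2}$ here, which cannot be further absorbed into $\kappa_2$, is precisely the reason for the auxiliary parameter $\theta$ in the statement.

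The assembly is then straightforward but delicate. Applying Young's inequality $(a+b)^2 \le (1+\theta)a^2 + (1+1/\theta)b^2$ to the above and summing the four pieces produces
\begin{equation*}
\|(u,\lambda)\|_\X^2 \;\le\; \bigl(\kappa_0^2+\kappa_1^2 + \xi_2(1+\theta)\kappa_2^2\bigr)R^2 + \xi_2\bigl(1+\tfrac{1}{\theta}\bigr)\Bigl(\tfrac{2\|\bu\|_{L^2}}{\sigma^2}\Bigr)^2 |\lambda|^2.
\end{equation*}
The hard part is this final $|\lambda|^2$ residual: it is self-referential, since $|\lambda|^2 \le \|(u,\lambda)\|_\X^2$. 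The smallness hypothesis~\eqref{eq:cond_theta} is exactly what is needed to absorb it into the left-hand side with a strictly positive coefficient $1 - \xi_2(1+1/\theta)(2\|\bu\|/\sigma^2)^2 > 0$, yielding precisely the $\kappa$ formula in~\eqref{eq:valueforkappa}. The main obstacle throughout is this coupling between $|\lambda|$ and $\|\tD u\|$ through $\bu$; everything else reduces to careful bookkeeping of Cauchy--Schwarz and Young applications.
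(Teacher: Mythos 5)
Your proposal is correct and takes essentially the same route as the paper's proof: the same quadratic inequality in $\|\tN u\|_{L^2}$ whose positive root yields $\kappa_1$, the same triangle-inequality estimate giving $\kappa_2\|S[(u,\lambda)]\|_\Y + \frac{2\|\bu\|_{L^2}}{\sigma^2}|\lambda|$, and the same Young-plus-absorption step in which hypothesis~\eqref{eq:cond_theta} absorbs the self-referential $|\lambda|^2$ term to produce~\eqref{eq:valueforkappa}. The only cosmetic difference is that you control the cross terms via the two-vector Cauchy--Schwarz bound $|\langle w,u\rangle_{L^2}|+|\lambda||\mu|\le \|S[(u,\lambda)]\|_\Y\|(u,\lambda)\|_\Y$, whereas the paper uses the exact cancellation of the conjugate pair $\lambda\langle\bu,u\rangle_{L^2}-\langle u,\bu\rangle_{L^2}\lambda^*$ under the real part (the motivation it gives for the normalization choice); both give the same $\kappa_0\|S[(u,\lambda)]\|_\Y^2$ bound and hence the same constants.
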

\begin{proof} 
We start by establishing the following two estimates:
\begin{equation}
\label{eq:kappa1}
\left\Vert \tN u \right\Vert_{L^2} \leq \kappa_1 \left\Vert S[( u, \lambda)] \right\Vert_\Y \qquad \forall~(u,\l)\in\X,
\end{equation}
and
\begin{equation}
\label{eq:kappa2}
\left\Vert \tD u \right\Vert_{L^2} \leq \kappa_2\left\Vert S[(u,\lambda)] \right\Vert_\Y + \frac{2\left\Vert \bu \right\Vert_{L^2}}{\sigma^2}\vert\lambda\vert \qquad \forall~(u,\l)\in\X.
\end{equation}

In order to get~\eqref{eq:kappa1}, we use the Cauchy-Schwarz inequality
\begin{equation*}
\left\Vert S[( u, \lambda)] \right\Vert_\Y \left\Vert ( u, \lambda) \right\Vert_\Y \geq \Re \left(\langle -S[( u, \lambda)] , ( u, \lambda) \rangle_\Y\right),
\end{equation*}
and observe that
\begin{align*}
\langle -S[( u, \lambda)] , ( u, \lambda) \rangle_\Y  &= \langle -Lu +\bl u + \lambda \bu, u \rangle_{L^2} -\langle u,\bu\rangle_{L^2}\lambda^* \\
&= \langle -\left(\frac{\sigma^2}{2}\tD u + Vu\right) +\bl  u + \lambda \bu, u \rangle_{L^2} - \langle u,\bu\rangle_{L^2}\lambda^* \\
&= \left(\frac{\sigma^2}{2}\left\Vert \tN u \right\Vert_{L^2}^2 -\langle Vu, u \rangle_{L^2} + \bl \left\Vert u \right\Vert_{L^2}^2\right) -\left(\langle u,\bu\rangle_{L^2}\lambda^*-\lambda \langle \bu,u \rangle_{L^2}\right).
\end{align*}
We point out that the fact that the last two terms above are the complex conjugate of each other, and therefore vanish when we take the real part, is the main reason behind our choice of normalization condition for $\F$ (see Remark~\ref{rem:normalization}). Taking the real part, and using the estimates of Appendix~\ref{appx-elem_est}, we get
\begin{align*}
\Re \left(\langle -S[( u, \lambda)] , ( u, \lambda) \rangle_\Y\right) &\geq \left(\frac{\sigma^2}{2}\left\Vert \tN u \right\Vert_{L^2}^2 - \left\Vert Vu \right\Vert_{L^2} \left\Vert u \right\Vert_{L^2} + \bl\left\Vert u \right\Vert_{L^2}^2\right)  \\
&\geq \left(\frac{\sigma^2}{2}\left\Vert \tN u \right\Vert_{L^2}^2 - C_V \left\Vert \tN u \right\Vert_{L^2} \left\Vert u \right\Vert_{L^2} + \bl\left\Vert u \right\Vert_{L^2}^2\right),
\end{align*}
and therefore
\begin{align*}
\left(\frac{\sigma^2}{2}\left\Vert \tN u \right\Vert_{L^2}^2 - C_V \left\Vert \tN u \right\Vert_{L^2} \left\Vert u \right\Vert_{L^2} + \bl\left\Vert u \right\Vert_{L^2}^2\right)  
 \leq \left\Vert S[( u, \lambda)] \right\Vert_\Y \left\Vert ( u, \lambda) \right\Vert_\Y.
\end{align*}
Using~\eqref{eq:kappa0}, we end up with 
\begin{equation*}
\frac{\sigma^2}{2}\left\Vert \tN u \right\Vert_{L^2}^2 -  C_V\kappa_0 \left\Vert S[( u, \lambda)] \right\Vert_\Y \left\Vert \tN u \right\Vert_{L^2}  - \kappa_0\left\Vert S[( u, \lambda)] \right\Vert_\Y^2  \leq 0,
\end{equation*}
and, using that the above expression is a quadratic polynomial in $\left\Vert \tN u \right\Vert_{L^2}$, we obtain~\eqref{eq:kappa1}.

In order to get~\eqref{eq:kappa2}, we start by writing
\begin{equation*}
\tD u = \frac{2}{\sigma^2}\left((L-\bl I)u -\lambda\bu -(Vu-\bl u-\lambda\bu)\right),
\end{equation*}
and estimate
\begin{align*}
\left\Vert \tD u \right\Vert_{L^2} &\leq \frac{2}{\sigma^2} \left(\left\Vert S[(u,\lambda)] \right\Vert_\Y + \left\Vert Vu \right\Vert_{L^2} + \vert\bl\vert \left\Vert u \right\Vert_{L^2} + \vert\lambda\vert \left\Vert \bu \right\Vert_{L^2} \right) \\
&\leq \frac{2}{\sigma^2} \left(\left\Vert S[(u,\lambda)] \right\Vert_\Y + C_V\left\Vert \tN u \right\Vert_{L^2} + \vert\bl\vert \left\Vert u \right\Vert_{L^2} + \vert\lambda\vert \left\Vert \bu \right\Vert_{L^2} \right) \\
&\leq \frac{2}{\sigma^2} \left(  C_V\kappa_1 + \vert\bl\vert \kappa_0 \right)\left\Vert S[(u,\lambda)] \right\Vert_\Y + \frac{2\left\Vert \bu \right\Vert_{L^2}}{\sigma^2}\vert\lambda\vert \\
&= \kappa_2\left\Vert S[(u,\lambda)] \right\Vert_\Y + \frac{2\left\Vert \bu \right\Vert_{L^2}}{\sigma^2}\vert\lambda\vert,
\end{align*}
which yields~\eqref{eq:kappa2}.

Summing up the obtained estimates we can write, for any $(u,\l)\in\X$ and any $\theta>0$,
\begin{align*}
\left\Vert u \right\Vert_{L^2}^2 + \vert\lambda\vert^2 &\leq \kappa_0^2 \left\Vert S[(u,\lambda)] \right\Vert_\Y^2, \\
\left\Vert \tN u \right\Vert_{L^2}^2  &\leq \kappa_1^2 \left\Vert S[(u,\lambda)] \right\Vert_\Y^2, \\
\left\Vert \tD u \right\Vert_{L^2}^2  &\leq (1+\theta)\kappa_2^2 \left\Vert S[(u,\lambda)] \right\Vert_\Y^2 +\left(1+\frac{1}{\theta}\right)\left(\frac{2\left\Vert \bu \right\Vert_{L^2}}{\sigma^2}\right)^2 \vert\lambda\vert^2.
\end{align*}
Therefore, we obtain
\begin{align*}
&\left\Vert u \right\Vert_{L^2}^2 +  \left\Vert \tN u \right\Vert_{L^2}^2 +\xi_2 \left\Vert \tD u \right\Vert_{L^2}^2 + \vert\lambda\vert^2 \leq \\
&\qquad\qquad \left(\kappa_0^2+\frac{}{}\kappa_1^2+(1+\theta)\xi_2\kappa_2^2\right)\left\Vert S[(u,\lambda)] \right\Vert_\Y^2 + \xi_2\left(1+\frac{1}{\theta}\right)\left(\frac{2\left\Vert \bu \right\Vert_{L^2}}{\sigma^2}\right)^2 \vert\lambda\vert^2,
\end{align*}
and~\eqref{eq:cond_theta} allows us to conclude the proof.
\end{proof}

\begin{remark}
In practice, we take $\theta=1$ and $\xi_2 = \frac{\sigma^4}{16\left\Vert \bu \right\Vert_{L^2}^2}$.
\end{remark}

\subsection{Modifications for $L^*$}
\label{sec:Lstar}

The procedure that we use to rigorously compute an eigenpair of $L^*$ is the same as the one we just presented for $L$. In this section we just outline the small changes that have to be made to the estimates, without repeating all the computations.

Let $(\bu,\bl)$ be a numerically computed approximate eigenpair of $L^*$. As in Section~\ref{sec:fixed-point}, our starting point is a map whose zeros correspond to eigenpairs of $L^*$, and we thus replace~\eqref{OperatorcalF} by $\mathcal F: \X \to \Y$ defined as
\begin{equation} 
\mathcal{F} [(u, \lambda)] := 
\begin{pmatrix}[1.5]
L^* u - \lambda u \\ 
\langle u,\bu\rangle_{L^2} -1
\end{pmatrix}.
\end{equation}
Theorem~\ref{thm:zero_of_F} still holds for this new $\F$, and we just have to slightly adapt the way we compute $\kappa$, by replacing every instance of $V$ by $V^*=-V-h$ in the computations of Sections~\ref{sec:base_problem} and~\ref{sec:kappa0tokappa}.

Firstly, the self-adjoint operator, whose smallest eigenvalue we have to rigorously enclose using the homotopy method in order to get $\kappa_0$, is now
\begin{equation*}
\S = \begin{pmatrix}
AA^* +  \bu\bu^* & -A\bu \\
- (A\bu)^* & \bu^*\bu
\end{pmatrix},
\end{equation*}
still with $A=L  - \bl \Id$.
Repeating the computations done in Lemma~\ref{lem:BandB0} for this slightly different $\S$, we still obtain a base problem $\S^{(0)}$ of the form~\eqref{eq:S0}, with
\begin{equation*}
\s_2:=(1-\eta^{(1)}_{\L}-\eta^{(0)}_{\L})\frac{\sigma^4}{4}, \quad 
\s_1:=\frac{C_V^2}{\eta_{\L}^{(1)}}-\bl\sigma^2,\quad 
\s_0:=\bl^2+\bl h_0-\frac{\left\Vert A\bu \right\Vert_{L^2}^2}{\eta_{\S}} - \frac{\left\Vert h \right\Vert_\infty^2}{\eta_{\L}^{(0)}},\quad 
\s_{\lambda}:=\left(\left\Vert \bu \right\Vert_{L^2}^2 -\eta_{\S}\right).
\end{equation*}

Secondly, once $\kappa_0$ has been obtained, we get $\kappa$ in the same fashion as in Lemma~\ref{lem:kappa}, the constant $\kappa_1$ and $\kappa_2$ now having to be defined as follows
\begin{equation*}
\kappa_1 := \frac{1}{\sigma^2}\left(C_V\kappa_0 + \sqrt{C_V^2\kappa_0^2 + 2\sigma^2\kappa_0\left(1+\kappa_0(h_0-\bl)_+\right)}\right),
\end{equation*}and 
\begin{equation*}
\kappa_2 := \frac{2}{\sigma^2} \left(1 + C_V\kappa_1 + \left\Vert h+\bl \right\Vert_\infty\kappa_0 \right).
\end{equation*}

\subsection{Validation of the correct eigenpairs} \label{sec:smallesteigenvalue}

We have just finished describing a procedure that allows us to rigorously compute an eigenpair $(\eta,\l_\eta)$ of $L$ and an eigenpair $(\phi,\l_\phi)$ of $L^*$. In order to use these eigenpairs to rigorously compute the conditioned Lyapunov exponent $\Lambda_c$ via formula~\eqref{Conditioned Lyapunov exponent} (or more precisely via formula~\eqref{eq:FKformula_cond}), we will prove a posteriori that we indeed have validated the correct eigenpairs. For that prupose, we will use the following insights.

As indicated in Section~\ref{sec:mainresult},  we know from \cite[Proposition 4]{MeleardVillemonais} that
$$\d \nu(r, \psi) = \phi(r,\psi) \d r \d \psi,$$
where
$$L^* \phi = \lambda_0 \phi, \ \phi = 0 \ \text{on } \partial E,$$
is the limiting quasi-stationary distribution with escape rate $\lambda_0 < 0$ such that
$$ \mathbb{P}_{\nu} (T > t) = e^{\lambda_0 t}. $$
Let further be $E:= \Omega=(\rmin,\rmax)\times(0,2\pi] \subset \mathbb{R}^3$ our domain of interest, seen as a cylinder in $\mathbb{R}^3$ with absorbing boundary $\{r=\rmin\} \cup \{r=\rmax\}$.
We obtain the following result directly from the literature:
\begin{proposition} \label{prop:eta}
\begin{enumerate}[(a)]
\item There exists a non-negative function $\eta$ on $E \cup \partial E$, positive on $E$ and vanishing on $\partial E$, defined by
\begin{equation} \label{eta}
 \eta (x) = \lim_{t \to \infty} \frac{\mathbb{P}_x(T > t)}{\mathbb{P}_{\nu}(T > t)} = \lim_{t \to \infty} e^{-\lambda_0 t} \mathbb{P}_x (T > t)\,,
\end{equation}
where $ \int \eta \, \rmd \nu =1$ and the convergence holds uniformly in $E \cup \partial E$. 

Furthermore, $\eta$ is a bounded eigenfunction the backward Kolmogorov operator~\eqref{eq:backwardKO}  with eigenvalue $\lambda_0$, i.e. 
$$ L \eta = \lambda_0 \eta \,.$$
\item Let $f \in \tilde{\mathcal E}$ be an eigenfunction of $ L$ for an eigenvalue $\lambda$, being constant on $\partial E$. Then either
\begin{enumerate} [(i)]
\item $\lambda = 0$ and $f$ is constant,
\item or $\lambda = \lambda_0$, $f = \left( \int f \, \rmd \nu \right) \eta$ and $f|_{\partial E} \equiv 0$,
\item or $\Re(\lambda) \leq \lambda_0 - \gamma$, $\int f \, \rmd \nu = 0$ and $f|_{\partial E} \equiv 0$, where $\gamma >0$ is the rate of convergence to the quasi-stationary distribution $\nu$, and, in particular, $f$ is not non-negative.
\end{enumerate}
\end{enumerate}
\end{proposition}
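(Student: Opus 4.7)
The plan is to deduce both parts directly from the general theory of quasi-stationary distributions for uniformly elliptic killed diffusions on bounded smooth domains, as developed e.g.\ in the survey \cite{cmm13,mv12} and the works of Champagnat--Villemonais. The essential analytic input is that the killed semigroup $(P_t^E)_{t\ge 0}$ acting on $L^2(E)$ (or $C_0(\bar E)$) is compact: this follows from uniform ellipticity of $\mathcal L$ on the bounded domain $E = (\rmin,\rmax)\times (0,2\pi]$, the smoothness of $\partial E$, and standard parabolic regularity. The Krein--Rutman theorem then yields that the principal eigenvalue $\lambda_0$ of $L$ is real, simple, and strictly separated from the rest of the spectrum by a gap $\gamma>0$, with a strictly positive principal eigenfunction vanishing on $\partial E$.

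For part (a), I would identify this principal eigenfunction with the right-hand side of~\eqref{eta}. Specifically, writing $\mathbb{P}_x(T>t) = P_t^E\mathbf{1}(x)$ and expanding $\mathbf{1}$ in the biorthogonal basis of eigenfunctions of $P_t^E$ and its adjoint, the contribution from the leading eigenvalue is $e^{\lambda_0 t}\left(\int \mathbf{1}\,\rmd\nu\right)\eta(x)=e^{\lambda_0 t}\eta(x)$, while all other contributions decay at least as $e^{(\lambda_0-\gamma)t}$. Dividing by $\mathbb{P}_\nu(T>t)=e^{\lambda_0 t}$ and using the spectral gap gives the limit together with uniform convergence on $\bar E$ (parabolic regularity upgrades $L^2$-convergence to uniform convergence). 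That $\eta$ is a bounded eigenfunction of $L$ with eigenvalue $\lambda_0$ then follows from passing to the limit $t\to\infty$ in the semigroup identity $e^{-\lambda_0(t+s)}P_{t+s}^E\mathbf{1} = e^{-\lambda_0 t}P_t^E(e^{-\lambda_0 s}P_s^E\mathbf{1})$, and positivity of $\eta$ on $E$ together with $\eta|_{\partial E}=0$ follows from the strong maximum principle applied to $\mathcal L$.

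For part (b), the three cases reflect the spectral decomposition of $L$ together with the assumption that $f$ is constant on $\partial E$. In case (i), $\lambda=0$, the uniqueness of solutions to the Dirichlet problem $Lg=0$, $g|_{\partial E} = c$ (ensured by the maximum principle, since $L$ has no zeroth order term and $L c=0$ for any constant $c$) forces $f$ to equal this constant on $\bar E$. In case (ii), simplicity of $\lambda_0$ gives that the eigenspace is spanned by $\eta$, and the projection coefficient is determined by the biorthogonality $\int\eta\,\rmd\nu=1$. Case (iii) combines the spectral gap $\Re(\lambda)\le\lambda_0-\gamma$ for the remaining eigenvalues with the orthogonality relation, which I would obtain by integrating $Lf=\lambda f$ against the quasi-stationary density $\phi$ and using $L^*\phi=\lambda_0\phi$ with $\phi|_{\partial E}=0$: because $f|_{\partial E}=0$ in this case, all boundary terms from integration by parts vanish and one gets $(\lambda-\lambda_0)\int f\,\rmd\nu=0$, forcing $\int f\,\rmd\nu=0$.

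The main bookkeeping obstacle is reconciling the two different boundary regimes that appear in the statement: eigenfunctions vanishing on $\partial E$ (cases (ii) and (iii), belonging to the natural domain of the killed generator) versus eigenfunctions that are merely constant on $\partial E$ (case (i), which lives in a larger function class). This is handled by noting that \emph{a priori} an eigenfunction constant on $\partial E$ with $\lambda\ne 0$ can, by subtracting the constant boundary value and compensating, be shown to either be forced to have $c=0$ on $\partial E$ or to be inconsistent with $Lf=\lambda f$; once this reduction is made the spectral theory of the killed semigroup takes over and the three alternatives exhaust all possibilities.
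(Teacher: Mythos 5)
Your route is genuinely different from the paper's: the paper does not reprove these facts at all, its proof being a citation of Champagnat--Villemonais \cite[Proposition 2.3 and Corollary 2.4]{CV16}, with applicability to this uniformly elliptic diffusion on the annulus checked via \cite[Section 5.3.2]{ccv16}, plus the remark that case (iii) carries over to complex eigenvalues by taking real parts. You instead rebuild the statement from the spectral theory of the killed semigroup (compactness, Krein--Rutman, spectral gap), which is a legitimate alternative in this bounded, uniformly elliptic setting and has the merit of being self-contained. Two points in part (a) need repair, though: since $L$ is not self-adjoint you cannot simply ``expand $\mathbf{1}$ in the biorthogonal basis of eigenfunctions'' (completeness of the eigensystem is not automatic); the correct statement uses only the rank-one principal spectral projection together with a growth bound $e^{(\lambda_0-\gamma)t}$ for the semigroup on the complementary invariant subspace, available because the Dirichlet semigroup is analytic and eventually compact. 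Also, uniform convergence of $e^{-\lambda_0 t}\mathbb{P}_x(T>t)$ up to $\partial E$, where $\eta$ vanishes, does not follow from interior parabolic regularity alone and needs a boundary argument (or, as in \cite{CV16}, follows from the uniform exponential convergence to the QSD).

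The genuine gap is in part (b): the entire content beyond standard Dirichlet spectral theory is the reduction from ``$f$ constant on $\partial E$'' to the trichotomy, and your proposed mechanism (``subtracting the constant boundary value and compensating'') does not deliver it. Indeed, $g=f-c$ solves $(L-\lambda)g=\lambda c$ with $g|_{\partial E}=0$, and pairing with the adjoint eigenfunction $\phi$ only gives $(\lambda_0-\lambda)\int g\phi = \lambda c$, which is no contradiction; nothing forces $c=0$ or $\lambda=0$ this way. The standard argument (and the one behind \cite{CV16}) is semigroup-theoretic: from $P_t f=e^{\lambda t}f$ write $P_t f(x)=\mathbb{E}_x\bigl[f(X_t)\mathbf{1}_{\{T>t\}}\bigr]+c\,\mathbb{P}_x(T\le t)$, use boundedness of $f$ and $\mathbb{P}_x(T>t)\to 0$ to conclude $e^{\lambda t}f(x)\to c$ for every $x$, which forces either $\lambda=0$ and $f\equiv c$, or $c=0$ (a purely imaginary or positive-real-part $\lambda$ would force $f\equiv 0$). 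Only after this reduction do your cases (ii) and (iii) apply; then the exponential convergence to $\nu$ at rate $\gamma$ (or your spectral gap, once you identify the two) yields $\Re(\lambda)\le\lambda_0-\gamma$, and $\int f\,\rmd\nu=0$ together with the positivity of the density of $\nu$ gives that $f$ is not non-negative, a point your sketch also leaves implicit. As written, the dichotomy between case (i) and the boundary-vanishing cases is asserted rather than proved.
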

\begin{proof}
This follows from \cite[Proposition 2.3 and Corollary 2.4]{CV16} which are applicable in our situation according to \cite[Section 5.3.2]{ccv16}. For statement (iii), we simply carry the proof over to the case with potentially complex eigenvalues and eigenfunctions, adopting the estimate for the real part.
\end{proof}
Hence, we obtain that a non-negative eigenfunction of $L$  vanishing at the boundary is necessarily associated to the largest non-zero eigenvalue $\lambda_0 < 0$.
Therefore, we can formulate the following statement which gives sufficient conditions, checkable in practice (see Section~\ref{sec:correct_eig_num} for the details), for having obtained the correct eigenpairs.

\begin{proposition}
\label{prop:correct_eig}
Let $(\eta,\lambda_\eta)\in\X$ be an eigenpair of $L$, and $(\phi,\lambda_\phi)\in\X$ be an eigenpair of $L^*$. If
\begin{equation}
\label{eq:cond_eta}
\eta(r,\psi) \geq 0 \quad \forall (r,\psi)\in\Omega,
\end{equation}
then $\lambda_{\eta}$ is the eigenvalue with largest real part $\lambda_0$ of $L$ and $\eta$ is exactly the  eigenfunction occurring in formula~\eqref{eq:FKformula_cond}. 
Assume further that 
\begin{equation}
\label{eq:cond_phi}
\langle \eta,\phi\rangle_{L^2} \neq 0,
\end{equation}
then also $\lambda_{\phi} = \lambda_0$ and $\phi$ is the eigenfunction of $L^*$ occurring in formula~\eqref{eq:FKformula_cond}.
\end{proposition}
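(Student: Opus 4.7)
The plan is to invoke Proposition~\ref{prop:eta} to identify $(\eta,\lambda_\eta)$, and then to use bi-orthogonality between eigenfunctions of $L$ and $L^*$ to identify $(\phi,\lambda_\phi)$. A recurring point is that membership in the space $\mathcal{X}$ already encodes the correct boundary conditions: both $\eta$ and $\phi$ vanish on the absorbing part of $\partial E$ and are periodic in $\psi$, so Proposition~\ref{prop:eta} and integration by parts apply without extra boundary contributions.

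To handle $\eta$, I would simply apply part (b) of Proposition~\ref{prop:eta}. Since $\eta \in X$, it vanishes on $\partial E$, hence is in particular constant there, and one of the alternatives (i)-(iii) must hold. Alternative~(i) is excluded because it would force $\eta \equiv 0$, contradicting that $\eta$ is an eigenfunction. Alternative~(iii) is excluded by hypothesis~\eqref{eq:cond_eta}, because it requires $\eta$ not to be non-negative. Therefore alternative~(ii) applies, yielding $\lambda_\eta = \lambda_0$ and identifying $\eta$, up to a positive scalar, with the principal eigenfunction appearing in~\eqref{eq:FKformula_cond}.

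For the eigenpair of $L^*$, the boundary conditions satisfied by elements of $X$ make all boundary terms in the integration by parts vanish, and we obtain
\begin{equation*}
\lambda_\eta \langle \eta,\phi\rangle_{L^2} = \langle L\eta,\phi\rangle_{L^2} = \langle \eta,L^*\phi\rangle_{L^2} = \lambda_\phi^{*}\, \langle \eta,\phi\rangle_{L^2}.
\end{equation*}
Combined with $\lambda_\eta = \lambda_0 \in \R$ and assumption~\eqref{eq:cond_phi}, this forces $\lambda_\phi = \lambda_0$. Then $\phi$ is an eigenfunction of $L^*$ at its principal eigenvalue, and the simplicity of $\lambda_0$ (inherited from its simplicity as an eigenvalue of $L$ given in Proposition~\ref{prop:eta}, via the standard duality between $L$ and $L^*$) guarantees that $\phi$ is a scalar multiple of the eigenfunction in~\eqref{eq:FKformula_cond}.

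The remaining point is the scaling. Formula~\eqref{eq:FKformula_cond} depends on $\eta$ and $\phi$ only through the product $\eta\phi$, which must integrate to~$1$ in order for $\eta\phi\,\d r\,\d\psi$ to be the quasi-ergodic probability measure $m$; hypothesis~\eqref{eq:cond_phi} allows exactly to fix this joint normalization. In my view, the main practical obstacle is not this short argument, but rather the a posteriori verification of the pointwise inequality~\eqref{eq:cond_eta} (and of~\eqref{eq:cond_phi}) from the rigorously enclosed $\mathcal{X}$-approximation of $\eta$ and $\phi$ provided by Theorem~\ref{thm:zero_of_F}, which will require an explicit embedding constant for $X\hookrightarrow C^0(\bar\Omega)$ --- the topic of Appendix~\ref{appx-embedding}.
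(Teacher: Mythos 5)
Your proposal is correct and follows essentially the same route as the paper: the first claim is obtained by applying Proposition~\ref{prop:eta}(b) and eliminating alternatives (i) and (iii), and the second claim by the identical duality computation $\lambda_0\langle\eta,\phi\rangle_{L^2}=\langle L\eta,\phi\rangle_{L^2}=\langle\eta,L^*\phi\rangle_{L^2}=\lambda_\phi^*\langle\eta,\phi\rangle_{L^2}$ together with~\eqref{eq:cond_phi}. Your additional remarks on simplicity, normalization and the practical verification of~\eqref{eq:cond_eta} go slightly beyond the paper's terse proof but are consistent with how these points are handled elsewhere in the text (Sections~\ref{sec:correct_eig_num} and Appendix~\ref{appx-embedding}).
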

\begin{proof}
The first part follows directly from Proposition~\ref{prop:eta}. To obtain the second part, simply notice that
\begin{align*}
\lambda_0 \langle \eta,\phi\rangle_{L^2} = \langle L\eta,\phi\rangle_{L^2} = \langle \eta,L^*\phi\rangle_{L^2} = \lambda_\phi^*\langle \eta,\phi\rangle_{L^2},
\end{align*}
thus~\eqref{eq:cond_phi} implies $\lambda_\phi = \lambda_0$.
\end{proof}


\section{Numerics and implementation}
\label{sec:numerics}

In this section we discuss the implementation of our strategy outlined in Section~\ref{sec:validation}, and present the obtained results. While we feel that the choices we have made here, in particular in terms of how to discretize the solution, are all well adapted to our precise problem, we emphasize that there is some freedom at the level of the implementation, and that different procedures could definitely be used.

\subsection{Discretization}

We represent elements of $X$ and $Z$ via Fourier-Chebyshev series
\begin{align}
\label{eq:FourCheb}
u(r,\psi) &= \sum_{n\in\ZZ} \left(u_{n,0} + 2\sum_{k=1}^{\infty} u_{k,n}(r) T_{\vert k\vert}(r)\right) e^{in\psi} \nonumber\\
&= \sum_{k,n\in\ZZ} u_{\vert k\vert,n} T_{\vert k\vert}(r) e^{in\psi},
\end{align}
where $T_k$ denotes the $k$-th Chebyshev polynomial of the first kind, rescaled from $[-1,1]$ to $[\rmin,\rmax]$. In practice, we naturally truncate the expansions. For instance, the approximate eigenvector $\bu$ of $L$ that we want to validate is of the form
\begin{align}
\label{eq:truncated}
\bu(r,\psi) &= \sum_{\vert k\vert <K}\sum_{\vert n\vert <N} u_{\vert k\vert,n} T_{\vert k\vert}(r) e^{in\psi},
\end{align}
for some given $N$ and $K$.

The choice of using a spectral method is motivated by the fact that the elements of $X$ and $Z$ that we need to approximate, namely eigenvectors of $L$ and $\S$, are much smoother than a typical element of $X$ or $Z$, and therefore they admit a representation of the form~\eqref{eq:FourCheb} with fast decaying coefficients $u_{k,n}$. For more background on Chebyshev series, see e.g.~\cite{Tre13}.

Notice also that, for a function $\bu$ of the form~\eqref{eq:truncated}, most of the operations involved in computing $L\bu$ or $\S\bu$, i.e.~taking derivatives in $r$ and $\psi$, being multiplied by functions like $g(r,\psi)$ or computing inner products, can be done easily and exactly in practice --- up to rounding errors, which we control using the Intlab package for interval arithmetic~\cite{Rum99}. The only exception concerns the terms of the form $\frac{1}{r}$ or $\frac{1}{r^2}$, which of course cannot be represented exactly using truncated Chebsyhev series. However, it is straightforward to approximate these terms on $[\rmin,\rmax]$ with high accuracy by truncated Chebyshev series, and to get tight and rigorous error bounds, for instance using a Newton-Kantorovich argument similar to Theorem~\ref{thm:zero_of_F} (see Appendix~\ref{appx-validation_inv_r} for more details).

We emphasize that most of the computations required in Section~\ref{sec:validation} can (and should) be done with usual floating point arithmetic, without worrying about rigorous error bounds. This is for instance the case when we find an approximate eigenpair for $L$ and $L^*$, or when we look for the approximate eigenvectors needed during the homotopy method (see Proposition~\ref{prop:RR} and Remark~\ref{rem:homotopy}). The only computations that have to be made rigorous, i.e.~where truncation error and rounding errors have to be explicitly controlled, are the ones that we use to verify the assumptions in Theorem~\ref{thm:zero_of_F} and Proposition~\ref{prop:correct_eig}. We give more details concerning theses rigorous computations in the following two subsections.

\subsection{Rigorous computation needed for Theorem~\ref{thm:zero_of_F}}
\label{sec:rig_kappa}

Most of the quantities that we need in Theorem~\ref{thm:zero_of_F}, such as $\delta$, are straightforward to compute rigorously using interval arithmetic. The single quantity whose rigorous computation is more involved is $\kappa$, and more specifically $\kappa_0$, which we will focus on in this subsection. In particular, in the process of obtaining rigorously a constant $\kappa_0$ satisfying~\eqref{eq:kappa0}, we use the homotopy method twice and we therefore have to:
\begin{enumerate}
\item\label{domain} Make sure that the approximate eigenvectors used in Propositions~\ref{prop:RR} and~\ref{prop:LM} exactly belong to the domain of the self-adjoint operator under consideration,
\item\label{A012} Make sure that the approximate eigenvectors used rigorously compute the entries of the matrices $A_0$, $A_1$, $A_2$, $B_1$ and $B_2$ in Propositions~\ref{prop:RR} and~\ref{prop:LM},
\item\label{verifyeigs} Rigorously solve the generalized eigenvalue problems~\eqref{eq:eig_RR} and~\eqref{eq:eig_LM} in order to get rigorous eigenvalue bounds.
\end{enumerate}

Let us focus on the case of the homotopy from $\tD^{(0)}$ to $\tD$, discussed in Section~\ref{sec:eig_tD}. The first point that has to be addressed, is that, according to Proposition~\ref{prop:RR}, we need the approximate eigenvectors $\bu^{(i)}$ to belong to $X$. The regularity requirements as well as the boundary conditions in $\psi$ are trivially satisfied by truncated Fourier-Chebyshev series of the form~\eqref{eq:truncated}. Therefore, having $\bu^{(i)}$ belonging to $X$ is equivalent to having $\bu^{(i)}$ satisfying the homogeneous Dirichlet boundary conditions in $r$. Even though we consider non-local representations of the solutions by using Chebyshev series, these conditions at the boundary are easy to enforce, because we can efficiently parametrize the subspace of elements of the form~\eqref{eq:truncated} which satisfy those boundary conditions. Indeed, just by making use of the fact that the (unrescaled) Chebyshev polynomials satisfy
\begin{equation*}
T_k(-1) = (-1)^k \qquad\text{and}\qquad T_k(1)=1 \qquad \forall~k\in\N,
\end{equation*}
the conditions $\bu^{(i)}(r_{min},\psi)=0=\bu^{(i)}(r_{max},\psi)$ rewrites
\begin{equation}
\label{eq:BC_impl}
\left\{
	\begin{aligned}
		&\sum_{\vert k\vert <K} \bu^{(i)}{_{\vert k\vert,n}} = 0 \\
		&\sum_{\vert k\vert <K} (-1)^k \bu^{(i)}{_{\vert k\vert,n}} = 0
	\end{aligned}
\right.
\qquad\qquad \forall~\vert n\vert <N.
\end{equation}
Therefore, for each Fourier mode $n$, we can parametrize the first two Chebyshev modes in terms of the other Chebyshev modes. In other words, the condition~\eqref{eq:BC_impl} is equivalent to having
\begin{equation}
\label{eq:BC_expl}
\left\{
	\begin{aligned}
		&\bu^{(i)}_{0,n} = -2\sum_{l=1}^{\lfloor \frac{K}{2}\rfloor} \bu^{(i)}_{2l,n} \\
		&\bu^{(i)}_{1,n} = -\sum_{l=1}^{\lfloor \frac{K-1}{2}\rfloor} \bu^{(i)}_{2l+1,n}
	\end{aligned}
\right.
\qquad\qquad \forall~\vert n\vert <N.
\end{equation}
In practice, when using the discretization~\eqref{eq:truncated}, we thus only consider the coefficients $\bu_{k,n}$ for $1\leq k <K$ and $\vert n\vert <N$ as unknowns, and define the remaining coefficients $\bu_{k,n}$ for $k\in\{0,1\}$ and $\vert n\vert <N$ via~\eqref{eq:BC_expl}. We then automatically get $\bu\in X$. This strategy can be easily generalized to tackle more boundary conditions, as in~\eqref{eq:def_Z}, which is required for the second homotopy. The only slight difference is that in the above case we could make the conversion between the implicit definition of some coefficients~\eqref{eq:BC_impl}, and the associated explicit definition~\eqref{eq:BC_expl} by hand (it basically amounts to inverting a $2\times 2$ system), whereas when we have more equations it becomes convenient do to the conversion using rigorous numerics instead.

Secondly, in order to rigorously compute the matrices $A_0$, $A_1$ and $A_2$, we need to rigorously evaluate quantities like
\begin{equation}
\label{eq:tD0_scal}
\langle \bu^{(i)},\bu^{(j)} \rangle_{L^2},\quad \langle \tD^{(0)}\bu^{(i)},\bu^{(j)} \rangle_{L^2},\quad \text{and}\quad \langle \tD^{(0)}\bu^{(i)},\tD^{(0)}\bu^{(j)} \rangle_{L^2},
\end{equation}
as well as
\begin{equation}
\label{eq:tD_scal}
\langle \bu^{(i)},\bu^{(j)} \rangle_{L^2},\quad \langle \tD\bu^{(i)},\bu^{(j)} \rangle_{L^2},\quad \text{and}\quad \langle \tD\bu^{(i)},\tD\bu^{(j)} \rangle_{L^2},
\end{equation}
where $\bu^{(i)}$ and $\bu^{(j)}$ are numerically computed approximate eigenvectors, represented by truncated Fourier-Chebyshev series of the form~\eqref{eq:truncated}, and belong to $X$ as explained above. Indeed, all the computations required for~\eqref{eq:tD0_scal} can be made exactly: since $\bu^{(i)}$ and $\bu^{(j)}$ are truncated Fourier-Chebyshev series, so are $\tD^{(0)}\bu^{(i)}$ and $\tD^{(0)}\bu^{(j)}$, as well as their products, and the integrals involved in the inner products can be computed. The output is then exact, up to potential rounding errors, which are explicitly controlled using interval arithmetic. For~\eqref{eq:tD_scal} we have to be slightly more careful, since the factor $\frac{1}{r^2}$ in $\tD$ cannot be represented exactly as a truncated Chebyshev series. However, as mentioned previously (see also Appendix~\ref{appx-validation_inv_r}), we can write 
\begin{equation*}
\frac{1}{r^2} = \varphi^{\inv}(r) + \epsilon^{\inv}(r) \qquad\forall~r\in[\rmin,\rmax],
\end{equation*}
where 
\begin{equation*}
\varphi^{\inv}(r) = \sum_{\vert k\vert <K} \varphi^{\inv}_{\vert k\vert} T_{\vert k\vert}(r),
\end{equation*}
is a truncated Chebyshev series whose coefficients we have computed explicitly, and $\epsilon^{\inv}$ is such that
\begin{equation*}
\sup_{r\in[\rmin,\rmax]} \left\vert \epsilon^{\inv}(r) \right\vert \leq \rho^{\inv},
\end{equation*}
where the error bound $\rho^{\inv}$ is also known explicitly (and very small, see Section~\ref{sec:results} for explicit numbers). We then introduce
\begin{equation*}
\bD = \frac{\partial^2}{\partial r^2} + \varphi^\inv(r) \frac{\partial^2}{\partial \psi^2} \qquad\text{and}\qquad \eD =  \epsilon^{\inv}(r) \frac{\partial^2}{\partial \psi^2},
\end{equation*}
so that
\begin{equation*}
\tD = \bD + \eD.
\end{equation*}
When having to compute $\langle \tD\bu^{(i)},\bu^{(j)} \rangle_{L^2}$ in~\eqref{eq:tD_scal}, we can thus use the splitting
\begin{equation*}
\langle \tD\bu^{(i)},\bu^{(j)} \rangle_{L^2} = \langle \bD\bu^{(i)},\bu^{(j)} \rangle_{L^2} + \langle \eD\bu^{(i)},\bu^{(j)} \rangle_{L^2},
\end{equation*}
where the first term can be compute exactly, and the second one can be estimated explicitly by
\begin{equation*}
\left\vert \langle \eD\bu^{(i)},\bu^{(j)} \rangle_{L^2} \right\vert \leq \rho^{\inv} \left\Vert \frac{\partial^2\bu^{(i)}}{\partial\psi^2} \right\Vert_{L^2} \left\Vert \bu^{(j)} \right\Vert_{L^2}.
\end{equation*}
Similarly, we write
\begin{equation*}
\langle \tD\bu^{(i)},\tD\bu^{(j)} \rangle_{L^2} = \langle \bD\bu^{(i)},\bD\bu^{(j)} \rangle_{L^2} + \langle \eD\bu^{(i)},\bD\bu^{(j)} \rangle_{L^2} + \langle \bD\bu^{(i)},\eD\bu^{(j)} \rangle_{L^2} + \langle \eD\bu^{(i)},\eD\bu^{(j)} \rangle_{L^2},
\end{equation*}
where the first term is computed exactly, and the rest is explicitly estimated as follows
\begin{align*}
&\left\vert \langle \eD\bu^{(i)},\bD\bu^{(j)} \rangle_{L^2} + \langle \bD\bu^{(i)},\eD\bu^{(j)} \rangle_{L^2} + \langle \eD\bu^{(i)},\eD\bu^{(j)} \rangle_{L^2} \right\vert \\
&\qquad\qquad \leq \rho^{\inv} \left(\left\Vert \frac{\partial^2\bu^{(i)}}{\partial\psi^2} \right\Vert_{L^2} \left\Vert \bD\bu^{(j)} \right\Vert_{L^2} + \left\Vert \frac{\partial^2\bu^{(j)}}{\partial\psi^2} \right\Vert_{L^2} \left\Vert \bD\bu^{(i)} \right\Vert_{L^2}\right) + \left(\rho^{\inv}\right)^2 \left\Vert \frac{\partial^2\bu^{(i)}}{\partial\psi^2} \right\Vert_{L^2} \left\Vert  \frac{\partial^2\bu^{(j)}}{\partial\psi^2} \right\Vert_{L^2}.
\end{align*}
We can therefore get rigorous enclosures of every coefficient of the matrices $A_0$, $A_1$ and $A_2$ (and thus also of $B_1$ and $B_2$). 

\begin{remark}
Once a rigorous enclosure of $A_0$, $A_1$ and $A_2$ has been computed, it is straightforward to obtain a rigorous enclosure of $B_2$, using interval arithmetic and the formula $B_2 = A_2 - 2\nu A_1 + \nu^2 A_0$ (see Proposition~\ref{prop:LM}). However, this formula is prone to cancellation errors, which can lead to rather large enclosures. Therefore, in practice we instead compute $B_2$ using the following formula
\begin{equation*}
B_2 = \left(\langle (\SS^{(s)}-\nu) x_i,(\SS^{(s)}-\nu) x_j \rangle\right)_{1\leq i,j\leq M},
\end{equation*}
for which we observed tighter enclosures.
\end{remark}

Finally, for the third point, in order to then rigorously solve the eigenproblems~\eqref{eq:eig_RR} and~\eqref{eq:eig_LM}, we use the built-in Intlab routine \texttt{verifyeig}.

\begin{remark}
From a theoretical point of view, the eigenproblem~\eqref{eq:eig_LM} is obviously equivalent (as soon as there is no zero eigenvalue) to the eigenproblem $B_2 v = \mu^{-1} B_1 v$, but in practice this last formulation seems better suited to rigorous validation via \texttt{verifyeig}, and so we used it instead of~\eqref{eq:eig_LM}. 
\end{remark}

The rigorous computations needed for the second homotopy --- the one from $\S^{(0)}$ to $\S$ --- are similar, and we omit the details.

\subsection{Rigorous computation needed for Proposition~\ref{prop:correct_eig}}
\label{sec:correct_eig_num}

We have seen in Section~\ref{sec:validation} how we could \emph{validate} an eigenpair of $L$ or $L^*$, and we just discussed the related implementation issues in the previous subsection. That is, given a numerically computed eigenpair $(\bar{\eta},\bl_\eta)$ of $L$, we can now prove the existence of an eigenpair $(\eta,\l_\eta)$ of $L$ such that 
\begin{equation*}
(\eta,\l_\eta) = (\bar\eta,\bl_\eta) + (\epsilon_\eta,\epsilon_{\lambda_\eta}),\quad \left\Vert (\epsilon_\eta,\epsilon_{\lambda_\eta}) \right\Vert_\X \leq \rho_\eta,
\end{equation*}
where $\rho_\eta$ is small and explicitly known. Similarly, we can validate a numerically computed eigenpair $(\bar{\phi},\bl_\phi)$ of $L^*$, with an error estimate of the form 
\begin{equation*}
(\phi,\l_\phi) = (\bar\phi,\bl_\phi) + (\epsilon_\phi,\epsilon_{\lambda_\phi}),\quad \left\Vert (\epsilon_\phi,\epsilon_{\lambda_\phi}) \right\Vert_\X \leq \rho_\phi.
\end{equation*}
In the following, we explain how Proposition~\ref{prop:correct_eig} can be applied in practice, ensuring that we have the correct eigenpairs needed for the rigorous computation of the Lyapunov exponent.

Condition~\eqref{eq:cond_phi} is straightforward to check in practice, using interval arithmetic. Indeed, we have
\begin{align*}
\langle \eta,\phi\rangle_{L^2} &= \langle \bar\eta,\bar\phi\rangle_{L^2} + \langle \bar\eta,\epsilon_\phi\rangle_{L^2} + \langle \epsilon_\eta,\bar\phi\rangle_{L^2} + \langle \epsilon_\eta,\epsilon_\phi\rangle_{L^2},
\end{align*}
and
\begin{align*}
\left\vert \langle \bar\eta,\epsilon_\phi\rangle_{L^2} + \langle \epsilon_\eta,\bar\phi\rangle_{L^2} + \langle \epsilon_\eta,\epsilon_\phi\rangle_{L^2} \right\vert &\leq \Vert \bar\eta\Vert_{L^2}\Vert \epsilon_\phi\Vert_{L^2} + \Vert \bar\phi\Vert_{L^2}\Vert \epsilon_\eta\Vert_{L^2} + \Vert \epsilon_\eta\Vert_{L^2}\Vert \epsilon_\phi\Vert_{L^2} \\
&\leq \Vert \bar\eta\Vert_{L^2}\rho_\phi + \Vert \bar\phi\Vert_{L^2}\rho_\eta + \rho_\eta\rho_\phi.
\end{align*}
Therefore, in order to prove that~\eqref{eq:cond_phi} holds, we only have to check that
\begin{equation*}
\vert \langle \bar\eta,\bar\phi\rangle_{L^2} \vert > \Vert \bar\eta\Vert_{L^2}\rho_\phi + \Vert \bar\phi\Vert_{L^2}\rho_\eta + \rho_\eta\rho_\phi,
\end{equation*}
which will be the case in practice as soon as the error bounds $\rho_\eta$ and $\rho_\phi$ are small enough.

Condition~\eqref{eq:cond_eta} is less straightforward to verify. The method we propose here is well adapted to the specific solutions we obtain, but we mention that the question of rigorously computing nonnegative solutions of elliptic PDEs has been investigated more generally in~\cite{Tan20}. We first consider a subdomain
\begin{equation}
\label{eq:Omega_eps}
\Omega_\varepsilon := (\rmin+\varepsilon,\rmax-\varepsilon)\times(0,2\pi)
\end{equation}
of $\Omega$ which stays safely away from the absorbing boundary. For some small but positive $\varepsilon$, we estimate
\begin{equation*}
\inf_{\Omega_\varepsilon} \eta \geq \inf_{\Omega_\varepsilon} \bar{\eta} - \left\Vert \epsilon_\eta \right\Vert_{C^0} 
\end{equation*} 
and check rigorously using interval arithmetic that
\begin{equation}
\label{eq:positivity_subdomain}
\inf_{\Omega_\varepsilon} \bar{\eta} - \left\Vert \epsilon_\eta \right\Vert_{C^0} > 0.
\end{equation}
In order to bound $\left\Vert \epsilon_\eta \right\Vert_{C^0}$ from above, we use the Sobolev embedding $H^2(\Omega)\hookrightarrow C^0(\bar\Omega)$
\begin{equation*}
\Vert u\Vert_{C^0} \leq \Upsilon_{X,C^0} \Vert u\Vert_X \qquad \forall~u\in X,
\end{equation*}
where the constant $\Upsilon_{X,C^0}$ is given explicitly in Appendix~\ref{appx-embedding}.

Note that~\eqref{eq:positivity_subdomain} cannot be true for $\varepsilon=0$, because the eigenfunction vanishes at the boundary. However, as illustrated in Figures~\ref{fig:eigenfunctions_rlarge} and~\ref{fig:eigenfunctions}, we observe that
\begin{equation*}
\frac{\partial \bar\eta}{\partial r}(r,\psi)>0
\end{equation*}
for $r$ close to $\rmin$, and that 
\begin{equation*}
\frac{\partial \bar\eta}{\partial r}(r,\psi)<0
\end{equation*}
for $r$ close to $\rmax$. Therefore, we would like to prove that 
\begin{equation*}
\frac{\partial \eta}{\partial r}(r,\psi) = \frac{\partial \bar\eta}{\partial r}(r,\psi) + \frac{\partial \epsilon_\eta}{\partial r}(r,\psi)>0
\end{equation*}
for all $(r,\psi)\in(\rmin,\rmin+\varepsilon)\times(0,2\pi)$ and get a similar estimate close to $\rmax$. Here is where we make use of Corollary~\ref{cor:1D}, and of the fact that $\eta$ is essentially a one-dimensional function, as it happens to be independent of the angle variable $\psi$. Indeed, $H^2$ is not embedded in $C^1$ in dimension 2, but it is in dimension 1, and we have
\begin{equation*}
\left\Vert \frac{\partial u}{\partial r}\right\Vert_{C^0} \leq \Upsilon_{X_\radial,C^1} \Vert u\Vert_X \qquad \forall~u\in X_\radial,
\end{equation*}
where the constant $\Upsilon_{X_\radial,C^1}$ is given explicitly in Appendix~\ref{appx-embedding}. Therefore we can compute an explicit lower bound for
\begin{equation*}
\inf_{r\in(\rmin,\rmin+\varepsilon]} \frac{\partial \bar\eta}{\partial r}(r,\psi) - \left\Vert \frac{\partial \epsilon_\eta}{\partial r}\right\Vert_{C^0},
\end{equation*}
and check that it is indeed non negative. Similarly, we check that
\begin{equation*}
\inf_{r\in[\rmax-\varepsilon,\rmax)} \frac{\partial \bar\eta}{\partial r}(r,\psi) - \left\Vert \frac{\partial \epsilon_\eta}{\partial r}\right\Vert_{C^0} \geq 0,
\end{equation*}
which allows us to conclude that~\eqref{eq:cond_eta} holds.

\subsection{Examples and validated results}
\label{sec:results}

In the following, we will fix $a=1$, $\alpha=1$ and $\beta=1$. We are going to vary the shear parameter $b$, the noise level $\sigma$, and the interval $[r_{min},r_{max}]$ in numerical simulations to demonstrate the parameter-dependent behavior in a broad range. Most of the calculations are not rigorous, i.e.~do not use the full homotopy method, due to reasons of running time. However, for particular parameter combinations, we run the complete algorithm as described in the previous sections in order to obtain the sign of the conditioned Lyapunov exponent $\Lambda_c$ rigorously.

\subsubsection{$[r_{min},r_{max}]=[0.75, 1.25]$}

When fixing the shear $b$, and numerically computing $\Lambda_c$ with respect to $\sigma$, we observe different behavior depending on the fixed value of $b$. 
In particular, Figure~\ref{fig:smalldomain} suggests that for large enough shear, there is a transition from negative conditioned Lyapunov exponent to positive  conditioned Lyapunov exponent. For large enough noise, there is also a second transition from positive to negative values of $\Lambda_c$. These findings are in accordance with the results and numerics in~\cite{DoanEngelLambRasmussen} and similar to the behavior described in \cite{engellambrasmussen19_1}, where a simplified model of a stochastically driven limit cycle is considered. The main difference of similar calculations for the latter model (see e.g.~\cite[Figure 1 (a)]{engellambrasmussen19_1} or the respective figures in~\cite{ly08}) is that the graph of the largest Lyapunov exponent $(\sigma, \Lambda_1(\sigma))$ shows no second extremum but increases monotonously after the minimum for small $\sigma$ has been passed, due to the far simpler structure of the model. To sum up, the behavior of $\Lambda_c$ is analogous to what we can expect numerically from the first Lyapunov exponent $\Lambda_1$ for the global model: for large enough fixed $b$, the Lyapunov exponent depends smoothly on $\sigma$, firstly decreasing from $0$ to a minimum and then increasing up to a positive number such that a change of sign occurs for some critical $\sigma^*(b)$, indicating a two-parameter bifurcation.
\begin{figure}[h!]
\begin{center}
\subfloat[\label{fig:smalldomain_b25}$b=2.5$]{
\begin{overpic}[width=0.4\linewidth]{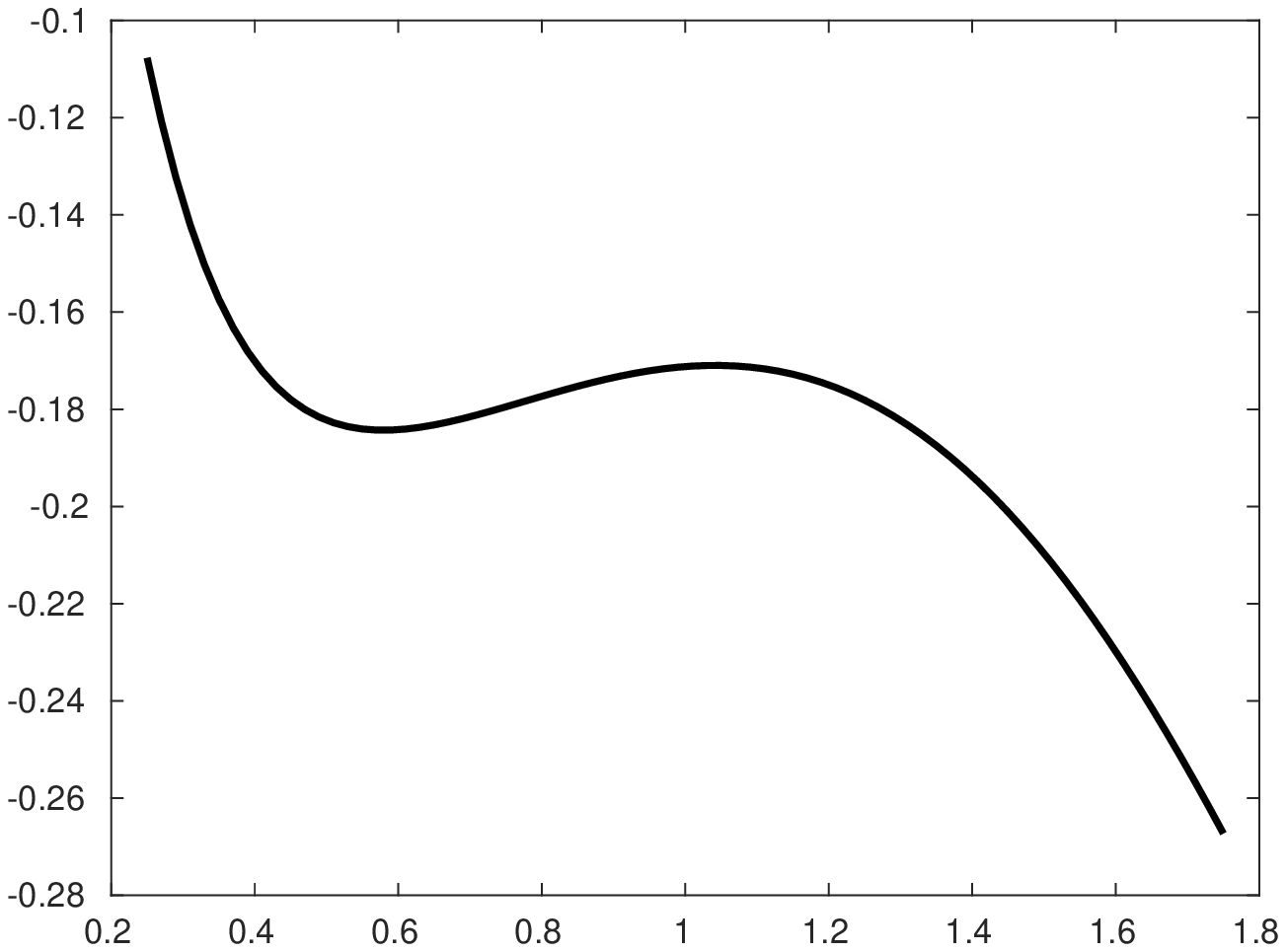}
\put(50,0){\scriptsize $\sigma$}
\put(-5,35){\scriptsize $\Lambda_c$}
\end{overpic} 
}
\subfloat[\label{fig:smalldomain_b3}$b=3$]{
\begin{overpic}[width=0.4\linewidth]{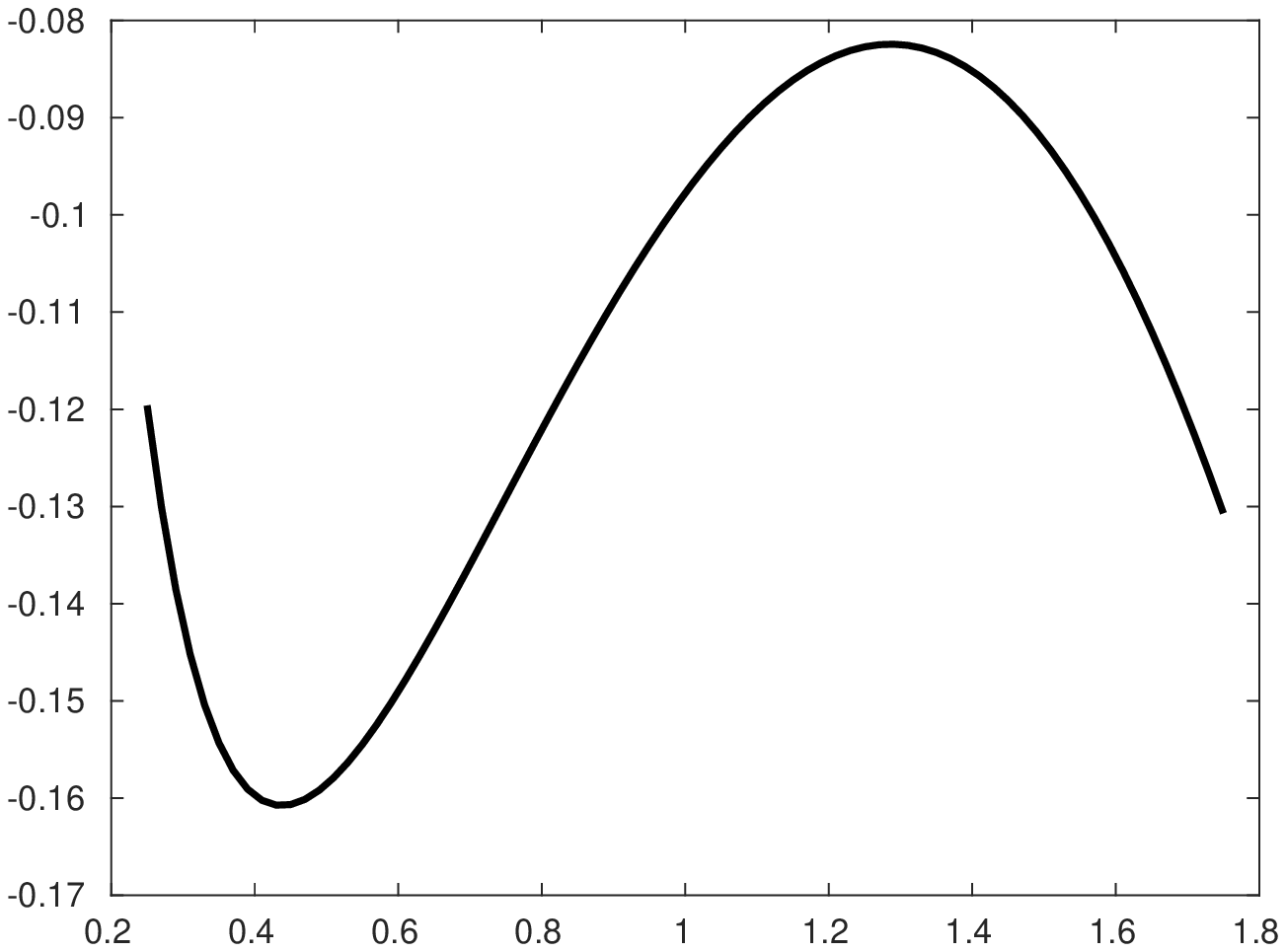}
\put(50,0){\scriptsize $\sigma$}
\put(-5,35){\scriptsize $\Lambda_c$}
\end{overpic} 
}

\subfloat[\label{fig:smalldomain_b35}$b=3.5$]{
\begin{overpic}[width=0.4\linewidth]{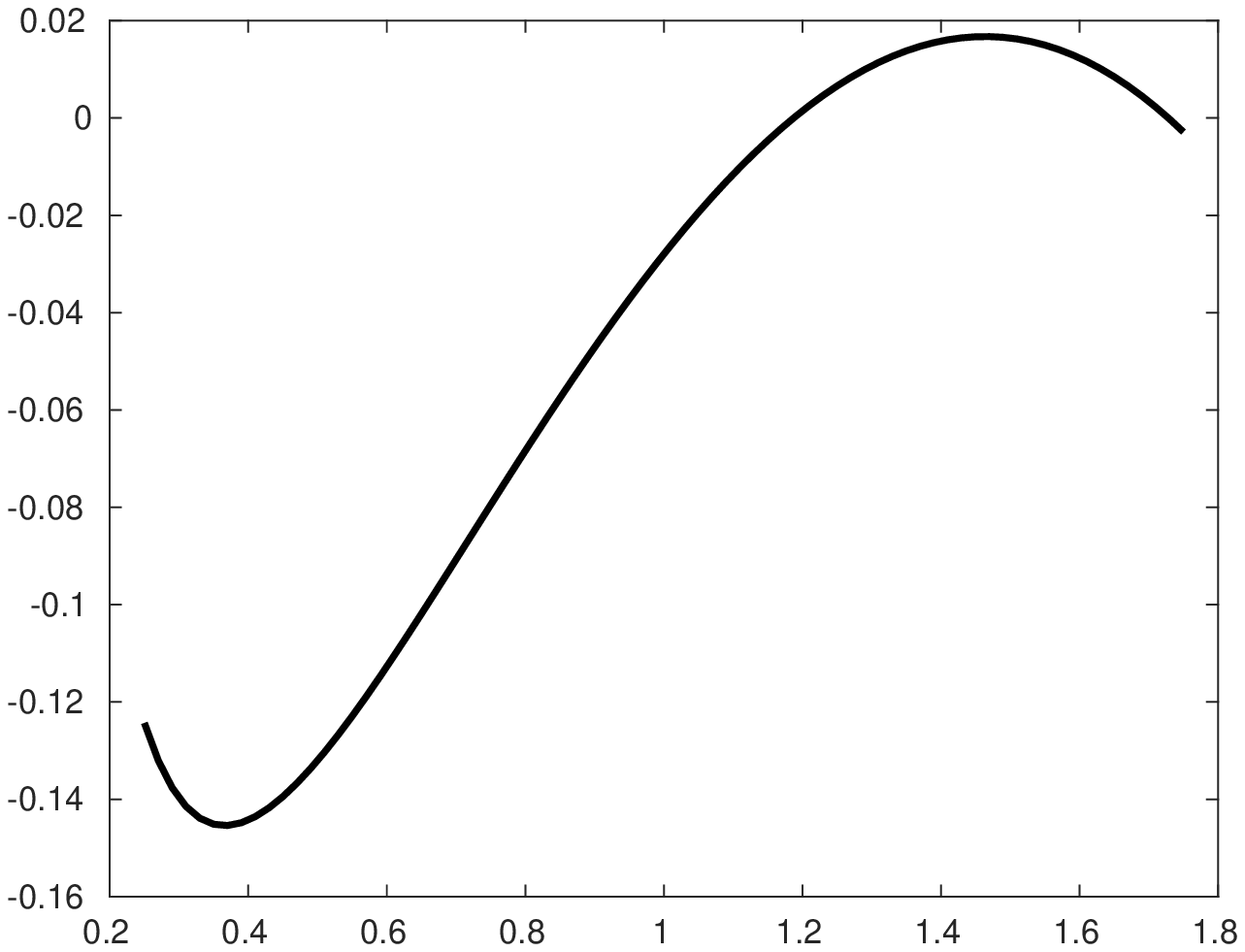}
\put(60,62.5){\tiny $\mathbf \oplus$}
\put(57,60){\tiny $\mathbf \ominus$}
\put(50,0){\scriptsize $\sigma$}
\put(-5,35){\scriptsize $\Lambda_c$}
\end{overpic} 
}
\subfloat[\label{fig:smalldomain_b4}$b=4$]{
\begin{overpic}[width=0.4\linewidth]{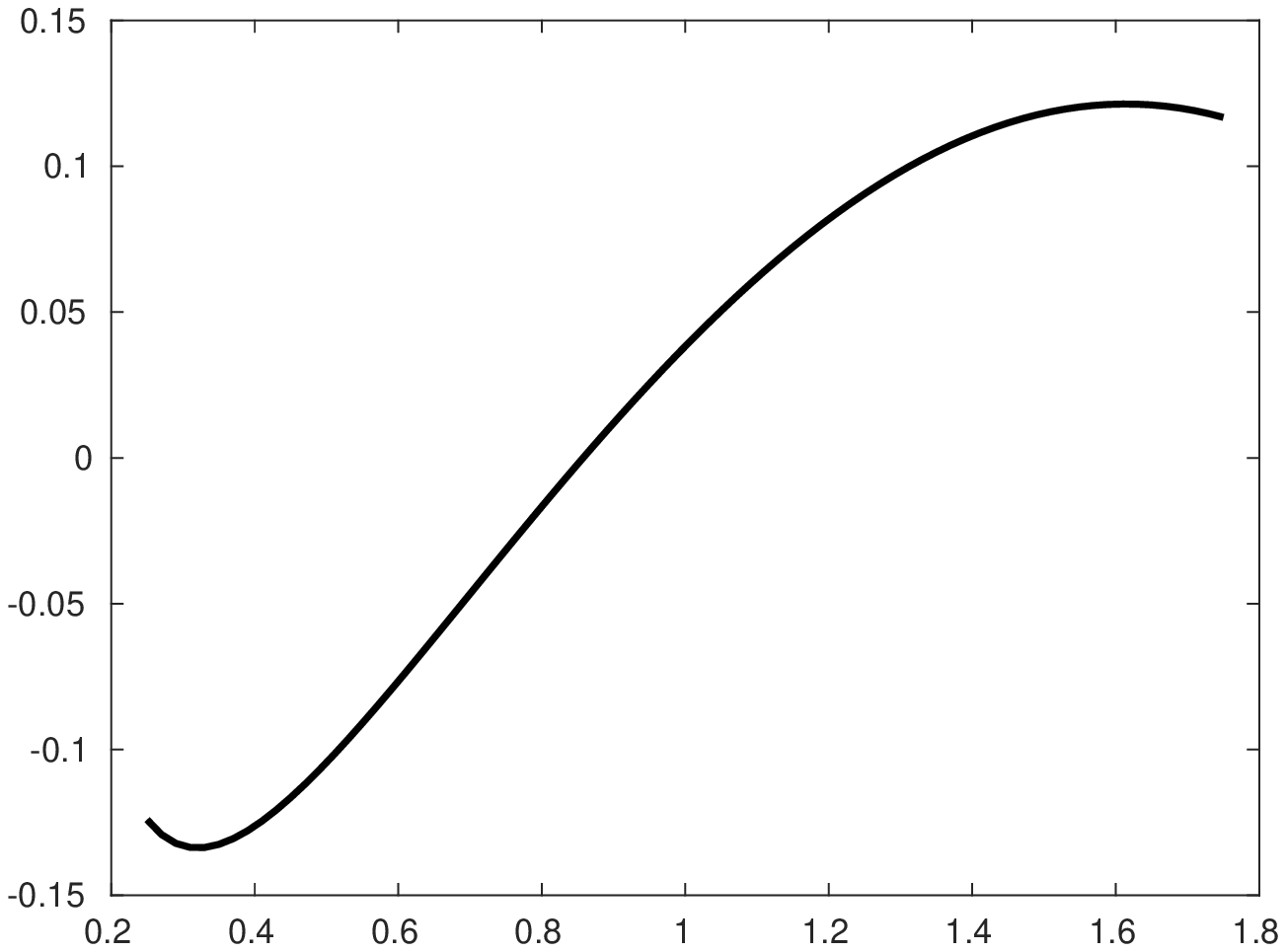}
\put(50,0){\scriptsize $\sigma$}
\put(-5,35){\scriptsize $\Lambda_c$}
\end{overpic} 
}
\end{center}
\caption{Numerical computation of the conditioned Lyapunov exponent $\Lambda_c$ as a function of $\sigma$ for $[\rmin, \rmax] = [0.75, 1.25]$, and $b=2.5$, $3$, $3.5$ and $4$. For $b=3.5$, we proved that a transition occurs by rigorously computing two $\Lambda_c$ close to the crossing through $0$, indicated by $\oplus$ and $\ominus$ respectively in (c), for which we can prove that $\Lambda_c>0$ (resp. $\Lambda_c<0$).}
\label{fig:smalldomain}
\end{figure}

We prove that such a transition occurs, by rigorously computing $\Lambda_c$ for $b=3.5$ and two noise level $\sigma=1.20$ and $\sigma=1.15$ for which we get different signs.
These proofs are done with truncation levels $K=30$ and $N=30$ of the Chebyshev-Fourier modes. Notice that the number of modes is not dictated by the eigenfunctions $\eta$ and $\phi$ themselves, which could be very accurately approximated with fewer modes, but by the rigorous validation process, and in particular by the fact that we need to compute many more eigenfunctions, of $\tD$ and $\S$, with reasonable accuracy during the homotpy method.

\begin{figure}[h!]
\begin{center}
\subfloat[\label{fig:eta_smalldomain}$\eta(r, \psi)$]{
\begin{overpic}[width=0.45\linewidth]{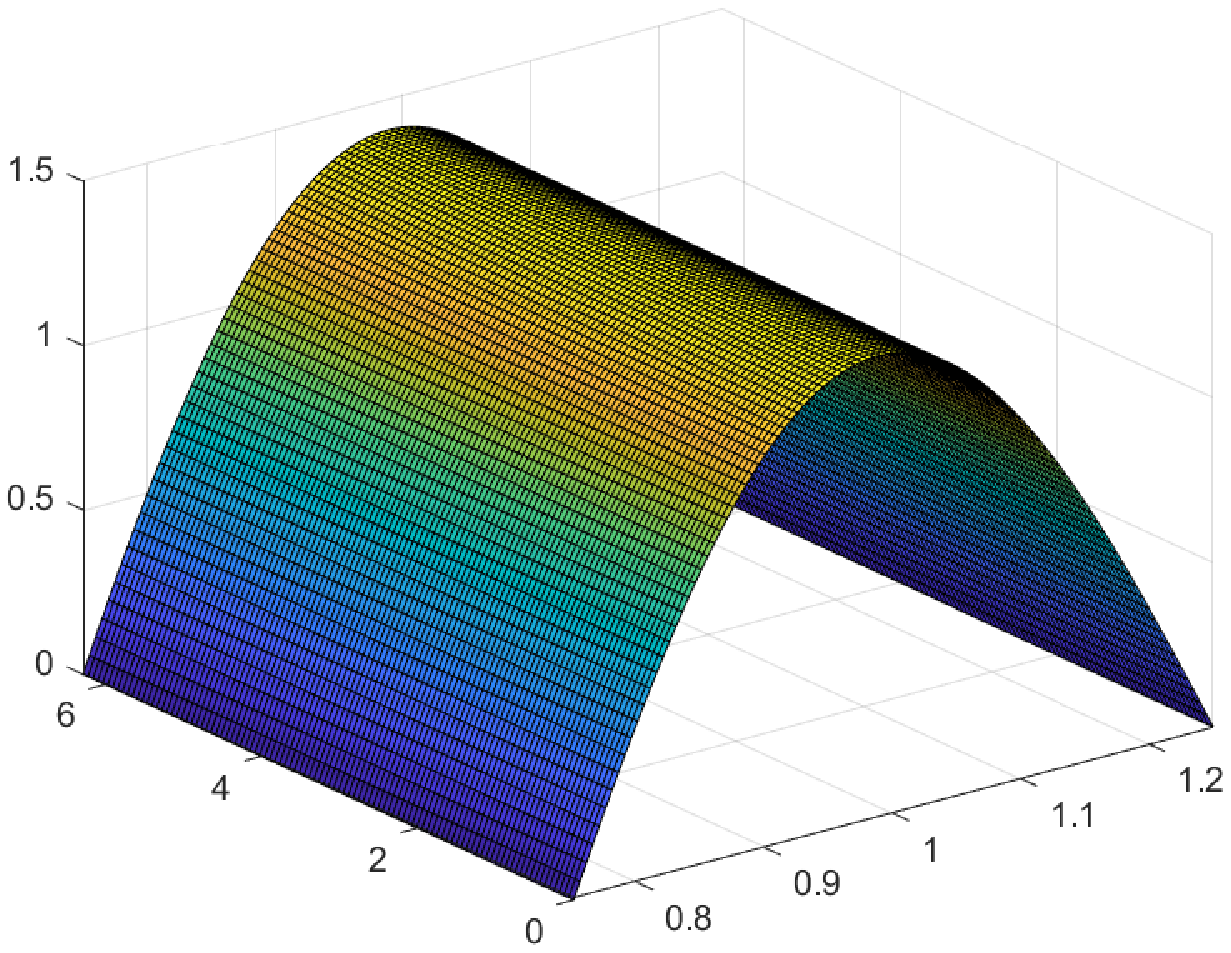}
\put(75,8){\scriptsize $r$}
\put(20,10){\scriptsize $\psi$}
\end{overpic} 
}
\subfloat[\label{fig:phi_smalldomain}$\phi(r, \psi)$]{
\begin{overpic}[width=0.45\linewidth]{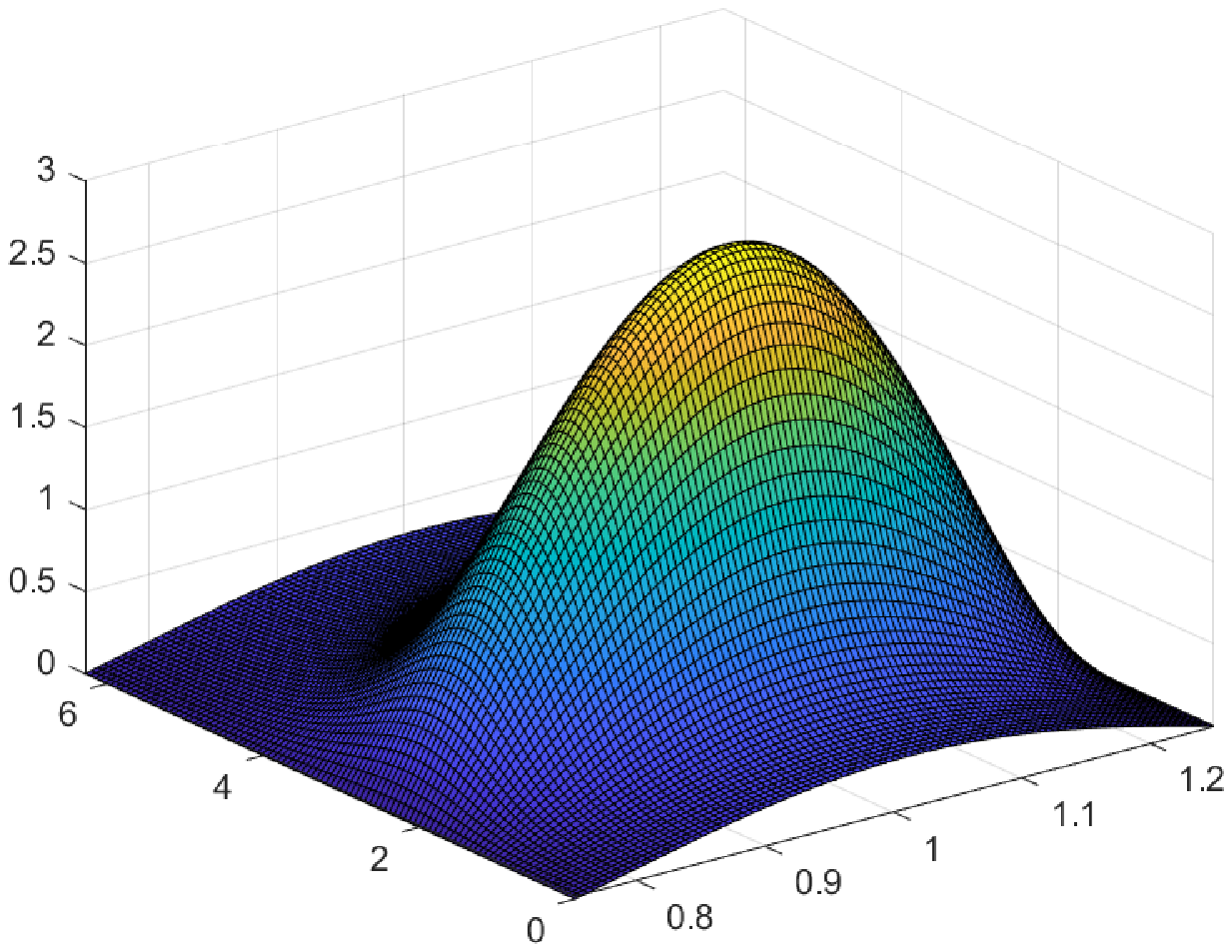}
\put(75,8){\scriptsize $r$}
\put(20,10){\scriptsize $\psi$}
\end{overpic} 
}
\end{center}
\caption{The validated eigenfunctions $\eta$ of $L$ (a) and $\phi$ of $L^*$ (b), for $[\rmin, \rmax] = [0.75, 1.25]$, $b=3.5$ and $\sigma=1.2$.}
\label{fig:eigenfunctions}
\end{figure}

For $\sigma = 1.20$, the eigenfunctions $\eta$ of $L$ and $\phi$ of $L^*$ represented in Figure~\ref{fig:eigenfunctions} are validated using Theorem~\ref{thm:zero_of_F}. We obtain error bounds of $\rho_{\eta} = 1.15\times 10^{-10}$ and $\rho_{\phi} = 2.70\times 10^{-10}$. We then rigorously check assumption~\eqref{eq:cond_eta}, as explained in Section~\ref{sec:correct_eig_num}, and assumption~\eqref{eq:cond_phi}, which ensures we have validated the correct eigenfunctions. Finally, a rigorous evaluation of the modified Furstenberg-Khasminskii formula~\eqref{eq:FKformula_cond} yields
\begin{equation*}
\Lambda_c  \in [0.001453,0.001456],
\end{equation*}
and so we have proven that $\Lambda_c$ is positive in this case. (The escape rate, i.e. the eigenvalue associated to $\eta$ and $\phi$, is approximately equal to $-27.2$).

A similar computer-assisted argument yields that, for $\sigma = 1.15$, 
\begin{equation*}
\Lambda_c  \in [-0.004618,-0.004615],
\end{equation*}
and so we have proven that $\Lambda_c$ is negative in that case. 

All the computer-assisted parts of the proofs can be reproduced using the Matlab code available at~\cite{BreEng21}.

\subsubsection{$[r_{min},r_{max}]=[0.5, 1.5]$}
On the domain $[r_{min},r_{max}]=[0.5, 1.5]$, we observe a similar behavior as in the previous case, i.e.~on the domain given by $[r_{min},r_{max}]=[0.75, 1.25]$ (see also the close similarity of the eigenfunctions in Figure~\ref{fig:eigenfunctions_rlarge} and Figure~\ref{fig:eigenfunctions}).

One difference is that we need slightly more shear to obtain a positive Lyapunov exponent. The other difference is that, since the domain is now larger, the obtained escape rates are lower. An illustration of the numerically obtained behavior of $\Lambda_c$ as a function of $\sigma$ for $b=3.6$ is given in Figure~\ref{fig:numerics_largerdom}.

\begin{figure}[!ht]
\centering
\begin{overpic}[width=0.45\linewidth]{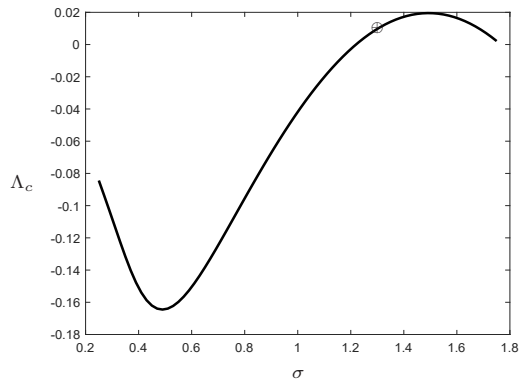}
\put(64.5,64){\tiny $\mathbf \oplus$}
\put(50,0){\scriptsize $\sigma$}
\put(-2,35){\scriptsize $\Lambda_c$}
\end{overpic} 
\caption{Numerical computation of the conditioned Lyapunov exponent $\Lambda_c$ as a function of $\sigma$ for $[\rmin, \rmax] = [0.5, 1.5]$, and $b=3.6$. For $\sigma=1.3$, indicated by $\oplus$, we rigorously computed $\Lambda_c$ and proved that it was positive.}\label{fig:numerics_largerdom}
\end{figure}

With this larger domain, the proof is computationally more demanding (in each homotopy the base problem is in some sense further away from the end problem), and this challenge generally increases with larger domains (and smaller $\sigma$). Nonetheless, we managed to apply the whole procedure described in this paper for $b=3.6$ and $\sigma=1.3$, this time with $K=70$ and $N=50$, and obtain that
\begin{equation*}
\Lambda_c  \in [0.00970, 0.00972], \ 
\end{equation*}
with an escape rate of roughly $-7.1$. The fact that $[0.00970, 0.00972]\subset (0,\infty)$ proves Theorem~\ref{thm:main}. 


\section*{Acknowledgment}

We heartily thank M. Plum for several helpful discussions and references about rigorous eigenvalue bounds. M. Engel has been supported by Germany's Excellence Strategy -- The Berlin Mathematics Research Center MATH+ (EXC-2046/1, project ID: 390685689).


\bibliographystyle{abbrv}
\bibliography{bib_maxis}


\appendix

\section*{Appendix}

\section{Definition of a random dynamical system}
\label{appx-rds}

A random dynamical system consists of two ingredients: a ergodic dynamical system that models the noise, and a cocycle that models the dynamics of the system. The definition of a random dynamical system is given as follows \cite[Definition 1.1.2]{a98}.
\begin{definition}[Random dynamical system]
  Let $(\Omega,\mathcal F,\mathbb P)$ be a probability space. A random dynamical system is a pair of mappings $(\theta, \varphi)$.
  \begin{itemize}
    \item[$\bullet$] The ($\mathcal B(\R)\otimes \mathcal F$, $\mathcal F$)-measurable mapping $\theta: \R\times \Omega\to \Omega$, $(t,\omega)\mapsto \theta_t\omega$, is an ergodic dynamical system, i.e.
    \begin{itemize}
		\item [(i)]	$\theta_0=\id$ and $\theta_{t+s}=\theta_t\circ \theta_s$ for $t,s\in\mathbb R$,
		\item [(ii)] $\mathbb P (A) = \mathbb P(\theta_t A)$ for all $A\in \mathcal F$ and $t\in\mathbb R$,
		\item [(iii)] any $A\in\mathcal F$ with $\theta_tA =A$ for all $t\in\mathbb R$ satisfies $\mathbb P(A)\in \{0,1\}$.
	\end{itemize}	
    \item[$\bullet$]
	The ($\mathcal{B}(\mathbb{R})\otimes \mathcal{F} \otimes \mathcal{B}(\mathbb R^d)$, $\mathcal{B}(\mathbb R^d)$)-measurable mapping $\varphi: \mathbb{R} \times \Omega \times \mathbb R^d \to \mathbb R^d, (t, \omega, x) \mapsto \varphi(t, \omega,x)$, is a cocycle over $\theta$, i.e.
	\[
	\varphi(0, \omega, \cdot) \equiv \Id \quad \text{and} \quad \varphi(t+s, \omega,x) = \varphi(t, \theta_s \omega,\varphi(s, \omega,x)) \fa \omega \in \Omega, x \in \R^d \text{ and } t, s \in\R\,.
	\]
  \end{itemize}
  The random dynamical system $(\theta,\varphi)$ is called continuous if $(t, x) \mapsto \varphi(t, \omega)x$ is continuous for every $\omega \in \Omega$. If the mapping $\varphi$ is only defined on $\mathbb{R}_0^+ \times \Omega \times \mathbb R^d$, we speak of a \emph{one-sided} random dynamical system.
\end{definition}
\subsection{RDS induced by an SDE} \label{app:RDSbySDE}
In this paper, we investigate random dynamical systems induced by stochastic differential equations. Hence, we are interested in random dynamical systems adapted to a suitable filtration and of white noise type. Following \cite{fgs16}, we make the following definition:
\begin{definition} \label{whitenoiseRDS}
Let $(\theta, \varphi)$ be a random dynamical system over a probability space $(\Omega,\mathcal F,\mathbb P)$ on a topological space $X$ where $\varphi$ is defined in forward time. 
Let $(\mathcal{F}_s^t)_{-\infty \leq s \leq t \leq \infty} $ be a family of sub-$\sigma$-algebras of $\mathcal F$ such that 
\begin{enumerate}[(i)]
\item $F_t^u \subset F_s^v$ for all $ s \leq t \leq u \leq v$,
\item $F_s^t$ is independent from $F_u^v$ for all $ s \leq t \leq u \leq v$,
\item $ \theta_r^{-1}(\mathcal F_s^t) = \mathcal{F}_{s+r}^{t+r}$ for all $ s \leq t$, $r \in \mathbb{R}$,
\item $\varphi(t, \cdot, x)$ is $\mathcal F_0^t$-measurable for all $t \geq 0$ and $x \in X$.
\end{enumerate}  
Furthermore we denote by $\mathcal{F}_{-\infty}^t$ the smallest $sigma$-algebra containing all $\mathcal F_s^t$, $s \leq t$, and by $\mathcal{F}_{t}^{\infty}$ the smallest $sigma$-algebra containing all $\mathcal F_t^u$, $t \leq u$. Then $(\theta, \varphi)$ is called a \textit{white noise (filtered) random dynamical system}.
\end{definition}
Consider a stochastic differential equation (SDE)
\begin{equation} \label{SDE_RDS}
\rmd X_t = f(X_t) \rmd t + g(X_t) d W_t, \ X_0 \in \mathbb{R}^d\,,
\end{equation} 
where $(W_t)$ denotes some r-dimensional standard Brownian motion, the drift $f: \mathbb{R}^d \to \mathbb{R}^d$ is a locally Lipschitz continuous vector field and the diffusion coefficient $g: \mathbb{R}^d \to \mathbb{R}^{d \times r}$ a Lipschitz continuous matrix-valued map.  If in addition $f$ satisfies a bounded growth condition, as for example a one-sided Lipschitz condition, then by \cite{ds11} there is a white noise random dynamical system $(\theta, \varphi)$ associated to the diffusion process solving~\eqref{SDE_RDS}. The probabilistic setting is as follows: We set $\Omega=C_0(\mathbb R,\mathbb R^r)$, i.e. the space of all continuous functions $\omega:\mathbb R\rightarrow \mathbb R^r$ satisfying that $\omega(0)=0\in \mathbb R^r$. If we endow $\Omega$ with the compact open topology given by the complete metric
\[
\kappa(\omega,\widehat{\omega})
:=
\sum_{n=1}^{\infty}
\frac{1}{2^n}
\frac{\|\omega-\widehat{\omega}\|_n}{1+\|\omega-\widehat{\omega}\|_n},\quad
\|\omega-\widehat \omega\|_n:=\sup_{|t|\leq n } \|\omega(t)-\widehat{\omega}(t)\|\,,
\]
we can set $\mathcal F = \mathcal{B} (\Omega)$, the Borel-sigma algebra on $(\Omega,\kappa)$. There exists a probability measure $\mathbb P$ on $(\Omega,\mathcal F)$ called \emph{Wiener measure} such that the $r$ processes $(W_t^1), \dots, (W_t^r)$ defined by $(W_t^1(\omega), \dots, W_t^r(\omega))^{\mathrm T}:=\omega(t)$ for $\omega\in\Omega$ are independent one-dimensional Brownian motions. Furthermore, we define the sub-$\sigma$-algebra $\mathcal{F}_s^t$ as the $\sigma$-algebra generated by $\omega(u)- \omega(v)$ for $s \leq v \leq u \leq t$. The ergodic metric dynamical system $(\theta_t)_{t\in\mathbb R}$  on $(\Omega,\mathcal F,\mathbb P)$  is given by the shift maps
\begin{equation*}
\theta_t:\Omega\rightarrow \Omega, \quad (\theta_t \omega)(s) = \omega(s+t) - \omega(t)\,.
\end{equation*}
Indeed, these maps form an ergodic flow preserving the probability $\mathbb P$, see e.g. \cite{a98}.

\subsection{Invariant measures} \label{app:invmeas}
Let $(\theta, \varphi)$ be a random dynamical system with the cocycle $\varphi$ being defined on one-or two-sided time $\mathbb T \in \{ \mathbb{R}_0^+, \mathbb{R} \}$. Then the system generates a skew product flow, i.e. a family of maps $(\Theta_t)_{t \in \mathbb{T}}$ from $\Omega \times \R^d $ to itself such that for all $t \in \mathbb T$ and $\omega \in \Omega, x \in \R^d$
\begin{equation*}
\Theta_t(\omega, x) = (\theta_t \omega, \varphi(t, \omega,x))\,.
\end{equation*}
The notion of an invariant measure for the random dynamical system is given via the invariance with respect to the skew product flow, see e.g. \cite[Definition 1.4.1]{a98}. We denote by $ T^*\mu$ the push forward of a measure $\mu$ by a map $T$, i.e. $T^* \mu(\cdot) = \mu (T^{-1}(\cdot))$.
\begin{definition} A probability measure $\mu$ on $\Omega \times \R^d$ is invariant for the random dynamical system $(\theta, \varphi)$ if
\begin{enumerate}
\item[(i)] $\Theta_t^* \mu = \mu$ for all $ t \in \mathbb T$\,,
\item[(ii)] the marginal of $\mu$ on $\Omega$ is $\mathbb{P}$, i.e.~$\mu$ can be factorised uniquely into $\mu(\rmd\omega, \rmd x) = \mu_{\omega}(\rmd x) \mathbb{P}(\rmd \omega)$ where $\omega \mapsto \mu_{\omega}$ is a random measure on $\R^d$.
\end{enumerate}
\end{definition}
The marginal of $\mu$ on the probability space is demanded to be $\mathbb{P}$ as we assume the model of the noise to be fixed.
Note that the invariance of $\mu$ is equivalent to the invariance of the random measure $\omega \mapsto \mu_{\omega}$ on the state space in the sense that
\begin{equation} \label{Markovmeasure}
\varphi(t,\omega, \cdot)^* \mu_{\omega} = \mu_{\theta_t \omega} \quad \mathbb{P}\text{-a.s. for all} \ t \in \mathbb T\,.
\end{equation}
For white noise random dynamical systems $(\theta, \varphi)$, in particular random dynamical systems induced by a stochastic differential equation, there is a one-to-one correspondence between certain invariant random measures and stationary measures of the associated stochastic process, first observed in \cite{c91}. In more detail, we can define a Markov semigroup $(P_t)_{t \geq 0}$ by setting
$$ P_t f(x) = \mathbb{E}(f(\varphi(t, \cdot,x))$$
for all measurable and bounded functions $f: X \to \mathbb{R}$. If $\omega \mapsto \mu_{\omega}$ is a $\mathcal{F}_{-\infty}^{0}$-measurable invariant random measure in the sense of \eqref{Markovmeasure}, also called \textit{Markov measure}, then
\begin{equation*}
\rho (\cdot) = \mathbb{E} [\mu_{\omega}(\cdot)] = \int_{\Omega} \mu_{\omega} (\cdot) \mathbb{P}(d \omega)
\end{equation*}
turns out to be an invariant measure for the Markov semigroup $(P_t)_{t \geq 0}$, often also called stationary measure for the associated process. If $\rho$ is an invariant measure for the Markov semigroup, then
\begin{equation*}
\mu_{\omega} = \lim_{t \to \infty} \varphi(t, \theta_{-t}\omega, \cdot)\rho
\end{equation*}
exists $\mathbb{P}$-a.s.~and is an $\mathcal{F}_{-\infty}^{0}$-measurable invariant random measure.

We observe similarly to \cite{b91} that in the situation of $\mu$ and $\rho$ corresponding in the way described above
$$ \mathbb E [ \mu_{\omega}(\cdot) | \mathcal{F}_{0}^{\infty}] = \mathbb{E} [ \mu_{\omega} (\cdot) ] = \rho(\cdot)\,,$$
and, hence,
$$ \mathbb E [ \mu(\cdot) | \mathcal{F}_{0}^{\infty}] = (\mathbb P \times \rho)(\cdot)\,.$$
Therefore the probability measure $\mathbb{P} \times \rho$ is invariant for $(\Theta_t)_{t \geq 0}$ on $(\Omega \times \R^d, \mathcal{F}_0^\infty \times \mathcal{B}(\R^d))$. In words, the product measure with marginals $\mathbb{P}$ and $\rho$ is invariant for the random dynamical system restricted to one-sided path space. 

\section{Lyapunov spectrum}
\label{appx-LyapSpec}

The random dynamical system $(\theta, \varphi)$ is called $C^k$ if $\varphi(t, \omega, \cdot) \in C^k$ for all $t\in\mathbb T$ and $\omega\in\Omega$, where again $ \mathbb T \in \{ \R, \R_0^+\}$. Let's assume that $(\theta, \varphi)$ is  $C^1$. The \textit{linearisation} or \textit{derivative} $\rmD \varphi(t,\omega,x)$ of $\varphi(t,\omega,\cdot)$ at $x \in \R^d$ is the Jacobian $d\times d$ matrix
$$ \rmD_x \varphi(t,\omega,x) = \frac{\partial \varphi(t, \omega,x)}{\partial x}\,.$$
Differentiating the equation
$$ \varphi(t+s,\omega,x) = \varphi(t, \theta_s \omega, \varphi(s,\omega,x))$$
on both sides and applying the chain rule to the right hand side yields
$$ \rmD_x \varphi(t+s,\omega,x) = \rmD_x \varphi(t, \theta_s \omega, \varphi(s,\omega,x))\rmD_x \varphi(s,\omega,x) =  \rmD_x \varphi(t, \Theta_s(\omega,x))\rmD_x \varphi(s,\omega,x)\,,$$
i.e. the cocycle property of the fibrewise mappings with respect to the skew product maps $(\Theta_t)_{t \in \mathbb T}$. Let us now assume that the random dynamical system possesses an ergodic invariant measure $\mu$. This implies that $(\Theta,\rmD_x \varphi)$ is a random dynamical system with linear cocycle $\rmD_x \varphi$ over the metric dynamical system $(\Omega \times \R^d, \mathcal F \times \mathcal{B}(\R^d), (\Theta_t)_{t \in \mathbb T})$, see e.g. \cite[Proposition 4.2.1]{a98}.
Suppose that $\Phi:=\rmD_x \varphi$ satisfies the integrability condition
\[
  \sup_{0 \leq t \leq 1} \ln^+ \| \Phi(t, \omega,x) \| \in L^1(\mu)\,,
\]
where $\ln^+(x):= \max\{\ln(x),0\}$. Then the Multiplicative Ergodic Theorem \cite[Theorem 3.4.1]{a98} guarantees the existence of a $\Theta$-forward invariant set $\widehat \Omega \subset \Omega \times \R^d$ with $\mu (\widehat \Omega) = 1$, the Lyapunov exponents $\Lambda_1 > \dots > \Lambda_p$, and an invariant measurable filtration
\begin{displaymath}
\mathbb R^d = V_1(\omega,x) \supsetneq V_2(\omega,x)\supsetneq \dots \supsetneq V_p(\omega,x) \supsetneq V_{p+1}(\omega)= \{0\}\,,
\end{displaymath}
such that  for all $0 \neq v \in \mathbb{R}^d$, the \emph{Lyapunov exponent} $\Lambda(v, \omega, x)$, defined by
\begin{equation*}
  \Lambda(v, \omega, x) = \lim_{t \to \infty} \frac{1}{t} \ln \| \Phi(t, \omega, x) v \|
\end{equation*}
exists, and we have
\begin{equation*}
  \Lambda (v, \omega, x) = \Lambda_{i}\quad \Longleftrightarrow \quad v \in V_i(\omega,x) \setminus V_{i+1}(\omega,x) \fa i\in\{1, \dots, p\}\,.
\end{equation*}

\section{Conditioned Lyapunov exponents for RDS} \label{appx-cle}
The following is a short summary of the results from Engel et al.~\cite{engellambrasmussen19_2} which are relevant for this paper. 

We consider the stochastic differential equation \eqref{MainEq_killed} and the time-homogeneous Markov process $(X_t)_{t\geq 0}$ on the Wiener space 
$\Omega=C_0(\mathbb R_0^+,\mathbb R^d)$, i.e.~the space of all continuous functions $\omega:\mathbb R_0^+\rightarrow \mathbb R^d$ satisfying that $\omega(0)=0\in \mathbb R^d$, for an initial condition $X_0\in E$.  Let $(\theta:\mathbb R_0^+\times \Omega\to\Omega, \, \varphi:\mathbb R_0^+\times \Omega\times \bar E \to \bar E )$ be the continuous random dynamical system generated by (see Appendix~\ref{appx-rds}). Similarly to Appendix~\ref{app:RDSbySDE}, the family $(\theta_t)_{t\in\mathbb R_0^+}$ is the $\mathcal B(\mathbb R_0^+)\otimes \mathcal F$-measurable collection of shift maps 
\begin{equation*}
\theta_t:\Omega\rightarrow \Omega, \quad (\theta_t \omega)(s) = \omega(s+t) - \omega(t)\,,
\end{equation*}
preserving the ergodic probability measure $\mathbb P:\mathcal F\to[0,1]$. 
We assume that the cocycle $\varphi: \mathbb{R}_{0}^+ \times \Omega \times \bar E  \to \bar E$ is globally defined in time, in the sense that it takes a constant value in $\partial E$ if the system is killed at the boundary $\partial E$.
We have
$$
  \mathbb P_x( X_t \in B) = \mathbb{P}(\varphi(t, \cdot, x) \in B) \fa  t \geq 0\,,\, x \in E \mbox{ and } B \in \mathcal B (\bar E)\,.
$$
Define the stopping time $\tilde T: \Omega \times E \to \mathbb{R}_0^+$ as
$$ \tilde T(\omega,x) = \inf \big\{ t > 0\,:\, \hat{\varphi}(t, \omega,x) \in \partial E \big\}\,$$
such that for all $x \in E$ and $t\geq 0$
$$ \mathbb P_x( T > t) = \mathbb{P}( \tilde T(\cdot,x) > t)\,$$
where $T$ is the absorption time of the Markov process.

Note that $\varphi(t, \omega,\cdot)$ is differentiable for all $\omega \in\Omega$, $x\in E$ and $t < \tilde T(\omega,x)$. We consider the finite-time Lyapunov exponents
\begin{displaymath}
  \Lambda_{v}(t,\omega,x) = \frac{1}{t}  \ln \frac{\| \rmD_x \varphi(t,\omega,x) v \|}{\|v\|} \fa t \in \big(0, \tilde T(\omega,x)\big)\,,
\end{displaymath}
where $\rmD_x \varphi$ solves the variational equation~\eqref{Linearization}.

Consider the extended process $(X_t, s_t)_{t \geq 0}$, where
$$s_t(\omega, x, v) = \frac{\rmD_x \varphi(t,\omega,x)v}{\| \rmD_x \varphi(t,\omega,X_0) v\|}$$
denotes the induced process on the unit sphere.
Then we can apply Furstenberg--Khasminskii averaging to show that
\begin{equation} \label{lambda_v}
\Lambda_c := \lim_{t \to \infty} \mathbb{E} \big[ \Lambda_v (t, \cdot, x)  \big| \tilde T(\cdot,x) > t \big] \fa x\in E \mbox{ and } v\in \R^d\setminus \{0\}
\end{equation}
exists and is independent of $x$ and $v$.
\begin{theorem}[Conditioned Lyapunov exponent] \label{fk_killed}
  Consider the process $(X_t, s_t)_{t \geq 0}$ under the assumption that it possesses a joint quasi-ergodic distribution $\tilde{m}$ on $E\times \mathbb S^{d-1}$. Then the \emph{conditioned Lyapunov exponent} $\Lambda_c$ as defined in~\eqref{lambda_v} exists and is given by
  \begin{equation}\label{fkformula_quasi}
  \Lambda_c = \lim_{t \to \infty} \mathbb{E} \big[ \Lambda_v (t, \cdot, x)  \big| \tilde T(\cdot,x) > t \big] = \int_{\mathbb S^{d-1} \times E}  \langle s, \rmD f(y) s \rangle \ \tilde{m}(\rmd s, \rmd y)\,,
  \end{equation}
  where the convergence is uniform over all $x \in E$ and $v \in \mathbb{R}^d \setminus \{0\}$.
\end{theorem}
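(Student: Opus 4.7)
\medskip

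\textbf{Proof proposal.} The plan is to reduce the limit~\eqref{lambda_v} to an ergodic average against the quasi-ergodic distribution $\tilde m$, via a pathwise representation of the finite-time Lyapunov exponent as a time average of the instantaneous expansion rate $Q(y,s):=\langle s,\rmD f(y)s\rangle$. First, I would exploit the fact that the linearization $\Phi(t,\omega,x):=\rmD_x\varphi(t,\omega,x)$ solves the (pathwise) linear ODE~\eqref{Linearization}. Setting $r_t(\omega,x,v):=\|\Phi(t,\omega,x)v\|$ and $s_t=\Phi(t,\omega,x)v/r_t$, differentiating $\ln r_t$ along trajectories gives
\begin{equation*}
\frac{d}{dt}\ln r_t=\frac{\langle \Phi v,\rmD f(X_t)\Phi v\rangle}{r_t^2}=\langle s_t,\rmD f(X_t)s_t\rangle=Q(X_t,s_t),
\end{equation*}
valid for all $t<\tilde T(\omega,x)$. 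Integrating, we obtain the pathwise identity
\begin{equation*}
\Lambda_v(t,\omega,x)=\frac{1}{t}\int_0^t Q(X_u,s_u)\,du,\qquad t\in(0,\tilde T(\omega,x)),
\end{equation*}
so that the finite-time Lyapunov exponent is precisely the ergodic average of $Q$ along the unit-tangent bundle process.

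Next, I would invoke the definition of the joint quasi-ergodic distribution $\tilde m$ for $(X_t,s_t)$ on $E\times\mathbb S^{d-1}$: by assumption, for every bounded measurable $g:E\times\mathbb S^{d-1}\to\R$,
\begin{equation*}
\lim_{t\to\infty}\mathbb{E}_{(x,v)}\!\left[\frac{1}{t}\int_0^t g(X_u,s_u)\,du\,\bigg|\,\tilde T(\cdot,x)>t\right]=\int_{\mathbb S^{d-1}\times E} g(y,s)\,\tilde m(\rmd s,\rmd y),
\end{equation*}
with the convergence uniform in the initial condition (this is part of the standard definition of the quasi-ergodic distribution, as surveyed in~\cite{cmm13,mv12} and applied in~\cite{engellambrasmussen19_2}). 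Since $E$ is bounded, $f\in C^1$, and $\mathbb S^{d-1}$ is compact, the test function $Q(y,s)=\langle s,\rmD f(y)s\rangle$ is bounded and continuous, so it is a legitimate choice. Applying the quasi-ergodic identity to $g=Q$ and combining with the pathwise formula above produces the desired equality and proves existence and independence of the limit from $x$ and $v$.

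The uniformity of the convergence over $x\in E$ and $v\in\R^d\setminus\{0\}$ is inherited directly from the uniform convergence statement for the quasi-ergodic distribution; note that the right-hand side of the pathwise formula depends on $v$ only through the initial angle $s_0=v/\|v\|$, hence uniformity in $v$ reduces to uniformity in $s_0\in\mathbb S^{d-1}$, which is supplied by the quasi-ergodicity on the product space $E\times\mathbb S^{d-1}$.

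The main obstacle, in my view, is not the pathwise calculation (which is essentially the deterministic Furstenberg--Khasminskii chain rule) but rather the careful justification that the joint process $(X_t,s_t)$ genuinely admits a quasi-ergodic distribution and that the associated ergodic convergence holds uniformly in the initial data. This is a hypothesis of the theorem as stated, so in the proof sketch above I treat it as given; in practice one must verify it by showing, e.g., that the hypoelliptic generator of $(X_t,s_t)$ on the compact set $E\times\mathbb S^{d-1}$ fits into the Champagnat--Villemonais framework~\cite{CV16,ccv16}, which ensures both the existence of a unique quasi-stationary distribution $\tilde\nu$ with exponential convergence and the representation $\tilde m(\rmd s,\rmd y)=\tilde\eta(y,s)\tilde\nu(\rmd s,\rmd y)$ used elsewhere in the paper.
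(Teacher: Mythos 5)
Your sketch is essentially the argument behind this theorem: the paper itself gives no proof here (Appendix~\ref{appx-cle} is an explicit summary quoting the result from~\cite{engellambrasmussen19_2}), and the proof in that reference proceeds exactly as you do, namely the pathwise chain-rule identity $\Lambda_v(t,\omega,x)=\frac{1}{t}\int_0^t \langle s_u,\rmD f(X_u)s_u\rangle\,\rmd u$ for $t<\tilde T(\omega,x)$ (valid pathwise since the noise in~\eqref{MainEq_killed} is additive, so the variational equation~\eqref{Linearization} is a random ODE), followed by the conditioned ergodic averaging of the bounded continuous observable $Q(y,s)=\langle s,\rmD f(y)s\rangle$ for the joint process on $E\times\mathbb S^{d-1}$.

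One caveat: your claim that uniformity of the convergence in the initial condition ``is part of the standard definition of the quasi-ergodic distribution'' is not accurate. The standard definition (e.g.~\cite{bro99}) is pointwise in the initial condition, and by itself it yields~\eqref{fkformula_quasi} only pointwise in $(x,s_0)$, which already gives existence of $\Lambda_c$ and its independence of $x$ and $v$. The uniformity asserted in the theorem is an additional conclusion that in~\cite{engellambrasmussen19_2} is derived from a uniform conditioned ergodic theorem, itself obtained from Champagnat--Villemonais-type conditions~\cite{CV16,ccv16} guaranteeing uniform exponential convergence to the quasi-stationary distribution of the tangent-bundle process. You essentially acknowledge this in your closing paragraph, so the remedy is simply to route the uniformity through that machinery rather than through the definition of $\tilde m$; with that adjustment your proof coincides with the one in the cited source.
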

In this paper, we find the quasi-ergodic distribution $m$ as
\begin{equation*}
m(\rmd x) = \eta(x) \phi(x) \rmd x\,,
\end{equation*}
where $\eta$ is an eigenfunction of the backward Kolmogorov operator $L$ and $\phi$ is an eigenfunction of the forward Kolmogorov operator $L^*$.

Additionally, we mention the following theorem which equips the limit of expected values $\Lambda_c$ with the strongest possible dynamical meaning in the setting of killed processes.

\begin{theorem}[Convergence in $L^p$ and conditional probability]  \label{coninprob_md}
Consider a stochastic differential equation of the form~\eqref{MainEq_killed} corresponding to the Markov process $(X_t)_{t \geq 0}$ that is killed at $\partial E$ such that the conditioned Lyapunov exponent $\Lambda_c$ exists.
Then we have for all $1 \leq p \leq 2$ that 
\begin{equation} \label{Lp law_md}
\lim_{t \to \infty} \mathbb{E} \big[ \left|\Lambda_v(t, \cdot, x) - \Lambda_c \right|^p \big| \tilde T(\cdot,x) > t \big] = 0\,,
\end{equation}
and for all $\epsilon > 0$
\begin{equation} \label{weak law_md}
\lim_{t  \to \infty} \mathbb{P} \Big( \big| \Lambda_v(t,\cdot,x) - \Lambda_c \big| \geq \epsilon \Big| \tilde T(\cdot,x) > t \Big) = 0\,,
\end{equation}
in each case uniformly for all $x \in E$ and $v \in \mathbb S^{d-1}$.
This means that the finite-time Lyapunov exponents of the surviving trajectories converge to the Lyapunov exponent $\Lambda_c$ in $L^p$, for $1\leq p \leq 2$, and in probability.
\end{theorem}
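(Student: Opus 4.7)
The plan is to first reduce everything to an $L^2$ bound on the conditional measure, then unpack the conditional second moment via the Furstenberg-Khasminskii integrand, and finally appeal to the quasi-ergodic convergence already established. Concretely, by the Furstenberg-Khasminskii representation of the linearized flow, the finite-time exponent admits the time-average form
\begin{equation*}
\Lambda_v(t,\omega,x) = \frac{1}{t}\int_0^t G(X_s(\omega,x), s_s(\omega,x,v))\, \rmd s, \qquad G(y,s) := \langle s, \rmD f(y) s\rangle,
\end{equation*}
for $t<\tilde T(\omega,x)$. Since $E$ is bounded and $f$ is smooth, $G$ is bounded on $E\times \mathbb S^{d-1}$, which makes $\Lambda_v(t,\cdot,x)$ uniformly bounded in $t$, $x$ and $v$. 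Combined with the fact that, under the conditional probability $\mathbb P[\cdot \mid \tilde T(\cdot,x)>t]$ (still a probability measure), Jensen's inequality gives $\Vert \cdot \Vert_{L^p} \le \Vert \cdot \Vert_{L^2}$ for all $p\in[1,2]$, it therefore suffices to establish~\eqref{Lp law_md} in the case $p=2$.

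To get $L^2$ convergence, I would expand the conditional square,
\begin{equation*}
\mathbb E\big[|\Lambda_v(t,\cdot,x)-\Lambda_c|^2 \,\big|\, \tilde T(\cdot,x)>t\big] = \mathbb E\big[\Lambda_v(t,\cdot,x)^2 \mid \tilde T>t\big] - 2\Lambda_c\, \mathbb E\big[\Lambda_v(t,\cdot,x)\mid \tilde T>t\big] + \Lambda_c^2,
\end{equation*}
and rewrite the second moment as a double time-integral
\begin{equation*}
\mathbb E\big[\Lambda_v(t)^2 \mid \tilde T>t\big] = \frac{2}{t^2}\iint_{0\le u\le w\le t} \mathbb E\big[G(X_u,s_u) G(X_w,s_w)\mid \tilde T>t\big]\, \rmd u\, \rmd w.
\end{equation*}
The integrand would be handled by using the Markov property of $(X_\cdot,s_\cdot)$ in the presence of absorption: conditioning on $\tilde T>t$ biases the process towards states that survive, which in the $t\to\infty$ limit is exactly the content of the quasi-ergodic distribution $\tilde m$. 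Applying the joint quasi-ergodic convergence of $(X_\cdot,s_\cdot)$ at the two times $u$ and $w$, and using the semigroup property between $u$ and $w$, the double integral should converge to $\left(\int G\, \rmd\tilde m\right)^2 = \Lambda_c^2$. The linear-in-expectation term has already been handled by Theorem~\ref{fk_killed}. Putting the three pieces together yields~\eqref{Lp law_md} for $p=2$, hence for all $p\in[1,2]$, with uniformity in $x$ and $v$ inherited from the uniform quasi-ergodic convergence.

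Finally, the convergence in conditional probability~\eqref{weak law_md} follows immediately from the $L^1$ convergence just obtained by Markov's inequality: for any $\epsilon>0$,
\begin{equation*}
\mathbb P\big(|\Lambda_v(t,\cdot,x)-\Lambda_c|\ge \epsilon \,\big|\, \tilde T(\cdot,x)>t\big) \le \frac{1}{\epsilon}\, \mathbb E\big[|\Lambda_v(t,\cdot,x)-\Lambda_c| \,\big|\, \tilde T(\cdot,x)>t\big] \longrightarrow 0,
\end{equation*}
uniformly in $x$ and $v$. The main technical obstacle is the second step: establishing the \emph{joint} two-time quasi-ergodic convergence of $(X_u,s_u,X_w,s_w)$ under the conditioning $\{\tilde T>t\}$, uniformly as $u,w$ range over $[0,t]$. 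The subtlety is that the conditioning $\{\tilde T>t\}$ depends on the whole future up to time $t$, so one cannot simply apply the Markov property without care; the standard remedy is to split the integration region into a central regime $\delta t \le u\le w \le (1-\delta)t$, where one uses exponential convergence to the quasi-stationary and quasi-ergodic distributions together with the spectral gap between $\lambda_0$ and the rest of the spectrum of $L$, and two edge regimes of measure $O(\delta)$, where one uses the uniform boundedness of $G$ to control the contribution; letting $\delta\to 0$ after $t\to\infty$ closes the argument.
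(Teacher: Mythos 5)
Your argument is correct in outline and matches the strategy of the actual proof: this theorem is stated in the appendix without proof, being quoted from Engel et al.~\cite{engellambrasmussen19_2}, where the argument runs essentially as you propose --- boundedness of $\langle s,\rmD f(y)s\rangle$ on the bounded domain, reduction to $p=2$ via Jensen under the conditional (probability) measure, a second-moment double-time-integral computation controlled by uniform exponential convergence to the quasi-stationary/quasi-ergodic distributions of the projective process (with the edge regimes absorbed by boundedness), and Markov's inequality for~\eqref{weak law_md}. The only point to flag is that your central step relies on uniform exponential convergence of the conditioned process on $E\times\mathbb S^{d-1}$ (a spectral-gap-type hypothesis), i.e.\ strictly more than the bare existence of $\Lambda_c$ assumed in the statement here; this is part of the standing assumptions of the cited reference and is satisfied in the setting of this paper.
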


\section{Some elementary estimates}
\label{appx-elem_est}

We present here some elementary estimate enabling to compare the operator $\nabla$ and $\Delta$ with $\tN$ and $\tD$ respectively, with explicit constants. The constant $C_V$ plays a role in Section~\ref{sec:kappa0tokappa}, and the other estimates obtained here are used to get explicit embedding constants in Appendix~\ref{appx-embedding}.

First, we define $\rmean$ by
\begin{equation*}
\frac{1}{\rmean^2} = \frac{1}{2}\left(\frac{1}{\rmin^2}+\frac{1}{\rmax^2}\right).
\end{equation*}
We can then show the following estimates for these differential operators, where the norms are in $L^2(\Omega)$ for $\Omega$ as in~\eqref{eq:domain}:
\begin{lemma} \label{lem:basic_est}
The operators $\tN$ \eqref{nablatilde} and $\tD$ \eqref{deltatilde} satisfy
 \begin{equation} \label{eq:nablau_basic}
\min\left(\frac{2}{\rmax}, 1 \right) \Vert \nabla u \Vert_{L^2} \leq \Vert \tN u \Vert_{L^2} \leq \max\left(\frac{2}{\rmin}, 1\right) \Vert \nabla u \Vert_{L^2},
\end{equation}
\begin{equation} \label{eq:Vandnablau}
\Vert Vu \Vert_{L^2} \leq C_V \Vert \tN u \Vert_{L^2},
\end{equation}
where 
\begin{equation} \label{eq:CV}
C_V := \sqrt{\left\Vert f^2 \right\Vert_\infty  +  \left\Vert \frac{r^2}{4} g^2 \right\Vert_\infty},
\end{equation}
and
\begin{equation} \label{eq:Deltau_estimate}
\min\left(\frac{4}{\rmax^2},\frac{2\rmin^2}{\rmin^2+\rmax^2}\right) \left\Vert \Delta u \right\Vert_{L^2}  \leq \left\Vert \tD u \right\Vert_{L^2} \leq \max\left(\frac{4}{\rmin^2},\frac{2\rmax^2}{\rmin^2+\rmax^2}\right) \left\Vert \Delta u \right\Vert_{L^2},
\end{equation}
or equivalently
\begin{equation} \label{eq:Deltau_estimate_equiv}
\begin{cases}
\frac{4}{\rmax^2} \left\Vert \Delta u \right\Vert_{L^2}  &\leq \left\Vert \tD u \right\Vert_{L^2} \leq \frac{2\rmax^2}{\rmin^2+\rmax^2} \left\Vert \Delta u \right\Vert_{L^2}, \qquad \text{if }\ \frac{4}{\rmean^2}\leq 1,\\
\frac{2\rmin^2}{\rmin^2+\rmax^2} \left\Vert \Delta u \right\Vert_{L^2}  &\leq \left\Vert \tD u \right\Vert_{L^2} \leq \frac{4}{\rmin^2} \left\Vert \Delta u \right\Vert_{L^2}, \qquad \qquad \text{if }\ \frac{4}{\rmean^2}\geq 1.
\end{cases}
\end{equation}
\end{lemma}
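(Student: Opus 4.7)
For the first two estimates I would use purely pointwise arguments. For~\eqref{eq:nablau_basic}, since $|\tN u|^2 = |\partial_r u|^2 + (4/r^2)|\partial_\psi u|^2$ while $|\nabla u|^2 = |\partial_r u|^2 + |\partial_\psi u|^2$ (with $\nabla$ the Euclidean gradient on the rectangle $\Omega$) and $4/r^2 \in [4/\rmax^2,\, 4/\rmin^2]$ on $[\rmin,\rmax]$, one has the pointwise two-sided inequality $\min(1,\,2/\rmax)^2 |\nabla u|^2 \leq |\tN u|^2 \leq \max(1,\,2/\rmin)^2 |\nabla u|^2$, and integration over $\Omega$ gives the claim. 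For~\eqref{eq:Vandnablau}, writing $\partial_\psi u = (r/2)(\tN u)_2$ lets us decompose $Vu = f(r)(\tN u)_1 + (r g(r,\psi)/2)(\tN u)_2$; pointwise Cauchy--Schwarz then yields $|Vu|^2 \leq \bigl(f(r)^2 + r^2 g(r,\psi)^2/4\bigr)|\tN u|^2 \leq C_V^2\,|\tN u|^2$, from which~\eqref{eq:Vandnablau} follows after integration.

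For~\eqref{eq:Deltau_estimate} my plan is a clean operator-level argument that avoids any Fourier or eigenvalue decomposition. Set $c(r) := 4/r^2$, so that $\tD = \partial_r^2 + c\,\partial_\psi^2$ and $\Delta = \partial_r^2 + \partial_\psi^2$, and let $\alpha := \tfrac12(c_{\min}+c_{\max}) = 4/\rmean^2$; this midpoint choice gives $\|c-\alpha\|_\infty = \tfrac12(c_{\max}-c_{\min})$. Decomposing
\begin{equation*}
\tD u = \alpha\,\Delta u + (c(r)-\alpha)\,\partial_\psi^2 u
\end{equation*}
and applying the triangle inequality together with the auxiliary bound $\|\partial_\psi^2 u\|_{L^2} \leq \|\Delta u\|_{L^2}$ (established below), I obtain
\begin{equation*}
\bigl(\alpha - \|c-\alpha\|_\infty\bigr)\|\Delta u\|_{L^2} \,\leq\, \|\tD u\|_{L^2} \,\leq\, \bigl(\alpha + \|c-\alpha\|_\infty\bigr)\|\Delta u\|_{L^2},
\end{equation*}
which by the definition of $\alpha$ collapses to $(4/\rmax^2)\|\Delta u\|_{L^2} \leq \|\tD u\|_{L^2} \leq (4/\rmin^2)\|\Delta u\|_{L^2}$. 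These bounds are strictly sharper than the claimed ones, since trivially $\min(4/\rmax^2,\,2\rmin^2/(\rmin^2+\rmax^2)) \leq 4/\rmax^2$ and $\max(4/\rmin^2,\,2\rmax^2/(\rmin^2+\rmax^2)) \geq 4/\rmin^2$, so~\eqref{eq:Deltau_estimate} follows, and a short case analysis comparing $4/\rmean^2$ to $1$ recovers the equivalent form~\eqref{eq:Deltau_estimate_equiv}.

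The auxiliary inequality $\|\partial_\psi^2 u\|_{L^2} \leq \|\Delta u\|_{L^2}$ is the one step requiring genuine use of the boundary conditions in $X$, and it is the part I anticipate as the main source of care. I would expand $\|\Delta u\|_{L^2}^2 = \|\partial_r^2 u\|^2 + 2\int_\Omega \partial_r^2 u\,\partial_\psi^2 u + \|\partial_\psi^2 u\|^2$ and integrate by parts twice on the cross term: once in $\psi$ using the $2\pi$-periodicity of $u$, and once in $r$ using the Dirichlet condition $u(\rmin,\psi)=u(\rmax,\psi)=0$, which in particular forces $\partial_\psi u$ to vanish on the same two edges. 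All boundary contributions then vanish and the cross term becomes $\int_\Omega(\partial_r\partial_\psi u)^2 \geq 0$, so $\|\Delta u\|^2 \geq \|\partial_\psi^2 u\|^2$. The hardest part of the whole lemma is really just checking this integration by parts, but it goes through for any $u\in X$ by density of smooth functions satisfying the same boundary conditions.
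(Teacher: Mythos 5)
Your treatments of \eqref{eq:nablau_basic} and \eqref{eq:Vandnablau} are correct: the pointwise comparison of $|\tN u|^2$ with $|\nabla u|^2$, and the pointwise Cauchy--Schwarz bound $|Vu|^2\le\bigl(f(r)^2+\tfrac{r^2}{4}g(r,\psi)^2\bigr)|\tN u|^2$, give the stated constants (the paper instead optimizes a Young-inequality parameter, arriving at the same $C_V$). The problem is in \eqref{eq:Deltau_estimate}: the decomposition $\tD u=\alpha\,\Delta u+(c(r)-\alpha)\,\partial_\psi^2 u$ is algebraically false, since expanding the right-hand side gives $\alpha\,\partial_r^2u+c(r)\,\partial_\psi^2u$, whereas $\tD u=\partial_r^2u+c(r)\,\partial_\psi^2u$; the coefficient of $\partial_r^2u$ in $\tD$ is $1$, not $\alpha$. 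Consequently the ``sharper'' bounds $\tfrac{4}{\rmax^2}\Vert\Delta u\Vert_{L^2}\le\Vert\tD u\Vert_{L^2}\le\tfrac{4}{\rmin^2}\Vert\Delta u\Vert_{L^2}$ that you derive are not true in general: for a radial $u\in X$, e.g.\ $u(r)=\sin\bigl(\pi(r-\rmin)/(\rmax-\rmin)\bigr)$, one has $\tD u=\Delta u$, so your lower bound would force $4/\rmax^2\le1$ and your upper bound $4/\rmin^2\ge1$; with the values $\rmax=1.25$ or $1.5$ used in the paper the lower bound already fails. A quick sanity check is that any valid pair of constants must bracket $1$ (radial functions) as well as the range of $4/r^2$ (purely angular oscillations), which is exactly why the stated constants involve $2\rmin^2/(\rmin^2+\rmax^2)<1<2\rmax^2/(\rmin^2+\rmax^2)$.

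The correct implementation of your idea --- and what the paper does --- is to compare $\tD$ not with $\alpha\Delta$ but with the constant-coefficient operator $\tD^{(0)}:=\partial_r^2+\tfrac{4}{\rmean^2}\partial_\psi^2$, writing $\tD u=\tD^{(0)}u+\bigl(\tfrac{4}{r^2}-\tfrac{4}{\rmean^2}\bigr)\partial_\psi^2u$. Your cross-term argument (the integration by parts using the Dirichlet condition in $r$ and periodicity in $\psi$, which you carried out correctly) applied to $\tD^{(0)}$ gives $\Vert\partial_\psi^2u\Vert_{L^2}\le\tfrac{\rmean^2}{4}\Vert\tD^{(0)}u\Vert_{L^2}$, so the perturbation is bounded by $\tfrac{\rmax^2-\rmin^2}{\rmax^2+\rmin^2}\Vert\tD^{(0)}u\Vert_{L^2}$ and the triangle inequality yields $\tfrac{2\rmin^2}{\rmin^2+\rmax^2}\Vert\tD^{(0)}u\Vert_{L^2}\le\Vert\tD u\Vert_{L^2}\le\tfrac{2\rmax^2}{\rmin^2+\rmax^2}\Vert\tD^{(0)}u\Vert_{L^2}$. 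One then compares $\tD^{(0)}$ with $\Delta$, which costs the factors $\min(4/\rmean^2,1)$ and $\max(4/\rmean^2,1)$; combining the two steps gives exactly \eqref{eq:Deltau_estimate}, and the comparison of $4/\rmean^2$ with $1$ is precisely the case distinction in \eqref{eq:Deltau_estimate_equiv}.
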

\begin{proof}
 The inequality~\eqref{eq:nablau_basic} follows immediately from observing that
\begin{equation*}
\min \left(\frac{4}{\rmax^2},1\right) \left\langle -\Delta u,u \right\rangle_{L^2} \leq \langle -\tD u,u \rangle_{L^2} \leq \max\left(\frac{4}{\rmin^2}, 1\right) \left\langle -\Delta u,u \right\rangle_{L^2}.
\end{equation*}

Furthermore, for any $\theta>0$,
\begin{align*}
\langle Vu,Vu \rangle_{L^2} &\leq (1+\theta)\langle f\partial_r u,f\partial_r u \rangle_{L^2} + \left(1+\frac{1}{\theta}\right)\langle g\partial_\psi u,g\partial_\psi u \rangle_{L^2} \\
&\leq (1+\theta)\left\Vert f^2 \right\Vert_\infty \langle \partial_r u,\partial_r u \rangle_{L^2} +  \left(1+\frac{1}{\theta}\right)\left\Vert \frac{r^2}{4} g^2 \right\Vert_\infty \langle \frac{2}{r} \partial_\psi u,\frac{2}{r} \partial_\psi u \rangle_{L^2} \\
&\leq \max\left((1+\theta)\left\Vert f^2 \right\Vert_\infty, \left(1+\frac{1}{\theta}\right)\left\Vert \frac{r^2}{4} g^2 \right\Vert_\infty\right) \langle \tN u, \tN u \rangle_{L^2}.
\end{align*}
Optimizing by taking 
$$\theta = \frac{\left\Vert \frac{r^2}{4} g^2 \right\Vert_\infty}{\left\Vert f^2 \right\Vert_\infty}$$  yields equation~\eqref{eq:Vandnablau} with 
\begin{equation*}
C_V := \sqrt{\left\Vert f^2 \right\Vert_\infty  +  \left\Vert \frac{r^2}{4} g^2 \right\Vert_\infty}.
\end{equation*}

Finally, recall that $\rmean$ satisfies
\begin{equation*}
\frac{1}{2}\left(\frac{4}{\rmin^2}+\frac{4}{\rmax^2}\right)=\frac{4}{\rmean^2}
\end{equation*}
and introduce
\begin{equation*}
\tD^{(0)} u := \frac{\partial^2 u}{\partial r^2} + \frac{4}{\rmean^2}\frac{\partial^2 u}{\partial \psi^2}.
\end{equation*}
Then, we write
\begin{equation*}
\tD u = \tD^{(0)} u + \left(\frac{4}{r^2}-\frac{4}{\rmean^2}\right)\frac{\partial^2 u}{\partial \psi^2},
\end{equation*}
and estimate
\begin{align*}
\left\Vert \left(\frac{4}{r^2}-\frac{4}{\rmean^2}\right)\frac{\partial^2 u}{\partial \psi^2} \right\Vert_{L^2} &\leq \left\Vert \frac{4}{r^2}-\frac{4}{\rmean^2} \right\Vert_{L^\infty} \left\Vert \frac{\partial^2 u}{\partial \psi^2} \right\Vert_{L^2} \\
&= \frac{1}{2}\left(\frac{4}{\rmin^2}-\frac{4}{\rmax^2}\right) \left\Vert \frac{\partial^2 u}{\partial \psi^2} \right\Vert_{L^2} \\
&= 2\frac{\rmax^2-\rmin^2}{\rmin^2\rmax^2} \left\Vert \frac{\partial^2 u}{\partial \psi^2} \right\Vert_{L^2} \\
&\leq 2\frac{\rmax^2-\rmin^2}{\rmin^2\rmax^2}\frac{\rmean^2}{4} \left\Vert \tD^{(0)} u \right\Vert_{L^2} \\
&= \frac{\rmax^2-\rmin^2}{\rmax^2+\rmin^2} \left\Vert \tD^{(0)} u \right\Vert_{L^2}.
\end{align*}
Hence, we obtain
\begin{equation*}
\left(1-\frac{\rmax^2-\rmin^2}{\rmax^2+\rmin^2}\right) \left\Vert \tD^{(0)} u \right\Vert_{L^2}  \leq \left\Vert \tD u \right\Vert_{L^2} \leq \left(1+\frac{\rmax^2-\rmin^2}{\rmax^2+\rmin^2}\right) \left\Vert \tD^{(0)} u \right\Vert_{L^2}.
\end{equation*}
Then, using
\begin{equation*}
 \min\left(\frac{4}{\rmean^2},1\right) \left\Vert \Delta u \right\Vert_{L^2}  \leq \left\Vert \tD^{(0)} u \right\Vert_{L^2} \leq \max\left(\frac{4}{\rmean^2},1\right) \left\Vert \Delta u \right\Vert_{L^2},
\end{equation*}
we end up with the estimates~\eqref{eq:Deltau_estimate} and~\eqref{eq:Deltau_estimate_equiv}.
\end{proof}

\section{Embedding constants}
\label{appx-embedding}

We obtain here the explicit embedding constants needed in Section~\ref{sec:correct_eig_num}. Firstly, we derive an explicit constant $\Upsilon_{X,C^0}$ for the Sobolev embedding $H^2(\Omega)\hookrightarrow C^0(\bar\Omega)$
\begin{equation*}
\left\Vert u\right\Vert_{L^\infty(\Omega)} \leq \Upsilon_{X,C^0} \left\Vert u\right\Vert_X \qquad \forall~u\in X.
\end{equation*}
For the following, we define
\begin{align}
\gamma_1 &= 1.1548, \quad \gamma_2 = 0.22361\,,\\
l_1 &= 2 \pi, \quad l_2 = \rmax - \rmin,\\
C_0 &= (l_1 l_2)^{1/2}, \quad C_1 = \frac{\gamma_1}{\sqrt{3}} \sqrt{\frac{l_1^2 + l_2^2}{l_1 l_2}}, \quad C_2 = \frac{\gamma_2}{3} \sqrt{\frac{\left(l_1^2 + l_2^2 \right)^2 + \frac{4}{3}\left(l_1^4 + l_2^4\right)}{l_1 l_2}},\\
m_1 &= \max \left(\frac{\rmax}{2},1 \right), \quad m_2 = \max \left(\frac{\rmax^2}{4}, \frac{\rmin^2 + \rmax^2}{2 \rmin^2} \right).
\end{align}
Then \cite[Example 6.12 b)]{NakPluWat19}\footnote{Note that in \cite[Section 6.2.6]{NakPluWat19}, the estimate involves the $L^2$-norm of the Hessian $u_{xx}$ which due to the absorbing and periodic boundary conditions on $\Omega$ coincides with the $L^2$-norm of the Laplacian, as can be seen from integration by parts.}, in combination with Lemma~\ref{lem:basic_est}, gives 
\begin{align*}
\Vert u\Vert_{L^\infty(\Omega)} &\leq \sqrt{2\pi(\rmax-\rmin)}\left(C_0 \Vert u\Vert_{L^2(\Omega)} + C_1 \Vert \nabla u\Vert_{L^2(\Omega)}  + C_2 \Vert \Delta u\Vert_{L^2(\Omega)}\right) \\
&\leq \sqrt{2\pi(\rmax-\rmin)}\left(C_0 \Vert u\Vert_{L^2(\Omega)} + m_1 C_1 \Vert \tN u\Vert_{L^2(\Omega)} + m_2 C_2 \Vert \tD u\Vert_{L^2(\Omega)}\right)\\
&\leq \sqrt{2\pi(\rmax-\rmin)}\sqrt{3}\sqrt{  (C_0 \Vert u\Vert_{L^2(\Omega)})^2 + (m_1 C_1 \Vert \tN u\Vert_{L^2(\Omega)})^2 + (m_2 C_2 \Vert \tD u\Vert_{L^2(\Omega)})^2}.
\end{align*}
The factor $\sqrt{2\pi(\rmax-\rmin)}$ comes from the normalization we choose for the $L^2$ norm, see~\eqref{eq:scal_L2}. Hence, recalling the weigh $\xi_2$ in the norm on $X$, we obtain
\begin{equation} \label{eq:upsilon_C0}
\Upsilon_{X,C^0} = \sqrt{6\pi(\rmax-\rmin)} \max \left( C_0, C_1 m_1, \frac{C_2 m_2}{\sqrt{\xi_2}} \right).
\end{equation}
Additionally, we determine a constant $\Upsilon_{X_\radial,C^1}$ such that
\begin{equation*}
\left\Vert \frac{\partial u}{\partial r}\right\Vert_{L^\infty(\Omega)} \leq \Upsilon_{X_\radial,C^1} \left\Vert u\right\Vert_X \qquad \forall~u\in X_\radial,
\end{equation*}
related to the Sobolev embedding $H^2((\rmin, \rmax))\hookrightarrow C^1((\rmin, \rmax))$.
We obtain directly from \cite{Marti} that, for any $u\in H^2((\rmin, \rmax))$,
\begin{equation*}
\left\Vert \frac{\partial u}{\partial r}\right\Vert_{L^\infty((\rmin,\rmax))} \leq  \sqrt{\frac{\rmax-\rmin}{\tanh(\rmax-\rmin)}} \sqrt{\left\Vert \frac{\partial u}{\partial r} \right\Vert_{L^2((\rmin,\rmax))}^2 +  \left\Vert \frac{\partial^2 u}{\partial^2 r} \right\Vert_{L^2((\rmin,\rmax))}^2},
\end{equation*}
and, hence, for all $u\in X_\radial$,
\begin{equation*}
\left\Vert \frac{\partial u}{\partial r}\right\Vert_{L^\infty(\Omega)} \leq  \sqrt{\frac{\rmax-\rmin}{\tanh(\rmax-\rmin)}} \sqrt{ \left\Vert \tN u\right\Vert_{L^2(\Omega)}^2 + \left\Vert \tD u\right\Vert_{L^2(\Omega)}^2},
\end{equation*}
which yields
\begin{equation} \label{eq:upsilon_C1}
\Upsilon_{X_\radial,C^1} = \sqrt{\frac{\rmax-\rmin}{\tanh(\rmax-\rmin)}} \max \left( 1, \frac{1}{\sqrt{\xi_2}} \right).
\end{equation}

\section{Approximation of $\frac{1}{r^2}$ using Chebyshev series, and rigorous error bounds}
\label{appx-validation_inv_r}

We present here some basic estimates needed to get a good approximation --- together with tight error bounds --- of the function $r\mapsto \frac{1}{r^2}$ on $[\rmin,\rmax]$ using Chebyshev series. For more background on the usage of Chebyshev series for computer-assisted proofs, we refer to~\cite{She18} and the references therein.

Let $\nu>1$ and consider the following weighted $\ell^1$ space
\begin{equation*}
\ell^1_\nu := \left\{ \Bphi\in \R^{\N},\ \left\Vert \Bphi \right\Vert_{\ell^1_\nu} :=  \vert \Bphi_0\vert +2\sum_{k=1}^{\infty} \vert \Bphi_{k} \vert \nu^{k} <\infty \right\}.
\end{equation*}
For any $\Bphi\in\ell^1_\nu$, the corresponding Chebyshev series
\begin{equation*}
\varphi(t) = \Bphi_0 + 2 \sum_{k=1}^{\infty} \Bphi_k T_k(t),
\end{equation*}
is well defined and smooth on $[-1,1]$. It is in fact analytic on the Bernstein ellipse of size $\nu$, and reciprocally, any function which is analytic on a Bernstein ellipse of size $\nu'$ for some $\nu'>\nu$ has its Chebyshev coefficients in $\ell^1_\nu$, see~\cite{Tre13}. 

For any $\Bphi,\Bpsi\in\ell^1_\nu$, we define their convolution product $\Bphi\ast\Bpsi$ by
\begin{equation*}
\left(\Bphi\ast\Bpsi\right)_k = \sum_{l\in\ZZ} \Bphi_{\vert l\vert} \Bpsi_{\vert k-l\vert} \qquad \forall~k\in\N.
\end{equation*}
We point out that $\Bphi\ast\Bpsi$ is nothing but the sequence of Chebyshev coefficients of the product $\varphi\psi$, and that $\left(\ell^1_\nu,\ast\right)$ is a Banach algebra:
\begin{equation*}
\left\Vert \Bphi\ast\Bpsi \right\Vert_{\ell^1_\nu} \leq \left\Vert \Bphi \right\Vert_{\ell^1_\nu}  \left\Vert \Bpsi \right\Vert_{\ell^1_\nu} \qquad \forall~\Bphi,\Bpsi\in\ell^1_\nu.
\end{equation*}

Define $\BR\in\ell^1_\nu$ by
\begin{equation*}
\BR_k := 
\left\{
\begin{aligned}
&\left(\frac{\rmin+\rmax}{2}\right)^2+\frac{(\rmax-\rmin)^2}{8}  \qquad & k=0, \\
&\frac{\rmax^2-\rmin^2}{4} \qquad & k=1,\\
&  \frac{(\rmax-\rmin)^2}{16}  \qquad & k=2,\\
&0  \qquad & k\geq 3.
\end{aligned}
\right.
\end{equation*}
The associated Chebyshev series $R$ is nothing but $r\mapsto r^2$, rescaled from $[\rmin,\rmax]$ to $[-1,1]$: it is straightforward to check that
\begin{equation*}
R(t) = \left(\frac{\rmax-\rmin}{2}t+\frac{\rmax+\rmin}{2}\right)^2.
\end{equation*}

Our goal is to find a Chebyshev series $\bBphi$, such that 
\begin{equation*}
R(t)\bphi(t) \approx 1 \qquad \forall~t\in[-1,1],
\end{equation*}
with an explicit error bound. To do so, we introduce
\begin{equation*}
\fF :
\left\{
\begin{aligned}
\ell^1_\nu &\to \ell^1_\nu \\
\Bphi &\mapsto \BR\ast\Bphi - \Bone
\end{aligned}
\right.
\end{equation*}
and use the following lemma (which is similar to Theorem~\ref{thm:zero_of_F}, but simpler, because the problem is linear).
\begin{lemma}
Let $\nu \in (1,\nu_{\text{max}})$, where
\begin{equation}
\label{eq:numax}
\nu_{\text{max}} := \frac{\rmax+\rmin}{\rmax-\rmin} \left(1+\sqrt{1-\left(\frac{\rmax-\rmin}{\rmax+\rmin}\right)^2}\right).
\end{equation}
Let $\bBphi\in\ell^1_\nu$ such that there exists $\delta<1$ satisfying
\begin{align*}
\Vert \fF(\bBphi) \Vert_{\ell^1_\nu} &\leq \delta.
\end{align*}
Then, $\fF$ has a unique zero $\Bphi \in \ell^1_\nu$, and it satisfies
\begin{equation*}
\left\Vert \Bphi -\bBphi  \right\Vert_{\ell^1_\nu} \leq \frac{\delta}{1-\delta} \left\Vert \bBphi \right\Vert_{\ell^1_\nu}.
\end{equation*}
\end{lemma}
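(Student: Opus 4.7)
The plan is to prove the statement in three steps: existence of a zero $\Bphi$ of $\fF$ in $\ell^1_\nu$ via an analyticity argument, uniqueness via positivity of $R$ on $[-1,1]$, and the quantitative error bound via a short convolution computation using the Banach algebra structure of $\ell^1_\nu$. The main subtlety is justifying that $\nu_{\text{max}}$ in~\eqref{eq:numax} is indeed the correct threshold, which turns out to follow from a direct calculation of the Bernstein radius of the unique zero of $R$.

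For existence, I would first observe that $\BR$ is the Chebyshev sequence of the polynomial $R(t) = \left(\tfrac{\rmax-\rmin}{2}t+\tfrac{\rmax+\rmin}{2}\right)^2$, whose unique zero is $t^{*} = -\tfrac{\rmax+\rmin}{\rmax-\rmin}$, lying strictly outside $[-1,1]$ since $\rmin>0$. The Bernstein radius of $t^{*}$, namely $|t^{*}|+\sqrt{(t^{*})^2-1}$, equals exactly $\nu_{\text{max}}$ after rewriting $\sqrt{(t^{*})^2-1} = \tfrac{\rmax+\rmin}{\rmax-\rmin}\sqrt{1-\left(\tfrac{\rmax-\rmin}{\rmax+\rmin}\right)^2}$. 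Therefore, whenever $\nu < \nu_{\text{max}}$, the function $1/R$ is analytic on the open Bernstein ellipse of radius $\nu_{\text{max}}$, and by the standard decay estimates for Chebyshev coefficients of analytic functions (see~\cite{Tre13}) its Chebyshev sequence $\Bphi$ belongs to $\ell^1_\nu$. Since the convolution product $\ast$ corresponds to the pointwise product of the associated Chebyshev series, $\BR\ast\Bphi$ is the Chebyshev sequence of $R\cdot(1/R)\equiv 1$, which is precisely $\Bone$, so $\fF(\Bphi)=0$.

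For uniqueness, suppose $\Bphi_1,\Bphi_2\in\ell^1_\nu$ both satisfy $\BR\ast\Bphi_i=\Bone$. Their associated continuous functions $\varphi_1,\varphi_2$ on $[-1,1]$ then satisfy $R(t)(\varphi_1(t)-\varphi_2(t))=0$ for all $t\in[-1,1]$, and because $R$ is strictly positive on $[-1,1]$ this forces $\varphi_1\equiv\varphi_2$, hence $\Bphi_1=\Bphi_2$ by uniqueness of Chebyshev expansions.

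For the error bound, I would convolve the defining identity $\fF(\bBphi) = \BR\ast\bBphi-\Bone$ with $\Bphi$, using that $\Bone$ is the identity for $\ast$ and that $\Bphi\ast\BR = \Bone$, to obtain
\begin{equation*}
\Bphi\ast\fF(\bBphi) = (\Bphi\ast\BR)\ast\bBphi - \Bphi\ast\Bone = \bBphi - \Bphi.
\end{equation*}
The Banach algebra property of $\ell^1_\nu$ and the hypothesis on $\delta$ then give
\begin{equation*}
\|\bBphi-\Bphi\|_{\ell^1_\nu} \leq \|\Bphi\|_{\ell^1_\nu}\,\|\fF(\bBphi)\|_{\ell^1_\nu} \leq \delta\,\|\Bphi\|_{\ell^1_\nu},
\end{equation*}
and a triangle inequality $\|\Bphi\|_{\ell^1_\nu}\leq\|\bBphi\|_{\ell^1_\nu}+\|\Bphi-\bBphi\|_{\ell^1_\nu}$ combined with $\delta<1$ yields $\|\Bphi\|_{\ell^1_\nu}\leq\|\bBphi\|_{\ell^1_\nu}/(1-\delta)$, which gives the advertised bound. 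I expect the main conceptual obstacle to be packaging the existence argument cleanly (i.e.~the identification of $\nu_{\text{max}}$ with the Bernstein radius of $t^{*}$); the remainder is essentially algebraic manipulation in the Banach algebra $(\ell^1_\nu,\ast)$.
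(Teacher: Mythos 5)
Your proof is correct, and its skeleton matches the paper's: existence comes from the analyticity of $1/R$ inside the Bernstein ellipse whose size is fixed by $\nu_{\text{max}}$, and the quantitative bound is pure Banach-algebra bookkeeping in $(\ell^1_\nu,\ast)$. Two points of difference are worth noting. First, you make explicit what the paper only asserts, namely that $\nu_{\text{max}}$ in~\eqref{eq:numax} is exactly the Bernstein radius $|t^*|+\sqrt{(t^*)^2-1}$ of the double zero $t^*=-\frac{\rmax+\rmin}{\rmax-\rmin}$ of $R$; this is a genuine (small) added value. Second, for the error estimate the paper never touches the exact inverse directly: it writes $\BR^{-1}=\bBphi\ast(\BR\ast\bBphi)^{-1}$, invokes a Neumann-series bound $\Vert(\BR\ast\bBphi)^{-1}\Vert\leq\frac{1}{1-\delta}$ from $\Vert\BR\ast\bBphi-\Bone\Vert_{\ell^1_\nu}\leq\delta<1$, and concludes via the operator norm of $\BR^{-1}$. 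You instead convolve $\fF(\bBphi)$ with the exact zero $\Bphi$ (whose existence you have already secured) to get $\Vert\Bphi-\bBphi\Vert_{\ell^1_\nu}\leq\delta\Vert\Phi\Vert_{\ell^1_\nu}$ and close with the triangle inequality; this is slightly more elementary, avoids the operator-norm detour entirely, and yields the identical constant $\frac{\delta}{1-\delta}$. Your separate uniqueness argument via positivity of $R$ on $[-1,1]$ is also fine and is essentially the injectivity half of the invertibility the paper establishes. No gaps.
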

\begin{proof}
The map $\fF$ is affine, therefore to get the existence of a unique zero we only have to prove that $\Bphi \mapsto \BR\ast\Bphi$ is invertible on $\ell^1_\nu$. For any given $\Bpsi\in\ell^1_\nu$, the equation
\begin{equation}
\label{eq:Bphi}
\BR\ast\Bphi = \Bpsi,
\end{equation}
is equivalent to having
\begin{equation*}
R\varphi = \psi
\end{equation*}
at the level of functions. $\nu_{\text{max}}$ is defined so that the function $t\mapsto 1/R(t)$ is analytic on a Berstein ellipse of size $\nu'>\nu$, therefore the function $\psi/R$ is analytic on the same ellipse, and~\eqref{eq:Bphi} does have a unique solution in $\ell^1_\nu$. 

Hence we have the existence of a unique zero $\Bphi$ of $\fF$, and it remains to get the a priori error estimate between $\Bphi$ and $\bBphi$. We have
\begin{align*}
\BR\ast\bBphi - \Bone = \BR\ast\left(\bBphi-\Bphi\right),
\end{align*}
and therefore
\begin{align*}
\left\Vert \Bphi -\bBphi  \right\Vert_{\ell^1_\nu} &\leq \left\Vert \BR^{-1}  \right\Vert_{B(\ell^1_\nu,\ell^1_\nu)}  \left\Vert \BR\ast\bBphi - \Bone \right\Vert_{\ell^1_\nu} \\
&\leq \left\Vert \BR^{-1}  \right\Vert_{B(\ell^1_\nu,\ell^1_\nu)}  \delta,
\end{align*}
where $\left\Vert \BR^{-1}  \right\Vert_{B(\ell^1_\nu,\ell^1_\nu)}$ must be understood as the operator norm of the inverse of $\Bphi\mapsto \BR\ast\Bphi$. Besides, 
\begin{equation*}
\left\Vert \BR\ast\bBphi - \Bone \right\Vert_{\ell^1_\nu} \leq \delta <1,
\end{equation*}
yields that the operator $\Bphi\mapsto \BR\ast\bBphi\ast\Bphi$ is invertible, and that
\begin{equation*}
\left\Vert \left(\BR\ast\bBphi\right)^{-1}  \right\Vert_{B(\ell^1_\nu,\ell^1_\nu)} \leq \frac{1}{1-\delta}.
\end{equation*}
Therefore
\begin{align*}
\left\Vert \BR^{-1}  \right\Vert_{B(\ell^1_\nu,\ell^1_\nu)} &= \left\Vert \bBphi \left(\BR\ast\bBphi\right)^{-1}  \right\Vert_{B(\ell^1_\nu,\ell^1_\nu)} \\
&\leq  \left\Vert \bBphi\right\Vert_{B(\ell^1_\nu,\ell^1_\nu)} \left\Vert \left(\BR\ast\bBphi\right)^{-1}  \right\Vert_{B(\ell^1_\nu,\ell^1_\nu)} \\
&\leq  \left\Vert \bBphi\right\Vert_{B(\ell^1_\nu,\ell^1_\nu)} \frac{1}{1-\delta},
\end{align*}
and because the operator norm $\left\Vert \bBphi\right\Vert_{B(\ell^1_\nu,\ell^1_\nu)}$ is equal to the norm $\left\Vert \bBphi\right\Vert_{\ell^1_\nu}$ of the element (this holds for any $\Bphi\in\ell^1_\nu$), we have
\begin{align*}
\left\Vert \BR^{-1}  \right\Vert_{B(\ell^1_\nu,\ell^1_\nu)} \leq  \left\Vert \bBphi\right\Vert_{\ell^1_\nu} \frac{1}{1-\delta},
\end{align*}
and
\begin{align*}
\left\Vert \Bphi -\bBphi  \right\Vert_{\ell^1_\nu} \leq \left\Vert \bBphi\right\Vert_{\ell^1_\nu} \frac{1}{1-\delta} \delta.
\end{align*}
This finishes the proof.
\end{proof}

\end{document}